\newtheorem{thm}{Theorem}
\newtheorem{prop}[thm]{Proposition}
\newtheorem{cor}[thm]{Corollary}
\newtheorem{lem}[thm]{Lemma}
\newtheorem{rem}[thm]{Remark}
\newtheorem{hyp}{Hypothesis}
\newcommand{\be}{\begin{equation}}  \newcommand{\ee}{\end{equation}}
\newcommand{\bea}{\begin{eqnarray*}}  \newcommand{\eea}{\end{eqnarray*}}
\numberwithin{equation}{section}
\begin{document}

\newcommand{\C}{{\mathbb C}}

\newcommand\R{{\mathbb R}}
\newcommand\Z{{\mathbb Z}}
\newcommand\N{{\mathbb N}}

\newcommand\F{{\mathcal F}}
\newcommand\T{{\mathcal T}}
\newcommand\E{{\mathbb E}}

\renewcommand\L{{\Lambda}}
\newcommand\M{{\mathcal M}}
\renewcommand\P{{\mathcal P}}
\renewcommand\S{{\mathcal S}}
\newcommand\cR{{\mathcal R}}

\newcommand\D{{\mathcal D}}
\newcommand\Q{{\mathcal Q}}

\renewcommand\l{\lambda}
\renewcommand\O{\Omega}

\newcommand\el{\,\stackrel{law}{=}\,}

\newcommand\tr{\,\mbox{\rm tr}\,}

\newcommand\etr{\, \mbox{\rm etr}\,}

\title{Interacting diffusions on positive definite matrices }

\author{Neil O'Connell    
}

\address{
              School of Mathematics and Statistics, University College Dublin, Dublin 4, Ireland }

\maketitle

\begin{abstract}
We consider systems of Brownian particles in the space of positive definite matrices, 
which evolve independently apart from some simple interactions.  We give examples of such processes which 
have an integrable structure.  These are related to $K$-Bessel functions 
of matrix argument and multivariate generalisations of these functions.  
The latter are eigenfunctions of a particular quantisation of the non-Abelian Toda lattice.
\end{abstract}

\setcounter{tocdepth}{1}
%\tableofcontents

\section{Introduction}
\label{intro}

In recent years, there has been much progress in the development of integrable models
in probability, particularly interacting particle systems related to representation theory
and integrable systems.  A well-known example is the coupled system of SDE's
\be\label{sd}
dx_1=d\beta_1,\qquad dx_i=d\beta_i+e^{x_{i-1}-x_i} dt, \ i=2,\ldots,N,
\ee
where $\beta_i$ are independent one-dimensional Brownian motions.
This process is closely related to the Toda lattice and has been extensively 
studied~\cite{bc,bcf,is,noc12,oy1,sv,spohn}.

It is natural to consider non-commutative generalisations of such processes.
In this paper, we consider some interacting systems of Brownian particles in 
the space $\P$ of positive $n\times n$ real symmetric matrices.  
One of the main examples we consider is a generalisation of the system \eqref{sd}, 
a diffusion process in $\P^N$ with infinitesimal generator
\be\label{Tintro}
T=\Delta_{X_1}+\sum_{i=2}^N [\Delta_{X_i} + 2 \tr(X_{i-1}\partial_{X_i})],
\ee
where $\Delta_X$ denotes the Laplacian, and $\partial_X$ denotes the
partial matrix derivative, on $\P$.
In the case $n=1$ with $x_i=\ln X_i$, it is equivalent to the system \eqref{sd}.  
We will show that this process is
related to a quantisation of the non-Abelian 
Toda lattice in~$\P$.

In another direction, Matsumoto and Yor~\cite{my} obtained an analogue of Pitman's $2M-X$ 
theorem for exponential functions of Brownian motion, which is closely related to Dufresne's identity~\cite{duf}.
These results were recently extended to the matrix setting by Rider and Valk\'o~\cite{rv16} 
(see also Bougerol~\cite{bougerol} for related results in the complex case).
We discuss this example in some detail,
and give a new proof (and slight generalisation) of Rider and Valk\'o's result.  We also consider
an example of a pair of interacting Brownian particles in $\P$ with a `reflecting wall'.

The outline of the paper is as follows.  In the next section we present some preliminary
background material.  This is followed, in Sections 3--6, by a series of examples with small numbers of
particles.  In Section 7, we discuss the example \eqref{Tintro} and
its relation to a quantisation of the non-Abelian Toda lattice.  In Section 8, we outline how this
example is related to some B\"acklund transformations
for the classical non-Abelian Toda lattice.  In Section 9, we briefly discuss a related class
of processes in which the underlying motion of particles is not governed by the Laplacian,
but rather a related diffusion process which was introduced and studied 
by Norris, Rogers and Williams~\cite{nrw}.
In Section 10, we briefly outline how the framework developed in this paper 
extends to the complex setting, with particular emphasis on some extensions
of the (Hermitian) Matrix Dufresne identity of~\cite{rv16}.

\section{Preliminaries}

We mostly follow the nomenclature of Terras~\cite{terras},
to which we refer the reader for more background.
Let $\P$ denote the space of positive $n\times n$ real symmetric matrices.
For $a\in GL(n)$ and $X\in\P$, write $X[a]=a^t X a$.  This defines an action of $GL(n)$ on $\P$.

For $X\in\P$, we will use the notation $$|X|=\det(X),\qquad \etr(X)=\exp(\tr(X)),$$
and denote by $X^{1/2}$ the unique positive square root of $X$.

\subsection{Differential operators}

The partial  derivative on $\P$ is defined, writing $X=(x_{ij})$, by
$$\partial_X = \left(\frac12(1+\delta_{ij})\frac{\partial}{\partial x_{ij}} \right)_{1\le i,j \le n}.$$
We define the Laplacian on $\P$ by $\Delta_X = \tr \vartheta_X^2$,
where $\vartheta_X = X \partial_X$.  
The Laplacian is a $GL(n)$-invariant differential operator on $\P$, meaning
$(\Delta f)^a=\Delta f^a$ for all $a\in GL(n)$, where $f^a(X)=f(X[a])$.
In fact, the differential operators $L_k=\tr(\vartheta_X)^k,\ k=1,2,\ldots$ are all 
$GL(n)$-invariant~\cite[Exercise 1.1.27]{terras}.  This follows from the fact
that, if $Y=X[a]$ for some fixed $a\in GL(n)$, then 
\be\label{cr1} \partial_X=a\, \partial_Y \, a^t.\ee
If $Y=X^{-1}$, then $\vartheta_Y=-\vartheta_X'$, where $\vartheta_X' f = (\partial_X f)X$.
It follows that $L_k^Y=(-1)^k L_k^X$.  In particular, the Laplacian is invariant under this
change of variable.

\subsection{Chain rule for quadratic change of variables}

It is known \cite{mj,bfj} that for each $X,Y\in\P$, the equation $Y=AXA$ has a unique 
solution in $\P$, namely
$$A=X^{-1/2}(X^{1/2}YX^{1/2})^{1/2} X^{-1/2}.$$
This fact may also be deduced from
 \cite[Theorem 4.1]{wonham}.
If $X$ is fixed, then
\be\label{cr2}
\partial_A=XA\, \partial_Y+\partial_Y AX.
\ee

\subsection{Calculus}

For $A\in\P$ fixed:
\be\label{dax}
\vartheta_X \tr(AX) = XA,\qquad \vartheta_X \tr(AX^{-1}) = -AX^{-1}
\ee
\be\label{d2ax}
\Delta_X \tr (AX)= \frac{n+1}{2} \tr(AX),\qquad  \Delta_X \ \tr (AX^{-1})= \frac{n+1}{2} \tr(AX^{-1})
\ee
\be\label{d2F}
\Delta_X \ F = \left[\Delta_X \ln F + \tr(\vartheta_X\ln F)^2\right] F
\ee
\be\label{d2etr}
\Delta_X \etr(AX) = \left[ \frac{n+1}{2} \tr(AX) + \tr(AXAX)\right] \etr(AX)
\ee
\be\label{pr}
\Delta(fg)=(\Delta f)g+f (\Delta g)+2\tr[(\vartheta_Xf)( \vartheta_X g)]
\ee
\be
\vartheta_X (AX) = \frac12 (A+\tr A) X
\ee
For positive integers $k$,
\be\label{txk}
\vartheta_X \tr X^k=k X^k,\qquad \vartheta_X^2 \tr X^k=k \vartheta_X(X^k)
= \frac{k^2}2 X^k+ \frac{k}2 \sum_{j=1}^{k} X^j \tr X^{k-j}
\ee
\be\label{power}
\Delta_X \tr X^k = \frac{k^2}2 \tr X^k + \frac{k}2 \sum_{j=1}^{k} \tr X^j \tr X^{k-j}
\ee
\be\label{power2}
\Delta_X \tr X^{-k} = \frac{k^2}2 \tr X^{-k}+ \frac{k}2 \sum_{j=1}^{k} \tr X^{-j} \tr X^{j-k}
\ee

\subsection{Integration}
Denote the $GL(n)$-invariant volume element on $\P$ by 
$$\mu(dX)=|X|^{-(n+1)/2}\prod_{1\le i\le j\le n} dx_{ij},$$
where $dx_{ij}$ is the Lebesgue measure on $\R$.
If we write $X=a[k]$, where $a$ is diagonal with entries $a_1,\ldots,a_n>0$
and $k\in O(n)$, then
\be\label{ed}
\mu(dX)=c_n dk \prod_{i<j} |a_i-a_j| \prod_{i=1}^n a_i^{-(n-1)/2-1} da_i ,
\ee
where $dk$ is the normalised Haar measure on $O(n)$ and $c_n$ is a normalisation constant.

\subsection{Power and Gamma functions}
For $s\in\C^n$, define the power function 
$$p_s(X)=\prod_{k=1}^n |X^{(k)}|^{s_k},$$
where $X^{(k)}$ denotes the $k\times k$ upper left hand corner of $X$.
For $s\in\C^n$ satisfying 
\be\label{gamma} 2\Re (s_k+\ldots+s_n)>k-1,\qquad k=1,\ldots,n\ee
define
$$\Gamma_n(s)=\int_\P p_s(X) \etr(- X) \mu(dX)
 = \pi^{n(n-1)/4} \prod_{k=1}^n \Gamma\left(s_k+\ldots+s_n-\frac{k-1}2\right).$$
For $s=(0,\ldots,0,\nu)$, we will write $p_s(X)=e_\nu(X)=|X|^\nu$ and  $\Gamma_n(\nu)=\Gamma_n(0,\ldots,0,\nu)$.
The spherical functions on $\P$ are defined, for $s\in\C^n$, by
$$h_s(X)=\int_{O(n)} p_s(X[k])\; dk,$$
where $dk$ denotes the normalised Haar measure on $O(n)$.

The power function $p_s$ is an eigenfunction of the 
Laplacian on $\P$, with eigenvalue
$$\l_2(s)=\sum_{i=1}^n r_i^2+\frac{n-n^3}{48},\qquad
r_i=s_i+\cdots+s_n+\frac{n+1-2i}4.$$
(See, for example, \cite[Exercise 1.2.12 \& Equation (1.93)]{terras}.)
The functions $p_{s,k}(X)=p_s(X[k]),\ k\in O(n)$, and $h_s(X)$, 
are also eigenfunctions
of $\Delta$ with eigenvalue $\l_2(s)$.

For $s=(0,\ldots,0,\nu)$, so that $p_s(X)=e_\nu(X)=|X|^\nu$, 
we note:
\be\label{de}
\vartheta_X e_\nu(X) = \nu \, e_\nu(X)\, I_n,\qquad \tr \vartheta_X \, e_\nu(X) = n\nu \, e_\nu(X), 
\qquad \tr \vartheta_X \, \ln e_\nu(X) = n\nu ,\ee
\be\label{d2e} \Delta_X \, e_\nu(X) = n\nu^2 \, e_\nu(X),\qquad \Delta_X \, \ln e_\nu(X) = 0. \ee

\subsection{Bessel functions}

Following Terras~\cite{terras}, for $s\in\C^n$ and $V,W\in\P$, define
$$K_n(s|V,W)=\int_\P p_s(Y) \etr(- VY -  WY^{-1}) \mu(dY).$$
As remarked in~\cite{terras}, following Exercise~1.2.16, we can
always reduce one of the positive matrix arguments in $K_n(s|V,W)$
to the identity: if $W=I[g]$, where $g$ is upper triangular 
with positive diagonal coefficients, then
\be\label{inv}
K_n(s| V,W) = p_s(W) K_n(s|gVg^t,I).
\ee
In the following, if $s=(0,\ldots,0,\nu)$ we will write $K_n(s|V,W)=K_n(\nu|V,W)$.

For $\nu\in\C$ and $X\in\P$, define 
\be\label{herz}
B_\nu(X)=\int_\P e_\nu(Y) \etr(- XY - Y^{-1}) \mu(dY).
\ee
This function was introduced by Herz~\cite{herz}, and is related to $K_n$ by
$$B_\nu(X)=K_n(\nu| X,I).$$ 
We note that $B_{-\nu}(X)=e_\nu(X)B_\nu(X)$, which implies
\be\label{sym}
e_\nu(V) K_n(\nu|V,W)=e_\nu(W) K_n(-\nu|V,W).
\ee
The asymptotic behaviour of $B_\nu(X)$ for large arguments has been studied via Laplace's method in the paper \cite{bw},
see also \cite[Appendix B]{g18}.  If we fix $M\in\P$ and let $X=z^2 M^2/2$, then it holds that, as $z\to\infty$,
$$B_\nu(X)=C(\nu,M)
z^{-\nu-n(n+1)/4} e^{-z\tr M} (1+O(z^{-1})),$$
where
$$C(\nu,M)=2^{\nu-n(n+1)/4} \pi^{n(n+1)/4} |M|^{-\nu-1/2} \prod_{i<j}(m_i+m_j)^{-1/2},$$
and $m_i$ denote the eigenvalues of $M$.  In particular, taking $M=I$ and $z^2/2=\alpha$, say, we deduce
from the same application of Laplace's method, the following lemma, which we record here for later reference.
\begin{lem}\label{gig}
For $\alpha>0$, let $A(\alpha)$ be distributed according to the (matrix GIG) law
$$B_\nu(\alpha I)^{-1} \etr(-\alpha A-A^{-1}) \mu(dA).$$
Then $\alpha^{1/2} A(\alpha)\to I$, in probability, as $\alpha\to\infty$.
\end{lem}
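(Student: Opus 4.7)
The plan is to apply Laplace's method to the density after a convenient rescaling.  Set $B=\alpha^{1/2}A$.  Since the invariant measure $\mu$ is scale-free, $\mu(dA)=\mu(dB)$; moreover $\alpha\tr A+\tr A^{-1}=\alpha^{1/2}(\tr B+\tr B^{-1})$ and $e_\nu(A)=\alpha^{-n\nu/2}e_\nu(B)$, and the same change of variable shows that the normalising constant of the rescaled density equals $\alpha^{n\nu/2}B_\nu(\alpha I)$.  Writing $\phi(B):=\tr B+\tr B^{-1}$, the law of $B$ is thus proportional to $e_\nu(B)\etr(-\alpha^{1/2}\phi(B))\mu(dB)$, and the lemma becomes the statement that $B\to I$ in probability as $\alpha\to\infty$.

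The key observation is that $\phi(B)=\sum_i(b_i+b_i^{-1})$, where $b_i$ are the eigenvalues of $B$, attains its unique minimum $2n$ on $\P$ at $B=I$, and $\phi(B)\to\infty$ whenever any eigenvalue of $B$ tends to $0$ or $\infty$; in particular the sublevel sets of $\phi$ are compact.  Fix $\epsilon>0$ and a bounded neighbourhood $U_\epsilon$ of $I$ in $\P$; by compactness and continuity of $\phi$, $\delta:=\inf_{B\notin U_\epsilon}\phi(B)-2n>0$.  Splitting
$$\etr(-\alpha^{1/2}\phi(B))=\etr\bigl(-(\alpha^{1/2}-1)\phi(B)\bigr)\etr(-\phi(B))$$
and using $\phi(B)\ge 2n+\delta$ on the complement of $U_\epsilon$, one obtains
$$\int_{\P\setminus U_\epsilon}e_\nu(B)\etr(-\alpha^{1/2}\phi(B))\mu(dB)\le e^{-(\alpha^{1/2}-1)(2n+\delta)}B_\nu(I),$$
and $B_\nu(I)$ is finite because $\etr(-Y-Y^{-1})$ decays super-polynomially as any eigenvalue of $Y$ tends to $0$ or $\infty$.

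The Laplace expansion of $B_\nu(\alpha I)$ quoted just above provides the matching lower bound: its exponential rate is $e^{-2n\sqrt{\alpha}}$, with $2n\sqrt{\alpha}$ being precisely the minimum of the original exponent $\alpha\tr A+\tr A^{-1}$ (attained at $A=\alpha^{-1/2}I$), multiplied by a polynomial prefactor in $\alpha$.  Taking the ratio, $P(B\notin U_\epsilon)$ is bounded by a power of $\alpha$ times $e^{-\delta\sqrt{\alpha}}$, which vanishes, giving the claim.  The principal obstacle is aligning this sharp Laplace rate in the denominator with the upper bound obtained from $\phi\ge 2n+\delta$ outside $U_\epsilon$; since both originate in the same minimisation problem for $\phi$, the alignment is automatic once the sharp rate $2n\sqrt{\alpha}$ has been read off from the quoted asymptotic.
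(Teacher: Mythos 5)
Your argument is correct and is essentially the paper's proof: the lemma is deduced there from the same Laplace-method analysis of $B_\nu$, i.e.\ concentration of the matrix GIG density at the minimiser $A=\alpha^{-1/2}I$ of the exponent $\alpha\tr A+\tr A^{-1}$, which is exactly what your rescaling $B=\alpha^{1/2}A$ plus tail estimate makes explicit. One minor remark: plugging $M=I$, $z^2/2=\alpha$ literally into the displayed asymptotic gives the rate $e^{-n\sqrt{2\alpha}}$ rather than your (correct) $e^{-2n\sqrt{\alpha}}$ --- a normalisation quirk in the quoted formula --- but since all you need is that the exponential rate constant of the normalising integral equals the minimum $2n$ of $\tr B+\tr B^{-1}$, which your bound outside $U_\epsilon$ beats by $\delta\sqrt{\alpha}$, the conclusion is unaffected.
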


\subsection{Standard probability distributions}\label{spd}

The {\em Wishart distribution} on $\P$ with parameters $\Sigma\in\P$ and $p>n-1$ has density
$$\Gamma_n(p/2)^{-1} |\Sigma^{-1}X/2|^{p/2} \etr(-\Sigma^{-1}X/2) \mu(dX).$$
If $p\ge n$ is an integer and $A$ is a $p\times n$ random matrix with independent standard normal entries,
then $\Sigma^{1/2}A^tA\Sigma^{1/2}$ is Wishart distributed with parameters $\Sigma$ and $p$.
The {\em inverse Wishart distribution} on $\P$, with parameters $\Sigma\in\P$ and $p>n-1$,
is the law of the inverse of a Wishart matrix with parameters $\Sigma$ and $p$,
and has density
$$\Gamma_n(p/2)^{-1} |\Sigma^{-1}X^{-1}/2|^{p/2} \etr(-\Sigma^{-1}X^{-1}/2) \mu(dX).$$
The {\em matrix GIG} (generalised inverse Gaussian) distribution on $\P$ with parameters $\nu\in\R$ and $A,B\in\P$
is defined by
$$K_n(\nu| A,B)^{-1} |X|^{\nu} \etr(-AX-BX^{-1}) \mu(dX).$$

\subsection{Invariant kernels}

A kernel $k(X,Y)$ is {\em invariant} if, for all $a\in GL(n)$,
$$k(X[a],Y[a])=k(X,Y).$$  
For $k$ sufficiently smooth, this implies that
$$\Delta_X k(X,Y)=\Delta_Y k(X,Y).$$
Examples include 
\be\label{k}
k(X,Y)=\etr(-YX^{-1})
\ee
and, more generally, for $\nu\in\C$,
\be\label{knu}
k_\nu(X,Y)=e_{-\nu}(YX^{-1}) \etr(-YX^{-1}).
\ee
We note that the kernel $k$ defined by \eqref{k} satisfies
\be\label{dk}
\vartheta_X \ln k =  - \vartheta_Y \ln k = YX^{-1} ,\ee
\be\label{d2k} \Delta_X k = \Delta_Y k = \left[ - \frac{n+1}2 \tr(YX^{-1}) + \tr(YX^{-1}YX^{-1})\right] k .\ee

\subsection{Brownian motion and diffusion}
We define Brownian motion in $\P$ with drift $\nu\in\R$ to be the diffusion process in $\P$ with generator
$$\Delta_X^{(\nu)} = \Delta_X + 2\nu\, \tr \vartheta_{X}.$$
More generally, if $\varphi$ is a positive eigenfunction of the Laplacian on $\P$ with eigenvalue $\l$, 
then we may consider the corresponding Doob transform
$$\Delta_X^{(\varphi)} = \varphi(X)^{-1} \circ (\Delta_X-\l) \circ \varphi(X) =
\Delta_X+2\tr(\vartheta_X\ln\varphi(X) \ \vartheta_X).$$
If $\varphi(X)=|X|^\nu$ for some $\nu\in\R$, then $\l=\nu^2 n$ and $\Delta_X^{(\varphi)} \equiv \Delta_X^{(\nu)}$.
We shall refer to the diffusion process with infinitesimal generator $\Delta_X^{(\varphi)}$
as a Brownian motion in $\P$ with drift $\varphi$.

A Brownian motion in $\P$, with drift $\nu$, may be constructed as follows.
Let $b_t,\ t\ge 0$ be a standard Brownian motion in the Lie algebra $\mathfrak{gl}(n,\R)$
of real $n\times n$ matrices, that is, each matrix entry evolves as a standard Brownian
motion on the real line.  Set $\beta_t=b_t/\sqrt{2}+\nu t$.  Define a Markov process $G_t,\ t\ge 0$ 
in $GL(n)$ via the Stratonovich SDE: $\partial G_t = \partial\beta_t\, G_t$.  
When $\nu=0$, this is called a right-invariant Brownian motion in $GL(n)$; 
thus, we shall refer to $G$ as a right-invariant Brownian motion in $GL(n)$ with drift $\nu$.  
Then $Y=G^tG$ is a Brownian motion in $\P$ with drift $\nu$.  Note that $Y$ satisfies
the Stratonovich SDE
$$\partial Y = G^t (\partial\beta+\partial\beta^t) G.$$
By orthogonal invariance of the underlying Brownian motion in $\mathfrak{gl}(n,\R)$, one may replace the $G$ 
and $G^t$ factors in this equation by $Y^{1/2}$ to obtain a 
closed SDE for the evolution of $Y$.

We will also consider more general diffusions on $\P^r$,
with generators of the form
$$L=\sum_{i=1}^r [\Delta_{X_i} + \tr(a_i(X)\partial_{X_i})],$$
where the $a_i$ are locally Lipschitz functions on $\P^r$.  
For such generators, we may take the domain to be
$C^2_c(\P^r)$, the set of continuously twice
differentiable, compactly supported, functions on $\P^r$.
If $\rho$ is a probability measure on $\P^r$ and the martingale
problem associated with $(L,\rho)$ is well posed, then we
may construct a realisation of the corresponding Markov
process by solving the (Stratonovich) SDE's:
$$\partial X_i = X_i^{1/2} \partial S_i X_i^{1/2} + a_i(X) dt,$$
where $b_1,\ldots,b_r$ are independent
standard Brownian motions in $\mathfrak{gl}(n,\R)$,
$S_i=(b_i+b_i^t)/\sqrt{2}$, and $X(0)$ is chosen according to $\rho$.

\section{Brownian particles with one-sided interaction}\label{onesided}
Consider the differential operator on $\P^2$ defined  by
\be\label{2p} T = \Delta_Y + \Delta_X + 2\tr(Y\partial_X).\ee
Let $k(X,Y)=\etr(-YX^{-1})$ and consider the integral operator defined,
for suitable test functions $f$ on $\P^2$, by
$$(K f)(X) = \int_\P k(X,Y) f(X,Y) \mu(dY),\qquad X\in\P.$$
Then, on a suitable domain, the following intertwining relation holds:
\be\label{int-burke}
\Delta \circ K = K \circ T.
\ee
Indeed, let us write $k=k(X,Y)$, $f=f(X,Y)$ and note the following identities:
$$\Delta_X k=\Delta_Y k,\qquad \vartheta_X k=YX^{-1}k,$$
$$\Delta_X(kf) = f \Delta_X k+k\Delta_X f+2\tr(\vartheta_Xk\vartheta_Xf)
= f \Delta_Y k+k\Delta_X f+2k\tr(Y\partial_X f).$$
It follows, using the fact that $\Delta$ is self-adjoint with respect to $\mu$, that
\begin{eqnarray*}
\Delta (K f) (X) &=& \Delta_X \int_\P k f \mu(dY) \\
&=& \int_\P (f\Delta_Y k+k\Delta_X f+2k\tr(Y\partial_X f)) \mu(dY)\\
&=& \int_\P k \; (Tf)\; \mu(dY),
\end{eqnarray*}
as required.  

Now suppose $\varphi$ is a positive eigenfunction of $\Delta$ with eigenvalue $\l$
such that
$$\tilde\varphi(X)=\int_\P \varphi(Y) k(X,Y) \mu(dY)<\infty .$$
Then $\tilde\varphi$ is also a positive eigenfunction of $\Delta$ with eigenvalue $\l$:
\bea
\Delta_X \tilde\varphi(X) &=& \int_\P \varphi(Y) \Delta_X k(X,Y) \mu(dY) \\
&=&  \int_\P \varphi(Y) \Delta_Y k(X,Y) \mu(dY) \\
&=&  \int_\P  [\Delta_Y \varphi(Y)] k(X,Y) \mu(dY) \\
&=&   \l \int_\P \varphi(Y) k(X,Y) \mu(dY) = \l \tilde\varphi(X).
\eea
For example, if $\varphi=p_s$, for some $s\in\R^n$ satisfying \eqref{gamma}, 
then $\tilde\varphi=\Gamma_n(s)\varphi$ (see, for example, \cite[Exercise 1.2.4]{terras}).
Similarly, if $s\in\R^n$ satisfies \eqref{gamma},
and $\varphi=h_s$ or $\varphi=p_{s,k}$ for some $k\in O(n)$, then it also holds 
that $\tilde\varphi=\Gamma_n(s)\varphi$.  More generally, $\tilde \varphi$ is 
a constant multiple of $\varphi$ whenever $\varphi$ is a simultaneous 
eigenfunction of the Laplacian and the integral operator with kernel $k(X,Y)$;
note that these two operators commute, since $\Delta_X k = \Delta_Y k$.

Define
$$(K_\varphi f)(X) = \tilde\varphi(X)^{-1} \int_\P \varphi(Y) k(X,Y) f(X,Y) \mu(dY),$$
and
$$T^{(\varphi)}=\varphi(Y)^{-1}\circ (T-\l) \circ \varphi(Y)=\Delta^{(\varphi)}_Y + \Delta_X + 2\tr(Y\partial_X).$$
Then \eqref{int-burke} extends to:
\be\label{int-burke2}
\Delta^{(\tilde\varphi)} \circ K_\varphi = K_\varphi \circ T^{(\varphi)}.
\ee
The intertwining relation \eqref{int-burke2} has a probabilistic interpretation, as follows.
Set $$\pi(X,Y)=\tilde\varphi(X)^{-1} \varphi(Y) k(X,Y).$$
Let $\rho$ be a probability measure on $\P$ and define a probability
measure on $\P\times\P$ by
$$\sigma(dX,dY)=\pi(X,Y) \rho(dX) \mu(dY).$$
Suppose that $\varphi$ is such that the martingale problems associated with $(\Delta^{(\tilde\varphi)},\rho)$ 
and $(T^{(\varphi)},\sigma)$ are well-posed,
and that $(X_t,Y_t)$ is a diffusion process with infinitesimal generator $T^{(\varphi)}$ and initial law $\sigma$.
Then it follows from the theory of Markov functions
(see Appendix~\ref{mfa}) that, with respect to its own filtration, 
$X_t$ is a Brownian motion with drift $\tilde\varphi$ and initial distribution $\rho$;
moreover, the conditional law of $Y_t$, given $X_s,\ s\le t$, only depends on $X_t$
and is given by $ \pi(X_t,Y) \mu(dY)$.  
This statement is analogous to the Burke output theorem for the $M/M/1$ queue, although
in this context the `output' (a Brownian motion with drift $\tilde\varphi$) need not have the 
same law as the `input' (a Brownian motion with drift $\varphi$).  Note however that these 
Brownian motions are equivalent whenever $\tilde\varphi$ is a constant multiple of $\varphi$,
so whenever this holds the output does have the same law as the input.  This is always the
case when $n=1$, as was observed in the paper~\cite{oy1}.

We note that the intertwining relation \eqref{int-burke2} also implies that 
$$(T^{(\varphi)})^*\pi=0,$$
where $(T^{(\varphi)})^*$ is the formal adjoint of $T^{(\varphi)}$.

One can replace $k$ by any invariant kernel $k'$ and the above remains valid with
$$T=\Delta_Y + \Delta_X + 2\tr(\vartheta_X\ln k'(X,Y)\ \vartheta_X).$$
For example, if $k'=k_{\nu}$, defined by \eqref{knu}, then
$$T = \Delta_Y + \Delta^{(\nu)}_X + 2\tr(Y\partial_X).$$
In this case, we require $\tilde\varphi= k_\nu\varphi$ to be finite, where
$$k_\nu\varphi(X)=\int_\P \varphi(Y) k_\nu(X,Y) \mu(dY).$$
For example, if $\varphi(X)=|X|^\l$, then this holds provided $2(\l-\nu)>n-1$,
in which case $k_\nu\varphi=\Gamma_n(\l-\nu)\varphi$.  For this example,
the associated martingale problems are well posed, 
as shown in Appendix~\ref{wpmp} (Example 2),
so we may state the following theorem.  

For $2a>n-1$, we define the Markov kernel 
\be\label{pia}
\Pi_a(X,dY)=\Gamma_n(a)^{-1} |YX^{-1}|^a \etr(-YX^{-1}) \mu(dY).
\ee
\begin{thm} \label{ln-burke} 
Suppose $2(\l-\nu)>n-1$, and let $(X_t,Y_t)$ be a diffusion process in $\P^2$ with infinitesimal generator 
$$T_{\l,\nu}=\Delta_Y^{(\l)} + \Delta^{(\nu)}_X + 2\tr(Y\partial_X),$$ 
and initial law $\delta_{X_0}(dX) \Pi_{\l-\nu}(X,dY)$.
Then, with respect to its own filtration,  $X_t$ is a Brownian motion with drift $\l$ started at $X_0$.  
Moreover, the conditional law of $Y_t$, given $X_s,\ s\le t$, only depends on $X_t$
and is given by $ \Pi_{\l-\nu}(X_t,dY)$. 
\end{thm}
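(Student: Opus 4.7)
The plan is to recognize this as a direct application of the Burke-type intertwining framework set out in the paragraphs preceding the theorem, specialized to $\varphi(Y)=|Y|^\l=e_\l(Y)$ and the invariant kernel $k'=k_\nu$. First I would check that $e_\l$ is admissible: by \eqref{d2e}, $e_\l$ is a positive eigenfunction of $\Delta$ with eigenvalue $\l^2 n$, and since the Doob transform by $e_\l$ is exactly $\Delta^{(\l)}$, the $Y$-marginal of the transformed generator $T^{(\varphi)}$ is a Brownian motion in $\P$ with drift $\l$. Substituting $k'=k_\nu$ into the formula for $T$ given immediately after equation \eqref{knu}, one reads off $T^{(\varphi)}=\Delta_Y^{(\l)}+\Delta_X^{(\nu)}+2\tr(Y\partial_X)=T_{\l,\nu}$, matching the generator in the statement.

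The second step is to compute $\tilde\varphi=k_\nu\varphi$. Under the hypothesis $2(\l-\nu)>n-1$ the defining integral converges, and by the identity recorded just after \eqref{knu} it evaluates to $\tilde\varphi(X)=\Gamma_n(\l-\nu)|X|^\l$. Since $\tilde\varphi$ differs from $\varphi$ only by a positive multiplicative constant, $\Delta^{(\tilde\varphi)}=\Delta^{(\l)}$. A short computation then gives
\[
\pi(X,Y)=\frac{\varphi(Y)\,k_\nu(X,Y)}{\tilde\varphi(X)}=\Gamma_n(\l-\nu)^{-1}|YX^{-1}|^{\l-\nu}\etr(-YX^{-1}),
\]
so $\pi(X,Y)\mu(dY)=\Pi_{\l-\nu}(X,dY)$, and hence the initial law prescribed in the theorem is precisely $\sigma(dX,dY)=\rho(dX)\pi(X,Y)\mu(dY)$ with $\rho=\delta_{X_0}$.

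With these identifications in place, both assertions of the theorem are immediate consequences of the Markov-function principle formulated after \eqref{int-burke2} (and proved in Appendix \ref{mfa}): the first coordinate $X_t$ is, with respect to its own filtration, a diffusion with generator $\Delta^{(\tilde\varphi)}=\Delta^{(\l)}$ started from $X_0$, i.e.\ Brownian motion in $\P$ with drift $\l$; and the conditional distribution of $Y_t$ given $\{X_s:s\le t\}$ depends only on $X_t$ and equals $\pi(X_t,Y)\mu(dY)=\Pi_{\l-\nu}(X_t,dY)$.

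The one genuine technical obligation, and the step I expect to be the main obstacle, is to verify that the martingale problems needed to invoke this principle are well-posed with the given initial laws: well-posedness for $T_{\l,\nu}$ on $\P^2$ starting from $\sigma$, and for $\Delta^{(\l)}$ on $\P$ starting from $\delta_{X_0}$. The latter is standard, while the former is precisely the content of Example 2 of Appendix \ref{wpmp}, which I would simply cite. All the remaining algebraic manipulations above are routine given the identities and integral formulas already established in the preliminaries.
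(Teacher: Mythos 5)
Your proposal is correct and follows essentially the same route as the paper: the theorem is exactly the specialization of the preceding intertwining framework to $\varphi(Y)=|Y|^\l$ and the invariant kernel $k_\nu$, with $k_\nu\varphi=\Gamma_n(\l-\nu)\varphi$ under $2(\l-\nu)>n-1$, the Markov-function theorem of Appendix~\ref{mfa} supplying both conclusions, and well-posedness cited from Example 2 of Appendix~\ref{wpmp}. All of your identifications (of $T^{(\varphi)}$ with $T_{\l,\nu}$ and of $\pi(X,Y)\mu(dY)$ with $\Pi_{\l-\nu}(X,dY)$) match the paper's argument.
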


The above example extends naturally to a system of $N$ particles with one-sided interactions, as follows.
Let $\nu_2,\ldots,\nu_N\in\R$, and $\varphi$ a positive eigenfunction of $\Delta$ such that
$$\tilde\varphi(X)=(k_{\nu_N}\circ\cdots\circ k_{\nu_2})\varphi(X)<\infty.$$
For example, if $\varphi(X)=|X|^{\nu_1}$ then this condition is satisfied provided $\nu_i<\nu_1$
for all $1<i\le N$, in which case we have
$$\tilde\varphi(X)=\prod_{i=2}^N \Gamma_n(\nu_1-\nu_i) \ |X|^{\nu_1}.$$
Define
\be\label{t}
T=\Delta_{X_1}^{(\varphi)}+\sum_{i=2}^N [\Delta_{X_i}^{(\nu_i)} + 2 \tr(X_{i-1}\partial_{X_i})],
\ee
$$\pi(X_1,\ldots,X_N)=\tilde\varphi(X_N)^{-1} \varphi(X_1) \prod_{i=2}^N k_{\nu_i}(X_i,X_{i-1}),$$
$$(Kf)(X_N)=\int_{\P^{N-1}} \pi(X_1,\ldots,X_{N-1},X_N) \,
f(X_1,\ldots, X_N) \,\mu_{N-1}(dX_1,\ldots, dX_{N-1}).$$
Then 
$$\Delta_{X_N}^{(\tilde\varphi)} \circ K = K \circ T.$$
This implies that $T^*\pi=0$ and, moreover, if $\varphi$ is such that the relevant martingale
problems are well posed and the system is started in equilibrium, 
then $X_N$ is a Brownian motion, in its own filtration, with drift $\tilde\varphi$.
This certainly holds in the case $\varphi(X)=|X|^{\nu_1}$, with $\nu_i<\nu_1$ for all $1<i\le N$.
Note that this can also be seen as a direct consequence of Theorem~\ref{ln-burke}.

Finally we remark that, by a simple change of variables, one may also consider
\be\label{tprime}
T'=\Delta_{X_1}^{(\varphi)}+\sum_{i=2}^N [\Delta_{X_i}^{(\nu_i)} - 2 \tr(X_i X_{i-1}^{-1}\vartheta_{X_i})].
\ee
The operator $T'$ is related to $T$ as follows.  Write $T=T_X(\varphi,\nu)$,
$T'=T'_X(\varphi,\nu)$.  Then, under the change of variables $Y_i=X_i^{-1}$, 
$T_X(\varphi,\nu)=T'_Y(\bar\varphi,\bar\nu)$, where $\bar\varphi(Y)=\varphi(X^{-1})$
and $\bar\nu_i=-\nu_i$.

In this case, if we assume that $\tilde\varphi = (k_{-\nu_N}\circ\cdots\circ k_{-\nu_2})\varphi$ is finite,
and define
$$\pi'(X_1,\ldots,X_N)=\tilde\varphi(X_N)^{-1} \varphi(X_1) \prod_{i=2}^N k_{-\nu_i}(X_{i-1},X_{i}),$$
$$(K'f)(X_N)=\int_{\P^{N-1}} \pi'(X_1,\ldots,X_{N-1},X_N) \,
f(X_1,\ldots, X_N) \,\mu_{N-1}(dX_1,\ldots, dX_{N-1}),$$
then it holds that
$\Delta_{X_N}^{(\tilde\varphi)} \circ K' = K' \circ T'$,
with the analogous conclusions.

\section{Connection with Bessel functions}

The previous example, with two particles, extends naturally to
$$G=\Delta_Y + \Delta_{X_1} +2\tr(Y\partial_{X_1})+ \Delta_{X_2}-2\tr(X_2Y^{-1}X_2\partial_{X_2}).$$
Note that this is a combination of the $T$ and $T'$ of the previous section.

Writing $X=(X_1,X_2)$, define
$$H=\Delta_{X_1}+\Delta_{X_2}-2\tr(X_1^{-1}X_2),\qquad
q(X,Y)=  \etr(-YX_1^{-1}-X_2 Y^{-1}).$$
Consider the integral operator, defined for suitable $f$ on $\P^2\times\P$ by
$$(Qf)(X)=\int_\P q(X,Y) f(X,Y) \mu(dY),\qquad X\in\P^2.$$
Then the following intertwining relation holds:
\be\label{int-sym}
H \circ Q = Q\circ G.
\ee
Indeed, let us write $q=q(X,Y)$, $f=f(X,Y)$ and note that
$$H_Xq=\Delta_Y q,\qquad
H_X(qf)=f \Delta_Y q+q(G-\Delta_Y)f.$$
The claim follows, using the fact that $\Delta$
is self-adjoint with respect to $\mu$.

Suppose that $\varphi$ is a positive eigenfunction of $\Delta$ with eigenvalue $\l$
such that
$$\psi(X)=\int_\P \varphi(Y) q(X,Y) \mu(dY)<\infty .$$
Then $\psi$ is a positive eigenfunction of $H$ with eigenvalue $\l$.
We remark that, if $\varphi=p_s$, then
$$\psi(X)= K_n(s|\ X_1^{-1},X_2).$$
Let us define
$$(Q_\varphi f)(X) = \psi(X)^{-1} \int_\P \varphi(Y) q(X,Y) f(X,Y) \mu(dY),$$
$$H^{(\psi)}=\psi(X)^{-1}\circ (H-\l) \circ \psi(X),$$
$$G^{(\varphi)}=\varphi(Y)^{-1}\circ (G-\l) \circ \varphi(Y)=\Delta^{(\varphi)}_Y + \Delta_{X_1} +2\tr(Y\partial_{X_1})+ \Delta_{X_2}-2\tr(X_2Y^{-1}X_2\partial_{X_2}).$$
Then \eqref{int-sym} extends to:
\be\label{int-sym2}
H^{(\psi)} \circ Q_\varphi = Q_\varphi \circ G^{(\varphi)}.
\ee
This intertwining relation has a probabilistic interpretation, as follows.
Let $\rho$ be a probability measure on $\P^2$ and define a probability
measure on $\P^2\times\P$ by
$$\sigma(dX,dY)=\psi(X)^{-1} \rho(dX) \varphi(Y) q(X,Y) \mu(dY).$$
Suppose that $\varphi$ is such that the martingale problems associated with $(H^{(\psi)},\rho)$ 
and $(G^{(\varphi)},\sigma)$ are well-posed, and
that $(X_t,Y_t)$ is a diffusion process with infinitesimal generator $G^{(\varphi)}$ and initial law $\sigma$.
Then, with respect to its own filtration, $X_t$ is a diffusion with generator $H^{(\psi)}$ and initial distribution $\rho$;
moreover, the conditional law of $Y_t$, given $X_s,\ s\le t$, only depends on $X_t$
and is given by
$$\psi(X_t)^{-1} \varphi(Y) q(X_t,Y) \mu(dY).$$

The above example is a special case of a more general construction which will be discussed in
Section \ref{pN}.

\section{Matrix Dufresne identity and $2M-X$ theorem}\label{my}

Let $M=\Delta_Y +\tr(Y\partial_A)$.  If $Y=AXA$ then, in the variables $(X,A)$, we can write
$$M=\Delta_X -2\tr(XAX\partial_X)+\tr(AXA\partial_A).$$
To see this, let $f=f(X,A)=g(AXA,A)$ and first note that, by invariance, 
$$\Delta_Xf=\Delta_Y g(Y,A)\ \Big|_{Y=AXA}.$$
Let us write $g_1(Y,A)=\partial_Y g(Y,A)$, $g_2(Y,A)=\partial_A g(Y,A)$.
By \eqref{cr1} and \eqref{cr2}, 
$$\partial_X f = A g_1(AXA,A) A,\qquad \partial_A f = XA g_1(AXA,A) + g_1(AXA,A) AX.$$
It follows that
$$\tr(AXA\partial_A)f-2\tr(XAX\partial_X)f = \tr(AXAg_2(AXA,A)) = \tr(Y\partial_A) g(Y,A) \ \Big|_{Y=AXA},$$
as required.

Let us define
$$J=\Delta_X-\tr X,\qquad
p(X,A)=\etr(-AX-A^{-1})$$
and the corresponding integral operator
$$(Pf)(X)=\int_\P p(X,A) f(X,A) \mu(dA).$$
Then, on a suitable domain, the following intertwining relation holds:
\be\label{int-my}
J \circ P = P\circ M.
\ee
Indeed, let us write $p=p(X,A)$, $f=f(X,A)$ and first note that
$$\Delta_X(pf)=f\Delta_Xp+p(\Delta_X-2\tr(XAX\partial_X))f.$$
Now, using the fact that $\Delta$ is self-adjoint with respect to $\mu$,
together with the identity $$\Delta_X\etr(-AX)=\Delta_A\etr(-AX),$$ 
we have
\begin{eqnarray*}
\int_\P f (\Delta_Xp) \mu(dA)&=&\int_\P \etr(-A^{-1}) f\ [\Delta_A \etr(-AX)] \mu(dA)\\
&=& - \int_\P \tr(\vartheta_A\etr(-AX)\ \vartheta_A(\etr(-A^{-1}) f))\mu(dA) \\
&=& - \int_\P p \tr(-AX \vartheta_A-AXA^{-1})f \mu(dA) .
\end{eqnarray*}
It follows that $J(Pf)(X)=P(Mf)(X)$, as required.

Note that the intertwining relation \eqref{int-my} implies
\be\label{int-duf}
J \circ D = D \circ \Delta ,
\ee
where $D$ is the linear operator defined, for suitable $f:\P\to\C$ by
$$(Df)(X)=\int_\P f(AXA) \etr(-AX-A^{-1}) \mu(dA).$$
The intertwining relation \eqref{int-duf} is essentially equivalent to \cite[Corollary 6]{rv16}.

Now suppose $\varphi$ is a positive eigenfunction of $\Delta$ with eigenvalue $\l$
such that $\beta=D\varphi<\infty$.
Then it follows from \eqref{int-duf} that $\beta$ is a positive eigenfunction of $J$ with eigenvalue $\l$.
Note that if we write $\beta(X)=\varphi(X) B_\varphi(X)$,
then this implies
\be\label{FK}
(\Delta_X^{(\varphi)}-\tr X)B_\varphi(X)=0.
\ee
This suggests that, for suitable $\varphi$, the function $B_\varphi$ 
admits a natural probabilistic interpretation, via the Feynman-Kac formula,
and this is indeed the case.
\begin{prop}\label{fk-my} Let $\varphi$ be a positive eigenfunction of $\Delta$
such that $D\varphi<\infty$, and the martingale problem associated with
$\Delta^{(\varphi)}$ is well posed for any initial condition in $\P$.
Let $Y$ be a Brownian motion in $\P$ with drift $\varphi$ started at $X$, and 
denote by $\E_X$ the corresponding expectation.  Assume that, for any $X\in\P$, 
\be\label{finite}
Z=\int_0^\infty \tr Y_s\ ds<\infty
\ee
almost surely, and define $M_\varphi(X)=\E_X e^{-Z}$.
Suppose that $\lim_{X\to 0} M_\varphi(X) =1$ and 
\be\label{BC}
\qquad \lim_{X\to 0} B_\varphi(X) = C_\varphi ,
\ee
where $C_\varphi>0$ is a constant.  Then $B_\varphi(X)=C_\varphi\ M_\varphi(X)$
and, moreover, $B_\varphi$ is the unique bounded solution to \eqref{FK} satisfying 
the boundary condition \eqref{BC}.
\end{prop}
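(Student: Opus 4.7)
My plan is to interpret \eqref{FK} as a Feynman--Kac equation for the diffusion $Y$ with generator $\Delta^{(\varphi)}$ killed at rate $\tr X$, and to characterise its bounded solutions by a martingale-convergence argument. The first step is to verify that $M_\varphi$ itself satisfies \eqref{FK}. By the Markov property, $M_\varphi(Y_t)\,e^{-Z_t}$, where $Z_t:=\int_0^t \tr Y_s\,ds$, is a martingale under $\E_X$; It\^o's formula then forces the bounded-variation part to vanish, which is precisely $(\Delta^{(\varphi)}-\tr X)M_\varphi=0$. Combined with the hypothesis $M_\varphi\to 1$ at $X=0$, this exhibits $M_\varphi$ as a bounded solution of \eqref{FK} with boundary value $1$.

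Next, let $u$ be any bounded solution of \eqref{FK} with $\lim_{X\to 0}u(X)=L$. It\^o's formula shows that $N_t:=u(Y_t)\,e^{-Z_t}$ is a local martingale, and since $|N_t|\le \sup_{X\in\P}|u(X)|$ it is uniformly integrable, hence converges almost surely and in $L^1$ to some $N_\infty$ with $u(X)=\E_X N_\infty$. By \eqref{finite}, $e^{-Z_t}\to e^{-Z}$ almost surely with $e^{-Z}>0$ a.s., so $u(Y_t)$ also has an almost-sure limit $L_\infty$, and $N_\infty=L_\infty\, e^{-Z}$. The crucial observation is that \eqref{finite} together with $\tr Y_s\ge 0$ forces $\liminf_{t\to\infty}\tr Y_t=0$ a.s.; since for $Y\in\P$ the operator norm is dominated by the trace, one has $Y_{t_n}\to 0$ in $\P$ along some random sequence $t_n\uparrow\infty$. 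Then \eqref{BC} gives $u(Y_{t_n})\to L$, and combined with the full a.s.\ convergence $u(Y_t)\to L_\infty$, one concludes $L_\infty=L$ almost surely. Therefore $u(X)=L\,\E_X e^{-Z}=L\,M_\varphi(X)$; with $L=C_\varphi$ this yields $B_\varphi=C_\varphi M_\varphi$ and the uniqueness assertion.

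The principal obstacle is that $B_\varphi$ is defined via $B_\varphi=(D\varphi)/\varphi$, so its global boundedness is not manifest from the definition, whereas the uniqueness argument above needs the bounded hypothesis on $u$. One natural workaround is to localise: stop at the exit time $\tau_k$ of an exhaustion of $\P$ by compact sets to obtain $B_\varphi(X)=\E_X[B_\varphi(Y_{t\wedge\tau_k})\,e^{-Z_{t\wedge\tau_k}}]$, and then pass to $t\to\infty$ and $k\to\infty$; however, controlling the boundary contributions requires some a priori growth bound on $B_\varphi$. The cleanest route is therefore to establish boundedness of $B_\varphi$ directly --- either from the integral representation $\varphi(X)B_\varphi(X)=\int_\P \varphi(AXA)\etr(-AX-A^{-1})\mu(dA)$, or from the Bessel-function asymptotics recorded in the preliminaries (which handle the canonical case $\varphi=e_\nu$, where $B_\varphi$ reduces to the Herz function $B_\nu$) --- after which the characterisation above applies to $B_\varphi$ verbatim.
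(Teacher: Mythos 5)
Your proposal is correct and follows essentially the same route as the paper: Feynman--Kac shows $M_\varphi$ solves \eqref{FK}, and uniqueness among bounded solutions with the prescribed boundary value is obtained from the convergence of the bounded martingale $u(Y_t)e^{-Z_t}$, using \eqref{finite} to drive $Y$ to the origin and \eqref{BC} to identify the limit. Your refinements --- extracting only a subsequence $Y_{t_n}\to 0$ from $\liminf_{t\to\infty}\tr Y_t=0$ rather than asserting $Y_t\to 0$ outright, and flagging that boundedness of $B_\varphi$ must be verified separately (as the paper indeed does in its applications, e.g.\ for $\varphi=e_{-\nu/2}$ and $\varphi=h_s$) --- are sensible but do not constitute a different argument.
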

\begin{proof}
It follows from the Feynman-Kac formula that $M_\varphi$ satisfies
$$(\Delta_X^{(\varphi)}-\tr X)M_\varphi(X)=0.$$
To prove uniqueness, up to a constant factor, suppose $U(X)$ is another bounded solution which
vanishes as $X\to 0$.  Note that, by \eqref{finite}, it must hold that $Y_t\to 0$ almost surely as $t\to\infty$.
Thus, 
$$U(Y_t) \exp\left(-\int_0^t \tr Y_s\ ds\right)$$
is a bounded martingale which converges to $0$ almost surely, as $t\to\infty$,
hence must be identically zero almost surely, which implies $U= 0$, as required.
\end{proof}

If $\varphi(X)=|X|^{-\nu/2}$, then $B_\varphi=B_{-\nu}$ is the matrix $K$-Bessel function
defined by \eqref{herz}.  
If we denote the eigenvalues of $Y_t$ by $\l_i(Y_t)$, arranged in decreasing order, 
then, as shown in~\cite{rv16}, it holds almost surely that, for any initial condition $X\in\P$,
$$\lim_{t\to\infty} \frac1{t} \log \l_i(Y_t) = - \nu + (n-2i+1)/2.$$
In particular, if $\nu>(n-1)/2$, then \eqref{finite} holds.  
In this example, the process $Y_t$ is $GL(n,\R)$-invariant, so we may write
$$M_\varphi(X)=\E_X \exp\left(-\int_0^\infty \tr Y_s\ ds\right)=
\E_I \exp\left(-\int_0^\infty \tr (XY_s)\ ds\right),$$
and it follows, using bounded convergence, that $\lim_{X\to 0} M_\varphi(X) =1$.
On the other hand, again using bounded convergence, we have
$$\lim_{X\to 0} B_{-\nu}(X) = \Gamma_n(\nu).$$
Putting this together and applying Proposition~\ref{fk-my} yields
the following conclusion, in agreement with \cite[Theorem 2]{rv16}.
When $n=1$, this is Dufresne's identity~\cite{duf}.  

\begin{cor}\label{mdr} If $Y$ is a Brownian motion in $\P$ with drift $-\nu/2$,
started at the identity, then $\int_0^\infty Y_s\ ds$ is inverse Wishart 
distributed with parameters $I/2$ and $2\nu$. \end{cor}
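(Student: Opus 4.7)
The plan is to identify the Laplace transform of $\int_0^\infty Y_s\, ds$ directly from Proposition~\ref{fk-my}, and match it against the Laplace transform of the claimed inverse Wishart law. This is essentially a bookkeeping computation once the key ingredients are in place.

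First I would take $\varphi(X)=|X|^{-\nu/2}$ in Proposition~\ref{fk-my}. The hypotheses have essentially been verified in the preceding discussion: the eigenvalue estimates from~\cite{rv16} give \eqref{finite} when $\nu>(n-1)/2$; bounded convergence gives $\lim_{X\to 0} M_\varphi(X)=1$; and the definition \eqref{herz} together with bounded convergence gives $\lim_{X\to 0} B_{-\nu}(X)=\Gamma_n(\nu)$, so $C_\varphi=\Gamma_n(\nu)$. Thus
$$B_{-\nu}(X)=\Gamma_n(\nu)\,\E_X\exp\left(-\int_0^\infty\tr Y_s\,ds\right).$$

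Second, I would use the $GL(n,\R)$-invariance of the process $Y$ (noted in the excerpt just above the corollary) to rewrite, for $a\in GL(n)$ with $X=I[a]$,
$$\E_X\exp\left(-\int_0^\infty\tr Y_s\,ds\right)=\E_I\exp\left(-\int_0^\infty\tr(XY_s)\,ds\right)=\E_I\exp\left(-\tr\left(X\int_0^\infty Y_s\,ds\right)\right).$$
Combining with the previous identity,
$$\E_I\exp\left(-\tr\left(X\int_0^\infty Y_s\,ds\right)\right)=\frac{B_{-\nu}(X)}{\Gamma_n(\nu)},\qquad X\in\P.$$

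Third, I would compute the Laplace transform of the candidate distribution. With $\Sigma=I/2$ and $p=2\nu$, the inverse Wishart density of Section~\ref{spd} simplifies to $\Gamma_n(\nu)^{-1}|W|^{-\nu}\etr(-W^{-1})\mu(dW)$, so if $W$ has this law then by the definition \eqref{herz} of the Herz function,
$$\E\,e^{-\tr(XW)}=\Gamma_n(\nu)^{-1}\int_\P |W|^{-\nu}\etr(-XW-W^{-1})\mu(dW)=\frac{B_{-\nu}(X)}{\Gamma_n(\nu)}.$$

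Since the Laplace transforms agree on all of $\P$, and a probability measure on $\P$ is determined by its Laplace transform there, we conclude that $\int_0^\infty Y_s\,ds$ is inverse Wishart with parameters $I/2$ and $2\nu$. The only non-routine ingredient is Proposition~\ref{fk-my}, which is already established; the principal potential obstacle is just matching the normalising constants correctly (the factor of $1/2$ in $\Sigma$ versus the absence of $\Sigma$ in the definition of $B_{-\nu}$), but this comes out automatically from the calculation.
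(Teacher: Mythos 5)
Your proposal is correct and follows essentially the same route as the paper: verify the hypotheses of Proposition~\ref{fk-my} for $\varphi(X)=|X|^{-\nu/2}$ via the eigenvalue asymptotics from~\cite{rv16} and bounded convergence, then use $GL(n)$-invariance to read off the Laplace transform $\E_I\exp(-\tr(X\int_0^\infty Y_s\,ds))=B_{-\nu}(X)/\Gamma_n(\nu)$ and match it with the inverse Wishart law via \eqref{herz}. The only difference is that you spell out the Laplace-transform comparison that the paper leaves implicit in ``putting this together''.
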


More generally, suppose $\varphi=h_s$, where $s\in\R^n$. 
Define new variables $r_i$ by
$$2(s_i+\cdots+s_n)=2r_i+i-\frac{n+1}2.$$
It is well known that the spherical function $h_s$ is invariant
under permutations of the $r_i$, so we may assume that
$r_1>\cdots>r_n$.  Then it may be shown \cite{bpc}, via a straightforward
modification of the proof of the second part of Theorem 3.1 in \cite{bj}, 
that $$\lim_{t\to\infty} \frac1{2t} \log \l_i(t) = r_i,$$
almost surely, for any initial condition.
In particular, \eqref{finite} holds if, and only if, $r_1<0$.
This condition also ensures that
\be
\int_\P h_s(AXA)\etr(-A^{-1})\mu(dA)<\infty ,
\ee
for all $X\in\P$.
Then, by the uniqueness property of the spherical functions on $\P$
with a given set of eigenvalues \cite[Proposition 1.2.4]{terras}, 
\be\label{id}
\int_\P h_s(AXA)\etr(-A^{-1})\mu(dA)=c_s h_s(X),
\ee
where
\be\label{cs}
c_s=\int_\P h_s(A^2)\etr(-A^{-1})\mu(dA) .
\ee
This implies that
\be
B_s(X)=h_s(X)^{-1} \int_\P h_s(AXA) \etr(-AX-A^{-1}) \mu(dA)
\ee
is bounded.  Using the homogeneity property $h_s(cX)=c^dh_s(X)$, 
where $d=\sum_k ks_k$, we see that, for any fixed $X\in\P$,
$$\lim_{c\to 0} \E_{cX} \exp\left(-\int_0^\infty \tr Y_s\ ds\right) =
\lim_{c\to 0} \E_{X} \exp\left(- c\int_0^\infty \tr Y_s\ ds\right) = 1$$
and
$$\lim_{c\to 0} B_s(cX) = \lim_{c\to 0} h_s(X)^{-1} \int_\P h_s(AXA) \etr(-cAX-A^{-1}) \mu(dA) = c_s.$$
Assuming that these limits extend to
\be\label{limits}
\lim_{X\to 0} \E_X \exp\left(-\int_0^\infty \tr Y_s\ ds\right) =1,\qquad \lim_{X\to 0} B_s(X)=c_s,
\ee
Proposition~\ref{fk-my} would then imply that
\be\label{fk-s}
\E_X \exp\left(-\int_0^\infty \tr Y_s\ ds\right) = c_s^{-1} B_s(X).
\ee
Again using the homogeneity property of $h_s$, this is equivalent to the identity:
\be\label{GD}
\int_0^\infty \tr Y_s\ ds \el \tr (AX),
\ee where $A$ is distributed according to 
the probability measure
$$\nu_s(dA)=c_s^{-1} h_s(X)^{-1} h_s(AXA)\etr(-A^{-1})\mu(dA) .$$
To make this claim rigorous, one would need to establish the existence of the limits 
in \eqref{limits}.  We will not pursue this here.

We remark that, writing $r=-\mu$, we may compute, for $n=1,2,3$:
\be\label{csf}
c_s=\prod_i\Gamma(2\mu_i)\prod_{i<j} B(1/2,\mu_i+\mu_j),
\ee
where $B(x,y)=\Gamma(x)\Gamma(y)/\Gamma(x+y)$ is the beta function.
The analogue of this formula in the complex case is given by \eqref{cl} below,
which is valid for all $n$.  It seems natural to expect \eqref{csf} to be valid for all $n$ also.

Returning to the general setting, let us define
$$(P_\varphi f)(X) = \beta(X)^{-1} \int_\P \varphi(AXA) p(X,A) f(X,A) \mu(dA),$$
$$J^{(\beta)}=\beta(X)^{-1}\circ (J-\l) \circ \beta(X),$$
$$M^{(\varphi)}=\varphi(Y)^{-1}\circ (M-\l) \circ \varphi(Y)=\Delta^{(\varphi)}_Y +\tr(Y\partial_A).$$
As before, with the change of variables $Y=AXA$, we can also write
$$M^{(\varphi)}=\Delta_X^{(\varphi)} -2\tr(XAX\partial_X)+\tr(AXA\partial_A).$$
Then \eqref{int-my} extends to:
\be\label{int-my2}
J^{(\beta)} \circ P_\varphi = P_\varphi \circ M^{(\varphi)}.
\ee
This intertwining relation has a probabilistic interpretation, as follows.
Let $\rho$ be a probability measure on $\P$ and define a probability
measure on $\P\times\P$ by
$$\sigma(dX,dA)= \rho(dX) \gamma_X(dA),$$
where
$$\gamma_X(dA)=\beta(X)^{-1} \varphi(AXA) p(X,A) \mu(dA).$$
Suppose that $\varphi$ is such that the martingale problems associated with $(J^{(\beta)},\rho)$ 
and $(M^{(\varphi)},\sigma)$ are well-posed, and
that $(X_t,A_t)$ is a diffusion process with infinitesimal generator $M^{(\varphi)}$ and initial law $\sigma$.
Then we may apply Theorem~\ref{mf} to conclude that,
with respect to its own filtration, $X_t$ is a diffusion with generator $J^{(\beta)}$ 
and initial distribution $\rho$;
moreover, the conditional law of $A_t$, given $X_s,\ s\le t$, only depends on $X_t$
and is given by $\gamma_{X_t}(dA)$.  

These conditions certainly hold when $\varphi(X)=|X|^{\nu/2}$,
for any $\nu\in\R$, in which case we obtain the following generalisation of  \cite[Proposition 23]{rv16}.
Define $\beta_\nu(X)=|X|^{\nu/2} B_\nu(X)$.

\begin{thm}\label{mmy}  Let $Y_t,\ t\ge 0$ be a Brownian motion in $\P$ with drift $\nu/2$ started at $I$,
and let $A_t=\int_0^t Y_s ds$.  Fix $X_0\in\P$, choose $\tilde A_0$ at random, independent of $Y$, according
to the distribution $\gamma_{X_0}(dA)$, and define 
$$\tilde Y_t=\tilde A_0 X_0^{1/2} Y_t X_0^{1/2} \tilde A_0,\qquad \tilde A_t = \tilde A_0 + \int_0^t \tilde Y_s\ ds.$$ 
Then $X_t=\tilde A_t^{-1} \tilde Y_t \tilde A_t^{-1},\ t\ge 0$ is a diffusion in $\P$ with infinitesimal generator
$$L_\nu=\Delta_X+2\tr(\vartheta_X\ln \beta_\nu(X)\ \vartheta_X),$$
started at $X_0$.
In particular, as a degenerate case, the process 
$A_t^{-1} Y_t A_t^{-1},\ t>0$ is a diffusion in $\P$ with infinitesimal generator $L_\nu$.
\end{thm}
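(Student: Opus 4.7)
The plan is to apply the general intertwining machinery developed just before the theorem to the specific choice $\varphi(Y)=|Y|^{\nu/2}$, and then recognise the explicit process in the statement as a concrete realisation of the $M^{(\varphi)}$-diffusion whose $X$-marginal is governed by $L_\nu$.

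First I would identify $\beta$ and $J^{(\beta)}$ for this $\varphi$. By \eqref{d2e}, $\varphi$ is a positive eigenfunction of $\Delta$ with eigenvalue $\l=n\nu^2/4$, and $\Delta_Y^{(\varphi)}$ is precisely the generator of a Brownian motion in $\P$ with drift $\nu/2$. A direct calculation from the definition of $D$ gives
\[
\beta(X)=D\varphi(X)=|X|^{\nu/2}\int_\P |A|^\nu \etr(-AX-A^{-1})\mu(dA)=|X|^{\nu/2}B_\nu(X)=\beta_\nu(X),
\]
so the $\gamma_X$ of the theorem coincides with the kernel $\beta(X)^{-1}\varphi(AXA)p(X,A)\mu(dA)$ of the intertwining setup, and is exactly the matrix GIG law with parameters $\nu$ and $(X,I)$. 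The product rule \eqref{pr}, combined with the eigenfunction identity $J\beta_\nu=\l\beta_\nu$ (itself a consequence of \eqref{int-duf}), then yields
\[
J^{(\beta_\nu)}=\beta_\nu^{-1}\circ(J-\l)\circ\beta_\nu=\Delta_X+2\tr(\vartheta_X\ln\beta_\nu\ \vartheta_X)=L_\nu,
\]
so the marginal generator predicted by the intertwining \eqref{int-my2} is exactly the one appearing in the statement.

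Next I would exhibit a concrete realisation of the $M^{(\varphi)}$-diffusion started from the joint law $\delta_{X_0}(dX)\gamma_{X_0}(dA)$. In the $(Y,A)$ coordinates the operator $M^{(\varphi)}=\Delta_Y^{(\varphi)}+\tr(Y\partial_A)$ is degenerate in the $A$-direction and decouples: $Y$ evolves autonomously as a Brownian motion in $\P$ with drift $\nu/2$, while $A$ satisfies the pathwise equation $dA_t=Y_t\,dt$. Since Brownian motion in $\P$ with drift $\nu/2$ is $GL(n,\R)$-invariant in law (both $\Delta$ and $\tr\vartheta_X$ being $GL(n)$-invariant), the conjugation $Y\mapsto a^tYa$ with $a=X_0^{1/2}\tilde A_0$ turns a Brownian motion with drift $\nu/2$ started from $I$ into one started from $\tilde A_0X_0\tilde A_0$; this is precisely the process $\tilde Y_t$ of the statement, while $\tilde A_t=\tilde A_0+\int_0^t\tilde Y_s\,ds$ is its corresponding $A$-coordinate. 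Converting back via $X=A^{-1}YA^{-1}$ produces exactly $X_t=\tilde A_t^{-1}\tilde Y_t\tilde A_t^{-1}$, and with $\tilde A_0\sim\gamma_{X_0}$ independent of $Y$ the joint initial law is $\delta_{X_0}\otimes\gamma_{X_0}$ as required.

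With this realisation in hand, the nondegenerate assertion is immediate from the probabilistic interpretation of \eqref{int-my2} (Theorem~\ref{mf}): the $X$-marginal of an $M^{(\varphi)}$-diffusion with initial law $\delta_{X_0}\otimes\gamma_{X_0}$ is, in its own filtration, a diffusion with generator $J^{(\beta_\nu)}=L_\nu$ started at $X_0$. For the degenerate case my plan is a strong Markov / entrance-law argument: applying the nondegenerate statement at the pair $(A_\epsilon^{-1}Y_\epsilon A_\epsilon^{-1},A_\epsilon)$ for small $\epsilon>0$, I would need to verify that its joint law has the factorised form $\rho_\epsilon(dX)\gamma_X(dA)$ required to invoke the Markov function theorem on $[\epsilon,\infty)$, and then send $\epsilon\downarrow 0$. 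This last step, together with the verification that the martingale problem for $M^{(\varphi)}$ is well posed for the relevant initial laws, is where I expect the main technical difficulty to lie; the rest is essentially a reindexing of formulas already established in the preceding discussion and in the cited appendices.
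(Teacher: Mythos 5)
Your treatment of the non-degenerate claim is essentially the paper's argument: identify $\beta=D\varphi=\beta_\nu$ and $J^{(\beta_\nu)}=L_\nu$ for $\varphi(Y)=|Y|^{\nu/2}$, realise the $M^{(\varphi)}$-diffusion with initial law $\delta_{X_0}(dX)\gamma_{X_0}(dA)$ by conjugating the driving Brownian motion by $a=X_0^{1/2}\tilde A_0$ (using $GL(n)$-invariance of $\Delta^{(\nu/2)}$), and then invoke the intertwining \eqref{int-my2} together with Theorem~\ref{mf}, with well-posedness supplied by Appendix~\ref{wpmp} (Example 5). That part is correct and matches the paper.

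The degenerate case is where your proposal has a genuine gap, and your own flag of "the main technical difficulty" is exactly the problem: the $\epsilon\downarrow 0$ plan requires verifying that the time-$\epsilon$ law of $(A_\epsilon^{-1}Y_\epsilon A_\epsilon^{-1},A_\epsilon)$, started from $(Y_0,A_0)=(I,0)$, factorises as $\rho_\epsilon(dX)\,\gamma_X(dA)$. But that factorisation is precisely the conditional-law (filtering) statement for the degenerate initial condition, i.e.\ the substance of what is to be proved; Theorem~\ref{mf} only delivers such conditional laws when the initial joint law already has the product form, and the degenerate start corresponds to no admissible $X_0\in\P$ (formally $X_0=\infty$), so nothing established so far gives you the time-$\epsilon$ factorisation. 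In the scalar case this identity is the nontrivial core of Matsumoto--Yor, so it cannot simply be "verified". The paper closes this instead by approximating in the initial condition rather than in time: take $X_0=mI$ in the non-degenerate statement and let $m\to\infty$. Then $\tilde A_0\sim\gamma_{mI}$ is matrix GIG with parameter $\alpha=m$, so Lemma~\ref{gig} gives $m^{1/2}\tilde A_0\to I$ in probability; hence $\tilde A_0X_0^{1/2}=m^{1/2}\tilde A_0\to I$, so $\tilde Y_t\to Y_t$, $\tilde A_t\to A_t$ and $X_t=\tilde A_t^{-1}\tilde Y_t\tilde A_t^{-1}\to A_t^{-1}Y_tA_t^{-1}$ for $t>0$, and since each prelimit process is an $L_\nu$-diffusion, so is the limit. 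Replacing your entrance-law step by this $X_0=mI$, $m\to\infty$ limit (via Lemma~\ref{gig}) repairs the argument.
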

\begin{proof} The relevant martingale problems are well posed, as shown in Appendix \ref{wpmp} (Example 5),
so the first claim follows from Theorem~\ref{mf}, as outlined above.
For the second, we can let $X_0=mI$ and consider the limit as $m\to\infty$.  By Lemma~\ref{gig}, 
$m^{1/2} \tilde A_0\to I$ in probability, as required.
\end{proof}

The second statement was proved, under the condition $2|\nu|>n-1$, by Rider and Valk\'o~\cite{rv16}.  Related results in the complex setting have been obtained 
by Bougerol~\cite{bougerol}.
In the case $n=1$, the above theorem is due to Matsumoto and Yor~\cite{my}, see also Baudoin~\cite{baudoin}.
We note that, as observed in~\cite{rv16}, the law of the process with generator $L_\nu$ is invariant under
a change of sign of the underlying drift $\nu$, since
$\beta_\nu=\beta_{-\nu}$, cf. \eqref{sym}.

More generally, if $\varphi$ is such that, as $X^{-1}\to 0$,
the measure $\gamma_X(dA)$ is concentrated around $AXA=I$,
and the relevant martingale problems are well-posed, then the corresponding statement should hold:  
if $Y_t$ is a Brownian motion in $\P$ with drift $\varphi$ and $A_t=\int_0^t Y_s ds$,
then $A_t^{-1} Y_t A_t^{-1},\ t>0$ is a diffusion in $\P$ with generator $J^{(\beta)}$.  

\section{Two particles with one-sided interaction and a `reflecting wall'}
Let $\nu\in\R$, and define
$$R_Q=\Delta_Q^{(\nu/2)} +\tr\partial_Q,\qquad N=R_Q   + \Delta_X^{(-\nu/2)} +2\tr(Q\partial_X).$$
We first note that $R$ is self-adjoint with respect to the measure
\be\label{rbm}
\pi(dQ)=|Q|^{\nu}\etr(-Q^{-1})\mu(dQ).
\ee
Define
$$S_X = \Delta_X^{(-\nu/2)} -\tr X^{-1},\qquad C f(X)=\int_\P \etr(-QX^{-1}) f(X,Q) \pi(dQ).$$
Then $R_Q \etr(-QX^{-1}) = S_X \etr(-QX^{-1})$, which implies $S \circ C = C \circ N$.

Now suppose $\rho$ is a positive eigenfunction of $R$ with eigenvalue $\l$ such that
$$\gamma(X)=\int_\P \rho(Q) \etr(-QX^{-1}) \pi(dQ)<\infty .$$
Then $\gamma$ is a positive eigenfunction of $S$ with eigenvalue $\l$.
Define
$$S^{(\gamma)} = \gamma(X)^{-1}\circ (S-\l) \circ \gamma(X),$$
$$R^{(\rho)}=\rho(Q)^{-1}\circ (R-\l) \circ \rho(Q),\qquad
N^{(\rho)}=\rho(Q)^{-1}\circ (N-\l) \circ \rho(Q),$$
$$(C_\rho f)(X) = \gamma(X)^{-1} \int_\P \rho(Q) \etr(-QX^{-1}) f(X,Q) \pi(dQ).$$
Then the above intertwining relation extends to
$$S^{(\gamma)} \circ C_\rho = C_\rho \circ N^{(\rho)}.$$
For example, if $\rho=1$, then $\gamma(X)=B_{\nu}(X^{-1})$.
We note that $S$ is related to the $J$ of the previous section,
via $S_X=J_Y^{(\varphi)}$, where $Y=X^{-1}$ and $\varphi(Y)=|Y|^{\nu/2}$.

These intertwining relations (and their probabilistic interpretations) may be viewed
as non-commutative, `positive temperature' versions of the following well known relation 
between reflecting Brownian motion and the three-dimensional
Bessel process: for appropriate initial conditions, a Brownian motion, reflected off
a [Brownian motion reflected at zero], is a three-dimensional Bessel process 
(\cite{C}, \cite[Prop. 3.5]{aow19}).  In the above, note that $\partial_Q=Q^{-1}\vartheta_Q$
and $Q\partial_X=QX^{-1}\vartheta_X$, so we may interpret the $Q$-process as a 
Brownian motion in $\P$ with `soft reflection off the identity', and the $X$-process as 
a second Brownian motion in $\P$ with `soft reflection off $Q$'. 

\section{Whittaker functions and related processes}\label{pN}

\subsection{Whittaker functions of several matrix arguments}\label{energy}

For $X=(X_1,\ldots,X_N)\in\P^N$ and $\nu\in\C$, we define
$$e_\nu(X) = \prod_{i=1}^N e_\nu(X_i)$$
and, for $X=(X_1,\ldots,X_N)\in\P^N$ and $\l\in\C^N$, 
$$e_\l(X)=\prod_{i=1}^N e_{\l_i}(X_i).$$
For $N\ge 1$, we define the product measure
$$\mu_N(dX)=\mu(dX_1)\ldots\mu(dX_N).$$

Let $\T=\P\times\P^2\times\cdots\times\P^N$ and, for $X\in\P^N$, 
denote by $\T(X)$ the set of $Y=(Y^1,\ldots,Y^N)\in\T$ such that $Y^N=X$.
For $Y\in\T$, define
\be\label{F}
\F(Y)=\sum_{1\le i\le m<N} \tr[Y^m_i(Y^{m+1}_i)^{-1}]+\tr[Y^{m+1}_{i+1}(Y^m_i)^{-1}].
\ee
For $Y\in\T$ and $\l\in\C^N$, define
$$e_\l(Y)=e_{\l_1}(Y^1) \prod_{2\le m\le N} e_{\l_m} (Y^m) \,e_{-\l_m}(Y^{m-1}).$$
\begin{prop}\label{wc}
Let $X\in\P^N$ and $\l\in\C^N$.
\begin{itemize}
\item[(i)] The following integral converges:
$$\psi_\l(X)=\int_{\T(X)} e_\l(Y) e^{-\F(Y)} \prod_{1\le m<N} \mu_m(dY^m).$$
\item[(ii)] The integrand $e_\l(Y) e^{-\F(Y)}$ vanishes as $Y\to\partial\T(X)$.
\end{itemize}
\end{prop}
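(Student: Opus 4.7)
I will induct on $N$, treating (i) and (ii) together. For $N=1$ the set $\T(X)$ is a single point, $\F$ is an empty sum, and $\psi_\l(X)=e_{\l_1}(X_1)$, so there is nothing to check. For $N\ge 2$ the integrand factorises by peeling off the topmost level of coupling: with $\l'=(\l_1,\ldots,\l_{N-1})$ and $Y^N=X$ fixed,
\begin{equation*}
e_\l(Y)\,e^{-\F(Y)}=e_{\l_N}(X)\,e_{-\l_N}(Y^{N-1})\,\exp(-\F_N(Y^{N-1},X))\cdot e_{\l'}(Y^{\le N-1})\,e^{-\F'(Y^{\le N-1})},
\end{equation*}
where $\F_N(Z,X):=\sum_{i=1}^{N-1}[\tr(Z_iX_i^{-1})+\tr(X_{i+1}Z_i^{-1})]$ and $(\F',e_{\l'})$ is the $(N-1)$-level system with top argument $Y^{N-1}$.

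For (i), replacing $\l$ by $\Re\l$ makes the integrand non-negative, so Tonelli permits iterated integration. By the inductive hypothesis the inner integral equals $\psi_{\Re\l'}(Y^{N-1})<\infty$, and absolute convergence reduces to the finiteness of
\begin{equation*}
\int_{\P^{N-1}}\psi_{\Re\l'}(Y)\,e_{-\Re\l_N}(Y)\,\exp(-\F_N(Y,X))\,\mu_{N-1}(dY).
\end{equation*}
Since $X\in\P^N$ is fixed, the bounds $\tr(Y_iX_i^{-1})\ge\lambda_{\min}(X_i^{-1})\tr Y_i$ and $\tr(X_{i+1}Y_i^{-1})\ge\lambda_{\min}(X_{i+1})\tr Y_i^{-1}$ give a linear-exponential decay $\exp(-c\sum_i[\tr Y_i+\tr Y_i^{-1}])$ for some $c=c(X)>0$. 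It then suffices to establish, inductively, a bound of the form
\begin{equation*}
\psi_{\Re\l'}(Y)\le C\prod_i(|Y_i|+|Y_i|^{-1})^A\exp\Bigl(C\sum_i\bigl[(\tr Y_i)^{1/n}+(\tr Y_i^{-1})^{1/n}\bigr]\Bigr),
\end{equation*}
with constants depending on $\l'$; the sub-exponential rate $(\tr Y_i)^{1/n}$ is exactly what the matrix-$K$-Bessel Laplace asymptotics recalled before Lemma~\ref{gig} produce after one integration step, and it propagates through the $N-2$ inductive steps. This rate is dominated by the outer linear rate $\tr Y_i$, so the outer integral converges absolutely.

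For (ii), consider any sequence $Y_k\to\partial\T(X)$. Passing to a subsequence, every eigenvalue of every $(Y_k)^m_i$ has a limit in $[0,\infty]$. Since $Y^N=X$ is fixed in the interior, there is a largest level $m^*<N$ at which some $(Y_k)^{m^*}_{i^*}$ degenerates. By maximality, $(Y_k)^{m^*+1}_{i^*}$ and $(Y_k)^{m^*+1}_{i^*+1}$ remain in a fixed compact subset of $\P$. If an eigenvalue of $(Y_k)^{m^*}_{i^*}$ tends to infinity, then $\tr((Y_k)^{m^*}_{i^*}((Y_k)^{m^*+1}_{i^*})^{-1})\to\infty$ at least linearly in that eigenvalue; if one tends to zero, then $\tr((Y_k)^{m^*+1}_{i^*+1}((Y_k)^{m^*}_{i^*})^{-1})\to\infty$ at the corresponding rate. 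Either way $\F(Y_k)\to\infty$ linearly in the escaping eigenvalue, while $|e_\l(Y_k)|$ only grows polynomially in the same eigenvalues via its determinant factors. Hence $e_\l(Y_k)e^{-\F(Y_k)}\to 0$.

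The main obstacle is the inductive growth bound on $\psi_{\Re\l'}$ used in (i): one has to verify that the compounding of the Laplace asymptotics of $B_\nu$ across the $N-2$ integration levels does not degrade the sub-exponential rate $(\tr Y_i)^{1/n}$. Everything else is bookkeeping.
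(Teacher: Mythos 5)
There is a genuine gap in both parts, and in each case it is the same missing ingredient: uniform control of the integrand in terms of the fixed top row $X$ alone. In part (i), after the Tonelli reduction your entire argument rests on the asserted bound $\psi_{\Re\l'}(Y)\le C\prod_i(|Y_i|+|Y_i|^{-1})^A\exp\bigl(C\sum_i[(\tr Y_i)^{1/n}+(\tr Y_i^{-1})^{1/n}]\bigr)$, which you never prove; the inductive hypothesis only gives pointwise finiteness of $\psi_{\Re\l'}$, not a quantitative bound, so this estimate is essentially the whole content of the convergence statement. The appeal to the Laplace asymptotics recalled before Lemma~\ref{gig} cannot supply it: those asymptotics are taken along rays $X=z^2M^2/2$ with $M$ fixed, they describe decay for large argument (and at rate $\exp(-\sqrt2\,\tr X^{1/2})$, i.e.\ exponent $1/2$, not $1/n$), whereas the dangerous regime for the outer integral is a degenerate argument, where $\psi$ blows up polynomially; and no argument is given that any such rate survives composition over $N-2$ levels. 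Moreover, in the scalar case $n=1$ your claimed growth rate is $\exp(C\tr Y_i)$ and the assertion that it ``is dominated by the outer linear rate $\tr Y_i$'' fails as stated, because the outer decay constant is $c(X)\sim\lambda_{\min}(X_i^{-1})\wedge\lambda_{\min}(X_{i+1})$, which for fixed but arbitrary $X$ can be smaller than the fixed constant $C$.

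In part (ii) the comparison is confined to the topmost degenerating level $m^*$: you show $\F(Y_k)\to\infty$ linearly in an escaping eigenvalue of $Y^{m^*}_{i^*}$ and claim $|e_\l(Y_k)|$ grows only polynomially ``in the same eigenvalues''. But $e_\l(Y)$ contains determinant factors of every entry $Y^m_i$, $m<N$, and entries at levels below $m^*$ may degenerate at an arbitrarily faster rate than anything at level $m^*$; the terms of $\F$ controlling them couple them only to the (also degenerating) level above, so your level-$m^*$ comparison does not control those factors. What closes both gaps is a chaining estimate, and this is the route the paper takes: grouping the terms of $\F$ into chains joining each internal entry $Y^m_i$ to the fixed matrices $X_i,X_j$, and applying Lemma~\ref{a2} (iterated use of $\tr AB^{-1}+\tr BC^{-1}\ge2\tr(A^{1/2}C^{-1/2})$), one gets $\F(Y)+D\ge C\sum_{1\le i\le m<N}\tr\bigl[X_\cdot^\alpha(Y^m_i)^{-\alpha}+(Y^m_i)^\alpha X_\cdot^{-\alpha}\bigr]$ with $X$ fixed. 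This yields at once a product bound $\psi_\l(X)\le\prod_{i<j}R_{ij}(X_i,X_j)$ by single-matrix integrals convergent by Lemma~\ref{lem-conv} (no inductive growth bound needed), and part (ii), since for fixed $A,B\in\P$, $a>0$, $b\in\R$ one has $\tr(AZ^a+BZ^{-a})+b\ln|Z|\to+\infty$ as $Z\to\partial\P$, uniformly controlling all levels simultaneously. I would either adopt this chaining bound or, if you wish to keep the induction, prove a genuinely uniform (e.g.\ polynomial in $\tr Y_i$, $\tr Y_i^{-1}$) bound on $\psi$ at each level; as written, the key estimates are asserted rather than established.
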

The proof is given in Appendix \ref{awc}.
We note that, when $N=2$, 
$$\psi_\l(X)=e_{\l_2}(X) K_n(\l_1-\l_2|\ X_1^{-1},X_2).$$
The following properties are straightforward to verify from the definition.
\begin{prop}\label{wmp}
Let $X\in\P^N$, $a\in GL(n)$, $\l\in\C^N$, $\nu\in\C$ and write
$\l'_i=\l_i+\nu$ and $X'=(X_N^{-1},\ldots,X_1^{-1})$.  Then
$$\psi_\l(X_1[a],\ldots,X_N [a])=|a^ta|^{\sum_i\l_i}\psi_\l(X),$$
$$\psi_{\l'}(X)=e_\nu(X)\psi_\l(X),\qquad \psi_\l(X)=\psi_{-\l}(X').$$
\end{prop}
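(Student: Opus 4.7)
The plan is to prove all three properties by carefully chosen changes of variables in the integral defining $\psi_\l$. As a preliminary step I would rewrite $e_\l(Y)$ in the telescoped form
$$e_\l(Y) = e_{\l_1-\l_2}(Y^1) \prod_{k=2}^{N-1} e_{\l_k-\l_{k+1}}(Y^k) \; e_{\l_N}(Y^N),$$
obtained by collecting, for each intermediate level $Y^k$, the exponents from $e_{\l_k}(Y^k)$ (the $m=k$ factor) and $e_{-\l_{k+1}}(Y^k)$ (the $m=k+1$ factor) in the definition of $e_\l(Y)$. Property (ii) is then immediate: under $\l_i \mapsto \l_i+\nu$ every consecutive difference $\l_k-\l_{k+1}$ is unchanged, so only the last factor shifts, contributing $e_\nu(Y^N) = e_\nu(X)$.

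For property (i), I substitute $Y^m_i \mapsto Y^m_i[a]$ in every integration variable, which rewrites the domain $\T(X_1[a],\ldots,X_N[a])$ as $\T(X)$. Each summand in $\F(Y)$ has the form $\tr[PQ^{-1}]$ with $P,Q$ among the $Y^m_i$; under the substitution this becomes $\tr[a^t P a (a^t Q a)^{-1}] = \tr[a^t P Q^{-1}(a^t)^{-1}] = \tr[PQ^{-1}]$, so $\F$ is invariant, and the measures $\mu_m$ are $GL(n)$-invariant by construction. The only new factor comes from $e_\l(Y)$: each $|Y^m_i|$ picks up $|a^ta|$, and tallying via the telescoped form gives
$$(\l_1-\l_2) + \sum_{k=2}^{N-1} k(\l_k-\l_{k+1}) + N\l_N = \sum_{i=1}^N \l_i,$$
which yields the overall factor $|a^ta|^{\sum_i \l_i}$.

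Property (iii) is the most delicate and requires the combined inversion-and-reversal substitution $Y^m_i = (Z^m_{m+1-i})^{-1}$. The boundary condition $Y^N_i = X_i$ becomes $Z^N_j = X_{N+1-j}^{-1} = X'_j$ after setting $j=N+1-i$. Setting $j = m+1-i$ at each level, the pair of $\F$-terms indexed by $(m,i)$ maps to $\tr[Z^{m+1}_{j+1}(Z^m_j)^{-1}] + \tr[Z^m_j (Z^{m+1}_j)^{-1}]$, and as $i$ runs over $1,\ldots,m$ so does $j$, so summing reproduces $\F(Z)$ exactly. Each factor $|Y^m_i|^\nu$ becomes $|Z^m_{m+1-i}|^{-\nu}$, so in the telescoped form every exponent flips sign, yielding $e_\l(Y) = e_{-\l}(Z)$. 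Finally, $\mu$ is invariant under $Y\mapsto Y^{-1}$ (the Jacobian $|Z|^{-(n+1)}$ of inversion on symmetric matrices is cancelled by the factor $|Y|^{-(n+1)/2}$ in $\mu$), and the within-level permutation leaves the product measure $\mu_m$ unchanged. Combining these ingredients gives $\psi_\l(X) = \psi_{-\l}(X')$.

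The main obstacle is the bookkeeping in (iii): one must check that the two types of trace terms in $\F$ (the `up-step' $Y^m_i(Y^{m+1}_i)^{-1}$ and the `down-step' $Y^{m+1}_{i+1}(Y^m_i)^{-1}$) swap roles consistently under the simultaneous matrix inversion and index reversal. All other ingredients reduce to $GL(n)$-invariance, the telescoping calculation above, and the self-duality of $\mu$ under inversion, which are standard features of the setup recalled in Section~2.
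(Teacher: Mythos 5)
Your verification is correct: the telescoped form of $e_\l(Y)$, the $GL(n)$-invariance of $\F$ and $\mu$ for the first two identities, and the inversion-plus-index-reversal substitution $Y^m_i=(Z^m_{m+1-i})^{-1}$ (with the self-duality of $\mu$ under inversion) for the third all check out, including the exponent tally $\sum_k k(\l_k-\l_{k+1})+N\l_N=\sum_i\l_i$. The paper offers no proof, calling these properties "straightforward to verify from the definition", and your argument is exactly that intended direct verification.
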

We also anticipate that $\psi_\l(X)$ is symmetric in the parameters $\l_1,\ldots,\l_N$;
in the case $N=2$, this symmetry holds and follows from \eqref{sym}.

\subsection{Interpretation as eigenfunctions}

Consider the differential operator 
\be\label{hnct}
H^{(N)}=\sum_{i=1}^N \Delta_{X_i} - V(X),\qquad
V(X)=2 \sum_{i=1}^{N-1} \tr(X_i^{-1}X_{i+1}).
\ee
This is a quantisation of the $N$-particle non-Abelian Toda lattice on $\P$.

For $\nu\in\C$ and $(X,Y)\in \P^N\times\P^{N-1}$ define
\be\label{kf}
Q^{(N)}_{\nu}(X,Y) = e_{\nu}(X)\, e_{-\nu}(Y)\, \etr\left(-
\sum_{i=1}^{N-1}  (Y_i X_i^{-1}+X_{i+1}Y_i^{-1}) \right) .
\ee
We identify $Q^{(N)}_\nu$ with the integral operator defined, for appropriate $f$, by
$$Q^{(N)}_\nu f(X)=\int_{\P^{N-1}} Q^{(N)}_\nu(X,Y) \,f(Y) \,\mu_{N-1}(dY).$$
Note that, for $\l\in\C^N$, 
\be\label{rec}
\psi^{(N)}_\l=Q^{(N)}_{\l_N} \psi^{(N-1)}_{\l_1,\ldots,\l_{N-1}},
\qquad\psi^{(1)}_\l(X)=|X|^\l.
\ee
It is straightforward to show that
$$(H^{(N)}_X-n\nu^2) \; Q^{(N)}_\nu(X,Y)=H^{(N-1)}_Y \; Q^{(N)}_\nu(X,Y),$$
with the convention $H^{(1)}=\Delta$.  
It follows that, on a suitable domain,
\be\label{int-nct}
(H^{(N)}-n\nu^2) \circ Q^{(N)}_\nu = Q^{(N)}_\nu\circ H^{(N-1)} .
\ee
For $\l\in\C^N$, set 
\be\label{hl}
H_\l = H^{(N)} - \sum_{i=1}^N n\l_i^2.
\ee
The intertwining relation \eqref{int-nct} implies that, for any $\l\in\C^N$, 
\be\label{ev-eqn}
H_\l \psi_\l = 0.
\ee
The integral representation of Proposition~\ref{wc} is a generalisation of the Givental-type formula obtained 
in~\cite{gklo} for the eigenfunctions of the quantum Toda lattice, also known as $GL(n)$-Whittaker functions.
We remark that a slightly richer family of eigenfunctions of $H^{(N)}$ can be obtained by taking
$\psi_\l^{(1)}$ to be an arbitrary eigenfunction of $\Delta$ in the recursive definition \eqref{rec},
provided the corresponding integrals converge.  

\subsection{Feynman-Kac interpretation}

Define Brownian motion in $\P^N$ with drift $\l\in\R^N$ to be the diffusion process in $\P^N$ 
with infinitesimal generator
$$\Delta_\l = \sum_{i=1}^N \Delta_{X_i}^{(\l_i)}.$$
We begin with a lemma.
\begin{lem}\label{duf}
let $Y$ be a Brownian motion in $\P^2$ with drift $\l$ started at $X$,
with $\nu=\l_1-\l_2>(n-1)/2$.  Then 
$$\int_0^\infty \tr[Y_1(t)^{-1} Y_2(t)] dt \el \tr(AW^{-1}) ,$$
where $A=X_1^{-1/2} X_2 X_1^{-1/2}$ and $W$ is a Wishart random matrix with parameters $I$ and $2\nu$.
\end{lem}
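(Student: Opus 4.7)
The plan is to show that both sides of the identity have the same Laplace transform, via the Feynman--Kac formula.

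By the $GL(n)$-invariance of Brownian motion in $\P$ (each $Y_i\mapsto a^tY_ia$ is a symmetry) together with the invariance of $\tr[Y_1^{-1}Y_2]$ under the same action, we may reduce to $X_1=I$ and $X_2=A$. A direct integration against the inverse-Wishart density, combined with the substitution $Y\to Y/c$ in the integral defining $K_n$, yields for $W\sim\mathrm{Wishart}(I,2\nu)$ and all $z\ge 0$,
$$\E\exp\bigl(-z\tr(AW^{-1})\bigr)=\Gamma_n(\nu)^{-1}B_{-\nu}(zA/2).$$

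To compute the same Laplace transform for the left-hand side, set $F_z(X_1,X_2)=\E_{X_1,X_2}\exp\bigl(-z\int_0^\infty\tr[Y_1^{-1}(t)Y_2(t)]\,dt\bigr)$. By the Feynman--Kac formula, $F_z$ is the bounded solution of
$$(\Delta^{(\l_1)}_{X_1}+\Delta^{(\l_2)}_{X_2})F_z=z\,\tr(X_1^{-1}X_2)F_z,\qquad F_z(X_1,X_2)\to 1\text{ as }X_2\to 0,$$
and I propose to verify that the candidate
$$\tilde F_z(X_1,X_2):=\Gamma_n(\nu)^{-1}\,B_{-\nu}\bigl(zX_1^{-1/2}X_2X_1^{-1/2}/2\bigr)$$
solves the same boundary value problem. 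The key input is the one-particle identity $(\Delta^{(-\nu/2)}_X-\tr X)B_{-\nu}(X)=0$ from \eqref{FK} (applied with $\varphi=|\cdot|^{-\nu/2}$). Using the chain rule \eqref{cr1}--\eqref{cr2} together with the $GL(n)$-invariance of $\tilde F_z$, derivatives with respect to $(X_1,X_2)$ are converted into derivatives with respect to $A=X_1^{-1/2}X_2X_1^{-1/2}$, and the two-particle PDE reduces to this one-particle identity at $X=zA/2$. The boundary value $\tilde F_z\to 1$ as $A\to 0$ follows from $B_{-\nu}(0)=\Gamma_n(\nu)$, which is in turn the statement that $\mu$ is invariant under $Y\mapsto Y^{-1}$.

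The hypothesis $\nu>(n-1)/2$ ensures almost-sure finiteness of the integral (via the Lyapunov-exponent computation in~\cite{rv16}, applied to the relative process $Y_1^{-1/2}Y_2Y_1^{-1/2}$) and hence uniqueness of the bounded Feynman--Kac solution; thus $F_z\equiv\tilde F_z$. Comparing Laplace transforms at $(I,A)$ yields the distributional identity. The main obstacle is the chain-rule verification that $\tilde F_z$ satisfies the two-particle PDE: although routine, the non-polynomial dependence of $A$ on $X_1$ through $X_1^{-1/2}$ makes the bookkeeping of cross terms between the $\Delta^{(\l_1)}_{X_1}$ and $\Delta^{(\l_2)}_{X_2}$ contributions moderately tedious.
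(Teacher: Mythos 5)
Your route---matching Laplace transforms via Feynman--Kac, with the explicit candidate $\tilde F_z(X_1,X_2)=\Gamma_n(\nu)^{-1}B_{-\nu}(zX_1^{-1/2}X_2X_1^{-1/2}/2)$---is genuinely different from the paper's proof (Appendix~\ref{duf-proof}), which works at the level of processes: writing $Y_i=G_i^tG_i$ with $G_i$ independent right-invariant Brownian motions with drifts $\l_i$, the eigenvalues of $Y_1^{-1/2}Y_2Y_1^{-1/2}$ coincide pathwise with those of $(G_2G_1^{-1})(G_2G_1^{-1})^t$, a Markov process with $O(n)$-invariant generator $\Delta^{(-\l_1)}+\Omega^{(\l_2)}$ ($\Omega$ as in Section~\ref{rcp}) whose radial part is that of $2\Delta^{(-\nu/2)}$; the lemma is then immediate from Corollary~\ref{mdr} and scaling. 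Your computation $\E\exp(-z\tr(AW^{-1}))=\Gamma_n(\nu)^{-1}B_{-\nu}(zA/2)$ is correct (it is \eqref{sym} in disguise), and your candidate does satisfy the two-particle equation; moreover the chain-rule bookkeeping you fear can be bypassed. Since $\Delta$ and $\tr\vartheta$ are invariant under congruence by a fixed matrix and under $X\mapsto X^{-1}$ (the latter with a sign change for $\tr\vartheta$), one gets $\Delta_{X_1}\tilde F_z=(\Delta g)(X_2^{1/2}X_1^{-1}X_2^{1/2})$, $\Delta_{X_2}\tilde F_z=(\Delta g)(X_1^{-1/2}X_2X_1^{-1/2})$, $\tr\vartheta_{X_1}\tilde F_z=-(\tr\vartheta g)(\cdot)$ and $\tr\vartheta_{X_2}\tilde F_z=+(\tr\vartheta g)(\cdot)$, where $g(W)=\Gamma_n(\nu)^{-1}B_{-\nu}(zW/2)$; because $g$ is orthogonally invariant and $\Delta$, $\tr\vartheta$ preserve orthogonal invariance, the two evaluation points (which share a spectrum) give equal values, so the left-hand side equals $2\Delta^{(-\nu/2)}g$ at $A$, and \eqref{FK} with $\varphi=e_{-\nu/2}$ gives exactly $z\tr(X_1^{-1}X_2)\tilde F_z$. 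Be aware that radiality of $B_{-\nu}$ is essential here: the relative matrix process is not itself a Brownian motion with drift, and the reduction fails for non-radial functions of $A$.

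The genuine gap is in the finiteness/uniqueness step. To run the argument of Proposition~\ref{fk-my} for all $\nu>(n-1)/2$ you need, almost surely, $\int_0^\infty\tr[Y_1(t)^{-1}Y_2(t)]\,dt<\infty$ and the relative spectrum tending to zero along paths (so that the bounded martingale $\tilde F_z(Y(t))\exp(-z\int_0^t\tr[Y_1^{-1}Y_2]\,ds)$ converges to $e^{-zZ}$). You justify this by citing the Lyapunov-exponent computation of \cite{rv16} ``applied to the relative process'', but that computation concerns a Brownian motion in $\P$ with drift; transferring it to $Y_1^{-1/2}Y_2Y_1^{-1/2}$ requires first identifying the law of its eigenvalue process, which is precisely the content of the paper's proof (the $G_2G_1^{-1}$ argument). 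A crude moment bound, $\E\tr[Y_1(t)^{-1}Y_2(t)]=e^{(n+1-2\nu)t}\tr(X_1^{-1}X_2)$ by independence and \eqref{d2ax}, only covers $\nu>(n+1)/2$, so the range $(n-1)/2<\nu\le(n+1)/2$ is not reached by what you have written. Either import the paper's identification of the relative eigenvalue process (at which point the Laplace-transform detour becomes optional), or supply an independent Lyapunov-exponent argument for the singular values of $G_2G_1^{-1}$ valid in the full range.
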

This follows from the matrix Dufresne identity~\cite[Theorem 1]{rv16}
(Corollary~\ref{mdr} in the present paper), together with the fact that the 
eigenvalue process of $Y_1(t)^{-1/2} Y_2(t) Y_1(t)^{-1/2}$ has the same law as that of a 
Brownian motion in $\P$ with generator $2\Delta-2\nu\, \tr \vartheta_{X}$, started at $A$.
A proof of the latter claim is given in Appendix~\ref{duf-proof}.

Now let $Y(t),\ t\ge 0$ be a Brownian motion in $\P^N$ with drift $\l\in\R^N$ started at $X$.
Suppose that $\l_i-\l_j>(n-1)/2$ for all $i<j$, and define
$$\varphi_\l(X)= \E \exp\left(-2\sum_{i=1}^{N-1} \int_0^\infty \tr[Y_i(t)^{-1} Y_{i+1}(t)] dt \right) .$$
\begin{prop}\label{fk}
Suppose that $\l_i-\l_j>(n-1)/2$ for all $i<j$.  Then
$$\psi_\l(X)=\prod_{i<j} \Gamma_n(\l_i-\l_j) \ e_\l(X) \varphi_\l(X).$$
Moreover, under these hypotheses, $\psi_\l(X)$ is the unique solution to
\eqref{ev-eqn}, up to a constant factor, such that $e_{-\l}(X) \psi_\l(X)$ is bounded. 
\end{prop}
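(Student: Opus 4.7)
The strategy is to convert \eqref{ev-eqn} into an elliptic PDE for $u:=e_{-\l}\psi_\l$ of the form $(\Delta_\l-V)u=0$, to recognize $\varphi_\l$ as another bounded solution via Feynman--Kac, to prove a uniqueness principle for such solutions up to a scalar, and finally to identify that scalar by induction on $N$.

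\emph{PDE matching.} The Doob relation $\Delta_X^{(\nu)}=|X|^{-\nu}\circ(\Delta_X-n\nu^2)\circ|X|^{\nu}$ is already recorded in Section~2 (taking $\varphi(X)=|X|^\nu$ in the general formula); applied coordinate-wise on $\P^N$ it gives $\Delta_\l-V=e_{-\l}\circ H_\l\circ e_\l$, so \eqref{ev-eqn} is equivalent to $(\Delta_\l-V)u=0$. On the probabilistic side, the coordinates of $Y$ are independent Brownian motions in $\P$ with drifts $\l_i$, and Lemma~\ref{duf} applied to each consecutive pair $(Y_i,Y_{i+1})$ (the gap condition $\l_i-\l_{i+1}>(n-1)/2$ being an immediate consequence of the hypothesis) gives $\int_0^\infty \tr[Y_i^{-1}Y_{i+1}]\,dt<\infty$ a.s., hence $\int_0^\infty V(Y_s)\,ds<\infty$ a.s. Standard Feynman--Kac theory then produces $(\Delta_\l-V)\varphi_\l=0$ with $0<\varphi_\l\le 1$.

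\emph{Uniqueness.} For any bounded solution $w$ of $(\Delta_\l-V)w=0$, the process $M_t:=w(Y_t)\exp(-\int_0^t V(Y_s)\,ds)$ is a bounded martingale, hence converges a.s.; since the exponential factor converges to a strictly positive limit, $w(Y_t)$ also converges a.s., to some $W_\infty$. Applied to $w=\varphi_\l$ the Markov property gives $\varphi_\l(Y_t)\to 1$ a.s., so the argument reduces to showing $W_\infty$ is a.s.\ deterministic. Under the hypothesis, the eigenvalues of $Y_i^{-1}Y_{i+1}$ decay exponentially to zero (indeed $(n-1)/2$ is exactly the critical gap coming from the log-eigenvalue asymptotics of drifted Brownian motion on $\P$ recalled in Section~\ref{my}), so the interactions die out for large $t$ and the components decouple, yielding tail triviality for $Y$. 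Boundedness of $u=e_{-\l}\psi_\l$ follows from the integral representation of Proposition~\ref{wc}, after reducing to $\l_N=0$ via Proposition~\ref{wmp} and estimating the innermost $K_n$-type integral. Hence $u=C_\l\varphi_\l$ for some constant $C_\l$, which in turn is the content of the uniqueness statement for $\psi_\l$.

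\emph{Computing the constant.} Finally I identify $C_\l=\prod_{i<j}\Gamma_n(\l_i-\l_j)$ by induction on $N$. The case $N=1$ is trivial: $\psi_\l^{(1)}=e_\l$, $\varphi_\l^{(1)}\equiv 1$ and the product is empty. For the inductive step, insert the inductive formula for $\psi^{(N-1)}_{\l_1,\ldots,\l_{N-1}}$ into the recursion \eqref{rec}, pull the already-known factor $e_\l(X)\prod_{i<j\le N-1}\Gamma_n(\l_i-\l_j)$ outside, and carry out the remaining $\mu_{N-1}$-integral one variable at a time: each integration is of the Burke type appearing in Theorem~\ref{ln-burke}, normalized by $\Gamma_n(\l_i-\l_N)$ as in \eqref{pia}. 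Collecting these $N-1$ factors produces the missing $\prod_{i<N}\Gamma_n(\l_i-\l_N)$ and closes the induction. The hardest step of the whole argument will be the tail-triviality claim used in the uniqueness paragraph; modulo that, the remaining steps are essentially bookkeeping.
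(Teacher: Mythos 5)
Your first step (the Doob/Feynman--Kac matching, $(\Delta_\l-V)\varphi_\l=0$, and $\varphi_\l(Y_t)\to1$ a.s.) agrees with the paper, but the uniqueness paragraph has a genuine gap: the tail $\sigma$-field of $Y$ is \emph{not} trivial when $n\ge 2$. Even for a single matrix Brownian motion with drift, the logarithmic eigenvalues separate at distinct linear rates (as recalled in Section~\ref{my}), so the eigenvector frame converges a.s.\ to a genuinely random, tail-measurable limit; equivalently, the Poisson (Furstenberg) boundary is nontrivial and there are many nonconstant bounded harmonic functions -- already for $n=2$, where $\P$ is essentially a hyperbolic plane times a line. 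Decoupling of the $N$ components does not help, since each component separately has a nontrivial tail. Hence ``$W_\infty$ is a.s.\ deterministic'' cannot be obtained from tail triviality, and this is exactly the step you flag as hardest. The paper avoids it by proving a boundary condition instead: $e_{-\l}(X)\psi_\l(X)$ is bounded and converges to $\prod_{i<j}\Gamma_n(\l_i-\l_j)$ as $V(X)\to 0$; since $V(Y_t)\to0$ a.s.\ (Lemma~\ref{duf}), the difference of two bounded solutions with the same boundary limit, multiplied by $\exp(-\int_0^t V(Y_s)\,ds)$, is a bounded martingale converging a.s.\ to $0$, hence vanishes, exactly as in the proof of Proposition~\ref{fk-my}. (Correspondingly, the uniqueness should be understood relative to this boundary behaviour.)

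The identification of the constant also does not close as you sketch it. After inserting the inductive formula for $\psi^{(N-1)}_{\l_1,\ldots,\l_{N-1}}$ into \eqref{rec}, the integrand still contains the non-explicit factor $\varphi^{(N-1)}_{\l_1,\ldots,\l_{N-1}}(Y)$, so the $\mu_{N-1}$-integral is not a product of Burke-type Gamma integrals; the factors $\Gamma_n(\l_i-\l_N)$ only emerge after a limit is taken. The paper's induction computes $\lim_{V(X)\to0}e_{-\l}(X)\psi_\l(X)$: change variables $Y_i\mapsto X_i^{1/2}Y_iX_i^{1/2}$, bound the resulting kernel by $e_{\l_1-\l_N,\ldots,\l_{N-1}-\l_N}(Y)\,\etr(-\sum_i Y_i)$ (which gives both the boundedness of $e_{-\l}\psi_\l$ and a dominating function with integral $\prod_{i<N}\Gamma_n(\l_i-\l_N)$), observe that $V(X)\to0$ forces $V^{(N-1)}(Y')\to0$ for each fixed $Y$, and apply dominated convergence together with the inductive limit $\prod_{i<j\le N-1}\Gamma_n(\l_i-\l_j)$. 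This single computation simultaneously identifies the constant and furnishes the boundary condition required for uniqueness; your proposal separates the two and, as written, supplies neither the limit argument nor a valid substitute for it.
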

\begin{proof}
Define
$$\varphi_\l(X)= \E \exp\left(-2\sum_{i=1}^{N-1} \int_0^\infty \tr[Y_i(t)^{-1} Y_{i+1}(t)] dt \right) .$$
It follows from Lemma~\ref{duf} that
$$\lim_{V(X)\to 0} \varphi_\l(X)= 1.$$
By Feynman-Kac, $(\Delta_\l-V)\varphi_\l=0$,
hence $f_\l=e_\l\varphi_\l$ satisfies the eigenvalue equation \eqref{ev-eqn}.
A standard martingale argument (as in the proof of \ref{fk-my}) then shows that $f_\l$ is the unique solution 
to \eqref{ev-eqn} such that $e_{-\l} f_\l$ is bounded and 
$$\lim_{X\to+\infty} e_{-\l} (X)f_\l(X)= 1.$$
It therefore suffices to show that $e_{-\l} \psi_\l$ is bounded and
$$\lim_{V(X)\to 0} e_{-\l}(X) \psi_\l(X) = \prod_{i<j} \Gamma_n(\l_i-\l_j).$$
We prove these statements by induction on $N$, using the recursion \eqref{rec}.

For $N=1$, the claim holds since $\psi_\l(X)=e_\l(X)$ in this case.
For $N\ge 2$, we have
$$\psi^{(N)}_\l(X)=\int_{\P^{N-1}} Q^{(N)}_{\l_N}(X,Y) \psi^{(N-1)}_{\l_1,\ldots,\l_{N-1}}(Y)
\mu_{N-1}(dY),$$
where
$$Q^{(N)}_{\nu}(X,Y) = e_{\nu}(X)\, e_{-\nu}(Y)\, \etr\left(-
\sum_{i=1}^{N-1} (Y_i X_i^{-1}+X_{i+1}Y_i^{-1}) \right) .$$
Let us write 
$$g^{(N)}_\l(X)=e_{-\l}(X) \psi^{(N)}_\l(X)$$
and
$$R_\l(Y)=e_{\l_1-\l_N,\ldots,\l_{N-1}-\l_N}(Y)\, \etr\left(-\sum_{i=1}^{N-1}
\left( Y_i +  A_i Y_i^{-1}\right) \right) ,$$
where $A_i=X_i^{-1/2} X_{i+1} X_i^{-1/2}$.
Changing variables from $Y_i$ to $X_i^{-1/2} Y_i X_i^{-1/2} $, we can write
$$g^{(N)}_\l(X) =\int_{\P^{N-1}} R_\l(Y) g^{(N-1)}_{\l_1,\ldots,\l_{N-1}}(Y') \mu_{N-1}(dY),$$
where $Y'_i=X_i^{1/2} Y_i X_i^{1/2}$.
By induction on $N$, we see immediately that
$$g^{(N)}_\l(X)\le \prod_{i<j} \Gamma_r(\l_i-\l_j).$$
Here we are using
$$R_\l(Y) \le e_{\l_1-\l_N,\ldots,\l_{N-1}-\l_N}(Y)\, \etr\left(-\sum_{i=1}^{N-1}  Y_i \right)  ,$$
which implies
$$\int_{\P^{N-1}} R_\l(Y) \mu_{N-1}(dY) 
\le \prod_{i=1}^{N-1} \Gamma_n(\l_i-\l_N).$$
Now observe that, for each $Y\in\P^{N-1}$, if $V(X)\to 0$ then $V^{(N-1)}(Y')\to 0$.
Thus the claim follows, again by induction, using the dominated convergence theorem.
\end{proof}

In the scalar case $n=1$, the above proposition is a special case of \cite[Proposition 5.1]{boc11},
see also~\cite[Corollary 3]{noc14}.

\subsection{Whittaker measures on $\P^N$}

The following generalises an integral identity due to Stade~\cite{stade}.
The proof is straightforward, by induction, using \eqref{rec}.
Denote by $e_n$ the unit vector $(0,0,\ldots,0,1)$ in $\C^n$.
\begin{prop}\label{stade} Let $s\in\C^n$, $A\in\P$ and $\l,\nu\in\C^N$.
Set $a=\sum_i(\l_i+\nu_i)$.  Assume that $\Re(\l_i+\nu_j)>(n-1)/2$
for all $i$ and $j$, and $\Re(s_k+\cdots+s_n+a)>(k-1)/2$ for all $k$.
Then
$$\int_{\P^N} p_s(X_1) \etr\left(-A^{-1}X_1\right)
 \psi_\l(X)\psi_\nu(X) \mu_N(dX)
= \frac{\Gamma_n(s+ae_n)}{\Gamma_n(a)} p_{s+ae_n}(A) \prod_{i,j} \Gamma_n(\l_i+\nu_j).$$
\end{prop}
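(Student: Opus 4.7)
The plan is induction on $N$ using the recursion \eqref{rec}.

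For the base case $N=1$: since $\psi_\l(X_1)\psi_\nu(X_1) = |X_1|^a$ with $a=\l_1+\nu_1$, the LHS reduces to $\int_\P p_{s+ae_n}(X_1)\etr(-A^{-1}X_1)\mu(dX_1)$. Writing $A = k^tk$ by Cholesky (with $k$ upper triangular, positive-diagonal) and substituting $X_1 = k^tYk$ exploits: $GL$-invariance of $\mu$; cyclicity giving $\tr(A^{-1}X_1)=\tr Y$; and the transformation rule $p_s(k^tYk) = p_s(A)p_s(Y)$, which follows from $|(k^tYk)^{(i)}| = (k_1\cdots k_i)^2|Y^{(i)}|$ for upper triangular $k$. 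The integral evaluates to $p_{s+ae_n}(A)\Gamma_n(s+ae_n)$, matching the RHS since $\prod_{i,j}\Gamma_n(\l_i+\nu_j) = \Gamma_n(\l_1+\nu_1) = \Gamma_n(a)$ cancels the denominator.

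For the inductive step ($N\ge 2$): apply \eqref{rec} to both Whittaker functions, introducing auxiliary variables $Y,Z\in\P^{N-1}$, and exchange the order of integration. The $X_N$-integration (via the Mellin identity) yields $\Gamma_n(\l_N+\nu_N)|Y_{N-1}|^{\l_N+\nu_N}|Z_{N-1}|^{\l_N+\nu_N}|Y_{N-1}+Z_{N-1}|^{-(\l_N+\nu_N)}$. Decouple $Y_{N-1}$ and $Z_{N-1}$ via the matrix-beta substitution $Y_{N-1} = k^tVk$, $Z_{N-1} = k^t(I-V)k$ with $W = k^tk$ (Cholesky), which gives the measure decomposition $\mu(dY_{N-1})\mu(dZ_{N-1}) = \mu(dW)|V|^{-(n+1)/2}|I-V|^{-(n+1)/2}dV$. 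Simultaneously rescale the remaining variables by $k$: $X_i\mapsto k^t\tilde X_ik$, $Y_i\mapsto k^t\tilde Y_ik$ for $i<N-1$, $Z_i\mapsto k^t\tilde Z_ik$. By $GL$-invariance of $\mu$, cyclicity of trace, multiplicativity $p_s(X_1)=p_s(\tilde X_1)p_s(W)$ under upper-triangular action, and the scaling property of $\psi$ (Proposition~\ref{wmp}, giving $\psi^{(N-1)}_{\l'}(Y) = |W|^{\sum_{i<N}\l_i}\psi^{(N-1)}_{\l'}(\tilde Y_1,\ldots,\tilde Y_{N-2},V)$, and analogously for $\psi^{(N-1)}_{\nu'}$), everything transforms cleanly and the $|W|$-powers from the various sources combine to give precisely $p_{s+ae_n}(W)$.

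The remaining integral then decomposes into: a $V$-integration coupled to the $\tilde Y,\tilde Z$ integrations, which by iterated matrix-beta integrals $\int|V|^{\alpha-(n+1)/2}|I-V|^{\beta-(n+1)/2}dV = \Gamma_n(\alpha)\Gamma_n(\beta)/\Gamma_n(\alpha+\beta)$ combined with the induction hypothesis at level $N-1$ applied to a suitably restructured integral in the tilded variables, yields both $\prod_{i,j<N}\Gamma_n(\l_i+\nu_j)$ and the cross-term factors $\prod_{i<N}\Gamma_n(\l_i+\nu_N)\prod_{j<N}\Gamma_n(\l_N+\nu_j)$; together with a final $W$-Mellin integration that reproduces $p_{s+ae_n}(A)$ and $\Gamma_n(s+ae_n)$. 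The main obstacle is the bookkeeping --- verifying that the total $|W|$-power equals $a$, that the accumulated Gamma factors combine to give exactly $\prod_{i,j=1}^N\Gamma_n(\l_i+\nu_j)/\Gamma_n(a)$, and that the extra $\etr(-\tr\tilde X_{N-1}^{-1})$ factor arising from $(Y_{N-1}+Z_{N-1})X_{N-1}^{-1}\mapsto\tr(\tilde X_{N-1}^{-1})$ under the Cholesky rescaling (which is not part of a pure level-$(N-1)$ Stade integrand) is correctly absorbed via an additional matrix $K$-Bessel integration. Using Cholesky $W = k^tk$ rather than $W^{1/2}$ is essential here because $p_s$ is multiplicative only under upper-triangular actions.
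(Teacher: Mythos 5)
Your base case is correct, and the opening moves of your inductive step check out: expanding both Whittaker functions via \eqref{rec}, integrating out $X_N$ to produce $\Gamma_n(\l_N+\nu_N)\,|Y_{N-1}|^{\l_N+\nu_N}|Z_{N-1}|^{\l_N+\nu_N}|Y_{N-1}+Z_{N-1}|^{-(\l_N+\nu_N)}$, the substitution $Y_{N-1}=k^tVk$, $Z_{N-1}=k^t(I-V)k$ with the stated measure decomposition, and the accounting showing that the accumulated $|W|$-power together with $p_s(W)$ gives $p_{s+ae_n}(W)$ are all correct. Indeed for $N=2$, where $\psi^{(1)}$ is just a determinant power, this scheme closes up completely: one integrates $V$ by the matrix beta integral, then $W$ against $\etr(-WX_1^{-1})$, then $X_1$ against $p_{s+ae_n}(X_1)\etr(-A^{-1}X_1)$, and the Gamma factors assemble as claimed.

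The gap is in the heart of the inductive step for $N\ge 3$: the claimed reduction to the induction hypothesis is asserted, not established, and it does not work as described. After decoupling only the top pair, the two level-$(N-1)$ Whittaker functions are evaluated at \emph{different} arguments, $(\tilde Y_1,\ldots,\tilde Y_{N-2},V)$ and $(\tilde Z_1,\ldots,\tilde Z_{N-2},I-V)$, and they remain coupled to each other through $\tilde X_1,\ldots,\tilde X_{N-1}$; the induction hypothesis concerns $\int_{\P^{N-1}} p_{s'}(U_1)\etr(-B^{-1}U_1)\psi_{\l'}(U)\psi_{\nu'}(U)\mu_{N-1}(dU)$ with a single common argument $U$, so neither it nor the matrix beta formula can be applied (the variable $V$ sits inside both $\psi^{(N-1)}$'s, so $\int |V|^{\alpha-(n+1)/2}|I-V|^{\beta-(n+1)/2}dV$ cannot be evaluated in isolation). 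Converting this coupled, different-argument integral into a same-argument Stade integral at level $N-1$ is essentially the whole content of the induction, and your sketch leaves it untouched. Two further symptoms of the same problem: (i) the global rescaling $X_i\mapsto k^t\tilde X_i k$ turns $\etr(-A^{-1}X_1)$ into $\etr(-A[k^{-1}]^{-1}\tilde X_1)$, whose dependence on $k$ is through the full Cholesky factor and not merely $|W|$, so the promised ``final $W$-Mellin integration'' yielding $\Gamma_n(s+ae_n)p_{s+ae_n}(A)$ does not detach from the inner integral (note that in the $N=2$ computation one does \emph{not} rescale $X_1$, which is exactly why the $A$-coupling stays clean there); (ii) the leftover $\etr(-\tilde X_{N-1}^{-1})$, which you propose to ``absorb via an additional matrix $K$-Bessel integration'', shows the remaining integral is simply not of level-$(N-1)$ Stade form, and no mechanism is given that makes it one. (Convergence and the justification of Fubini under the stated hypotheses are also never addressed, though that is minor by comparison.) The paper's indicated strategy is indeed induction on $N$ via \eqref{rec}, so you are on the intended track, but as written the inductive step is an outline of hoped-for bookkeeping rather than a proof; you would need either to carry out the decoupling simultaneously at all levels and show the cascade closes, or to formulate and prove an intermediate identity (e.g.\ a Baxter-type relation describing how the adjoint of the kernel $Q^{(N)}_{\nu_N}$ acts on $\psi^{(N)}_\l$ against the factor $p_s(X_1)\etr(-A^{-1}X_1)$) that produces one row $\prod_{i}\Gamma_n(\l_i+\nu_N)$ per step.
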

In particular, if $\l,\nu\in\R^N$ satisfy $\l_i+\nu_j>(n-1)/2$ for all $i,j$, then
$$W_{\l,\nu}(dX)
=\prod_{i,j} \Gamma_n(\l_i+\nu_j)^{-1} \etr(-X^{-1}_N) \psi_{-\l}(X)\psi_{-\nu}(X) \mu_N(dX)$$
is a probability measure on $\P^N$ which generalises the Whittaker measures of~\cite{cosz,osz}.
Here we have made the change of variables $X'=(X_N^{-1},\ldots,X_1^{-1})$,
as in Proposition~\ref{wmp}.  We remark that, by Proposition~\ref{stade}, for all $B$ such that $I+B\in\P$, we have
$$\int_{\P^N} \etr(-BX_N^{-1}) W_{\l,\nu}(dX) = |I+B|^{-a}.$$
This implies that the $N^{th}$ marginal of $W_{\l,\nu}$ is the inverse Wishart distribution
with parameters $\Sigma=I/2$ and $p=2a$, as defined in Section \ref{spd}.

\subsection{Triangular processes}

Consider the differential operator on $\T$ defined for $\l\in\C^N$ by
\be\label{G}
G_\l = \sum_{1\le i\le m\le N} \Delta_{Y^m_i}^{(\l_i)} 
+ 2\tr[( \epsilon_{im} Y^{m-1}_i (Y^m_i)^{-1}- \epsilon_{1i} Y^m_i (Y^{m-1}_{i-1})^{-1}) \vartheta_{Y^m_i}] ,\ee
where $\epsilon_{ij}=1-\delta_{ij}$.

Define a kernel from $\P^N$ to $\T$ by
$$\Sigma_\l(X,dY)= \delta_X(dY^N)  e_\l(Y) e^{-\F(Y)} \prod_{1\le m<N} \mu_m(dY^m) ,$$
and note that
$$\psi_\l(X) = \int_\T \Sigma_\l(X,dY) .$$
Then the following intertwining relation holds:
\be\label{int-hg}
H_\l \circ \Sigma_\l = \Sigma_\l \circ G_\l .
\ee
This extends \eqref{int-nct} and is readily verified by induction.
Define
\be\label{L}
L_\l = \psi_\l(X)^{-1} \circ H_\l  \circ \psi_\l(X).
\ee
As shown in Appendix~\ref{wpmp} (Example 6), for any $\l\in\R^N$, the martingale problems associated
with $G_\l$ and $L_\l$ are well posed, for any initial conditions.  We thus deduce from the
intertwining relation \eqref{int-hg}, and Theorem~\ref{mf}, the following theorem.

 \begin{thm}\label{QNCT}
Let $\l\in\R^N$ and $X\in\P^N$, and suppose that $Y(t),t\ge 0$ is a diffusion in $\T$ 
with generator $G_\l$ and initial law 
$$\sigma_\l(X,dY) = \psi_\l(X)^{-1} \Sigma_\l(X,dY).$$
Then $Y^N(t),\ t\ge 0$ is a diffusion in $\P^N$ 
with generator $L_\l$.
Moreover, for each $t\ge 0$, the conditional law of 
$Y(t)$, given $Y^N(s),\ s\le t$, is $\sigma_\l(Y^N(t),dY)$.
\end{thm}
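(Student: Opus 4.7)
The plan is to apply the Markov-functions criterion of Theorem~\ref{mf} to the projection $\pi:\T\to\P^N$, $Y\mapsto Y^N$, with conditional kernel $\sigma_\l(X,\cdot)$. Three inputs are needed: (a) $\sigma_\l$ is a genuine probability kernel, (b) the intertwining $L_\l\circ\Pi_\l = \Pi_\l\circ G_\l$ holds (where $\Pi_\l f(X):=\int_\T f(Y)\,\sigma_\l(X,dY)$), and (c) the martingale problems for $G_\l$ and $L_\l$ are well posed.

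For (a), Proposition~\ref{wc}(i) gives $\int_{\T(X)} e_\l(Y)e^{-\F(Y)}\prod_{1\le m<N}\mu_m(dY^m)=\psi_\l(X)$, so that $\sigma_\l(X,\T)=1$, and strict positivity of $\psi_\l$ on $\P^N$ makes the Doob transform defining $L_\l$ in \eqref{L} well defined. For (b) I would start from the unnormalised intertwining \eqref{int-hg}. Since $H_\l\psi_\l=0$ by \eqref{ev-eqn}, the conjugation $L_\l=\psi_\l^{-1}\circ H_\l\circ\psi_\l$ is a bona fide Doob $h$-transform with no residual potential. Dividing both sides of \eqref{int-hg} by $\psi_\l(X)$ and using $\Pi_\l=\psi_\l^{-1}\Sigma_\l$ then gives
$$L_\l\circ\Pi_\l \;=\; \psi_\l^{-1}H_\l\psi_\l\cdot\Pi_\l \;=\; \psi_\l^{-1}H_\l\Sigma_\l \;=\; \psi_\l^{-1}\Sigma_\l G_\l \;=\; \Pi_\l\circ G_\l,$$
as required. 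The intertwining \eqref{int-hg} itself I would verify by induction on $N$, using the recursion \eqref{rec} together with the one-step intertwining \eqref{int-nct}.

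Input (c) is supplied by Appendix~\ref{wpmp} (Example~6), where well-posedness of the martingale problems for $G_\l$ and $L_\l$ on their respective state spaces is established for every $\l\in\R^N$ and every initial condition. With (a)--(c) in hand, Theorem~\ref{mf} delivers both claims simultaneously: $Y^N(t)$ is Markov with generator $L_\l$, and the conditional law of $Y(t)$ given $Y^N(s),\ s\le t,$ equals $\sigma_\l(Y^N(t),\cdot)$. The main obstacle is really (c); once the algebraic identity \eqref{int-hg} is in hand, the rest is formal, but well-posedness at the degenerate strata of $\T$ (where some coordinates of $Y$ may blow up or collapse) is the genuinely analytic point, which is why that verification is outsourced to the appendix.
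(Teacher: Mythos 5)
Your proposal is correct and follows essentially the same route as the paper: the unnormalised intertwining \eqref{int-hg} (verified inductively via \eqref{rec} and \eqref{int-nct}), conjugated by $\psi_\l$ using $H_\l\psi_\l=0$ to obtain the normalised relation for the Doob-transformed generator $L_\l$, combined with well-posedness of the martingale problems for $G_\l$ and $L_\l$ from Appendix~\ref{wpmp} (Example 6) and an application of Kurtz's criterion, Theorem~\ref{mf}. Your additional remark that Proposition~\ref{wc}(i) makes $\sigma_\l$ a genuine probability kernel is exactly the normalisation the paper uses implicitly, so there is nothing to correct.
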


\begin{rem} Theorem~\ref{QNCT} is a matrix generalisation of Proposition 9.1 in \cite{noc12}.
It is not clear whether Theorem 3.1 in \cite{noc12} admits a similar generalisation, other 
than in the case $N=2$, where it is given by the statement of Theorem~\ref{mmy}. \end{rem}

We note that $G_\l$ contains the autonomous
$$T_\l = \Delta_\l
+ 2\sum_{i=2}^N \tr[X_{i-1} \partial_{X_i}],\qquad
T'_\l=\Delta_\l
- 2\sum_{i=2}^N \tr[X_i X_{i-1}^{-1} \vartheta_{X_i}].$$
These are special cases of the diffusions with one-sided interactions discussed 
in Section \ref{onesided},
see \eqref{t} and \eqref{tprime}.
It follows from Theorem~\ref{QNCT} that, if $X(t),\ t\ge 0$ is a diffusion in $\P^N$ with generator $T_\l$ (resp. $T'_\l$),
with appropriate initial conditions, then $X_N(t),\ t\ge 0$ is distributed as the first (resp. last) coordinate of a diffusion in $\P^N$ 
with generator $L_\l$.  In the scalar case, this yields very precise information about the law of $X_N(t)$, for special
initial conditions, see for example \cite[Corollary 4.1]{noc12}.  We note however that this application in the scalar case
relies on the Plancherel theory for the quantum Toda lattice, currently unavailable for its non-Abelian generalisation \eqref{hnct}.
In the scalar case, for a particular (singular) initial condition, the random variable $X_N(t)$ may be interpreted as the
logarithmic partition function of the semi-discrete directed polymer in a Brownian environment introduced in~\cite{oy1}.
Unfortunately this interpretation does not extend to the non-Abelian setting, other than in an intuitive sense,
because the SDE's do not admit an explicit solution as an integral in this case.  Nevertheless, to understand the law of $X_N(t)$ 
when $n>1$, and also its asymptotic behaviour as $N,t\to\infty$, seems to be an interesting topic for future research.

\section{The non-Abelian Toda lattice}

The non-Abelian Toda lattice is a Hamiltonian system which describes the evolution of a system
of particles $X_1,\ldots,X_N$ in the space of invertible $n\times n$ matrices.
There is a standard version, introduced by A.M. Polyakov~\cite{bmrl80},
which generalises the classical Toda lattice.
There is also an {\em indefinite} version, in which the potential has the
opposite sign, as considered by Popowicz~\cite{pop81,pop83}.  
As in the scalar case~\cite{noc13,noc15}, 
it is the indefinite version which is relevant to our setting.

Writing $A_i=X_{i+1} X_i^{-1} $ and $B_i=\dot{X}_i X_i^{-1}$, the Hamiltonian
is given by
$$H = \tr\left( \frac12\sum_{i=1}^N B_i^2 - \sum_{i=1}^{N-1} A_i\right) ,$$
and the equations of motion are
\be\label{emab}
\dot{B}_i = A_{i-1}-A_i ,\qquad \dot{A}_i=B_{i+1} A_i - A_i B_i  .
\ee
This system admits the Lax representation $\dot L=[L,M]$, where
\be
L=\begin{pmatrix} B_1 & -1 & 0 &  \cdots & 0\\
A_1 & B_2 & -1 &  \cdots & 0\\
0 & A_2 & \smash{\ddots} &  \smash{\ddots} &  \smash{\vdots}\\
\smash{\vdots}&&&B_{N-1}&-1\\
0&\dots&0&A_{N-1}&B_N \end{pmatrix},
\qquad
M=\begin{pmatrix} 0 & 0    &\dots& 0\\
A_1 & 0   &\dots& 0\\
\vdots   & \ddots & \ddots & \vdots\\
0&\dots&A_{N-1}&0 \end{pmatrix}.
\ee
Here we are using the notation $[L,M]=LM-ML$, where multiplication
of matrices with matrix-valued entries is ordered, that is,
$$(LM)_{ij}=\sum_k L_{ik}M_{kj}.$$
The Lax representation implies that, for each positive integer $k$,
$C_k=\sum_i (L^k)_{ii}$ is a constant of motion for the system, that is $\dot{C}_k=0$.
Note that these constants of motion are matrix-valued.
In the following we will use the notation $L=L^{(N)}(A,B)$ for the above Lax matrix,
and $C^{(N)}_k$ for the corresponding constants of motion.

The equations of motion \eqref{emab} can be written, equivalently, as
\be\label{emxp}
\dot{X}_i=P_i,\qquad \dot{P}_i=P_iX_i^{-1}P_i+X_iX_{i-1}^{-1}X_i-X_{i+1}.
\ee
In the scalar case, writing $X_i=e^{x_i}$, these reduce to the (indefinite) Toda equations
$$\ddot{x}_i = e^{x_i-x_{i-1}}-e^{x_{i+1}-x_i}.$$
Observe that the space $\P^N\times \S^N$ is invariant under the evolution \eqref{emxp}, 
where $\S$ denotes the set of real symmetric $n\times n$ matrices.
The system therefore admits a natural quantisation in this space, with Hamiltonian
given by \eqref{hnct}.

The diffusion with generator $G_\l$, defined by \eqref{G} with $\l\in\R^N$, is in fact a stochastic 
version of a series of B\"acklund transformations between non-Abelian Toda systems with 
different numbers of particles, as we shall now explain.  

Recall the kernel function $Q^{(N)}_{\nu}(X,Y)$ defined by \eqref{kf},
and consider the following evolution in $\P^N\times\P^{N-1}$:
\be\label{XE}
B_i=\dot{X}_i X_i^{-1} = \vartheta_{X_i} \ln Q^{(N)}_{\nu}(X,Y)=\nu+Y_i X_i^{-1}-X_i Y_{i-1}^{-1},\ 1\le i\le N,
\ee
\be\label{YE}
B_i'=\dot{Y}_i Y_i^{-1} = - \vartheta_{Y_i} \ln Q^{(N)}_{\nu}(X,Y)=\nu +Y_i X_i^{-1}- X_{i+1}Y_i^{-1},\ 1\le i\le N-1.
\ee
Here we are using the conventions $X_{N+1}=Y_N=Y_0^{-1}=0$.
If \eqref{XE} and \eqref{YE} hold, then it is straightforward to compute
\be\label{back}
\dot B_i=A_{i-1}-A_i,\ 1\le i\le N,\qquad \dot B_i'=A'_{i-1}-A'_i,\ 1\le i\le N-1,
\ee
where $A_i=X_{i+1} X_i^{-1} $, $A_i'=Y_{i+1} Y_i^{-1}$, with conventions $A_0=A_N=0$
and $A'_0=A'_{N-1}=0$.
As such, this defines a B\"acklund transformation between the $N$- and $(N-1)$-particle systems.
In the case $\nu=0$, it can be seen as a degeneration of the auto-B\"acklund transformation 
for the semi-infinite non-Abelian Toda lattice described in the paper~\cite{pop83}.
Moreover, the constants of motion are related by $C^{(N)}_k=C^{(N-1)}_k+\nu^k I$.
This follows from the relation, readily verified from~\eqref{XE} and \eqref{YE}:
\be\label{dressing}
L^{(N)}(A,B)\, D(X,Y) = D(X,Y)\, \hat L^{(N-1)}(A',B',\nu),
\ee
where 
$$\hat L^{(N-1)}(A',B',\nu)=\begin{pmatrix} 
&&&0\\
& L^{(N-1)}(A',B')&&  \vdots\\
&&&  0\\
&&&-1\\
0&\cdots&0&\nu I \end{pmatrix},$$
and
$$D(X,Y)=\begin{pmatrix} 1 & 0 &  \cdots & 0&0\\
X_2 Y_1^{-1} & 1 &   \cdots & 0&0\\
0 & X_3 Y_2^{-1} & \smash{\ddots} &    \smash{\vdots}&    \smash{\vdots}\\
\smash{\vdots}&&&1&0\\
0&\dots&0&X_N Y_{N-1}^{-1}&1 \end{pmatrix}.$$

This B\"acklund transformation may be used to construct solutions for the 
$N$-particle system, recursively, as follows.  
Let $\T$, $\T(X)$, $\F(Y)$ and $e_\l(Y)$ be defined as in Section \ref{energy}.
Let $\l\in\R^n$ and set $\F_\l(Y)=\F(Y)-\ln e_\l(Y)$. 
By Proposition~\ref{wc} (ii),  $\F_\l(Y)\to+\infty$ as $Y\to\partial\T(X)$, 
hence there exists $Y^*(\l,X)\in \T(X)$ at which $\F_\l(Y)$ achieves its minimum value on $\T(X)$.
Moreover, this minimiser must satisfy
\be\label{cpe}
\vartheta_{Y^m_i} \F_\l(Y) = 0,\qquad 1\le i\le m <N.
\ee
Equivalently,
$$\l_m + Y^{m-1}_i (Y^m_i)^{-1} - Y^m_i (Y^{m-1}_{i-1})^{-1}
= \l_{m+1} + Y^m_i (Y^{m+1}_i)^{-1} - Y^{m+1}_{i+1} (Y^{m}_{i})^{-1},$$
with the conventions $Y^{m-1}_m=0$ and $(Y^{m-1}_{0})^{-1} =0$.
The equations with $2\le m\le N-1$ are equivalent to
$$\vartheta_{Y^m_i} \ln Q^{(m)}_{\l_m}(Y^m,Y^{m-1})=- \vartheta_{Y^m_i} \ln Q^{(m+1)}_{\l_{m+1}}(Y^{m+1},Y^m).$$
Denote by $\T_\l$ the set of $Y\in\T$ which satisfy the critical point equations \eqref{cpe},
and consider the evolution on $\T$ defined by $B^1_1=\dot Y^1_1 (Y^1_1)^{-1}=\l_1$ and, for $2\le m\le N$,
\be\label{eT}
B^m_i = \dot Y^m_i  (Y^m_i)^{-1} = \vartheta_{Y^m_i} \ln Q^{(m)}_{\l_m}(Y^m,Y^{m-1}),\qquad 1\le i\le m\le N.
\ee
Note that this is equivalent to
\be
B^m_i = \l_m + Y^{m-1}_i (Y^m_i)^{-1} - Y^m_i (Y^{m-1}_{i-1})^{-1} ,\qquad
1\le i\le m\le N,
\ee
with the conventions, as above,  $Y^{m-1}_m=0$ and $(Y^{m-1}_{0})^{-1} =0$.
It corresponds precisely to the drift term of the diffusion in $\T$
with infinitesimal generator $G_\l/2$.  

It follows from \eqref{back} that $\T_\l$ is invariant under the evolution~\eqref{eT},
as in the scalar ($n=1$) case \cite[Proposition 8.1]{noc13}.
Thus, if $Y(0)=Y^*(\l,X)$ and we let $Y(t)$ evolve according to \eqref{eT}, 
then $Y^N(t),\ t\ge 0$ is a realisation of the $N$-particle non-Abelian 
Toda flow on $\P^N$, with $Y^N(0)=X$ and $C^{(N)}_k=\sum_i \l_i^k I$.
In the scalar case, this agrees with the statement of \cite[Theorem 8.4]{noc13}.

To conclude, Theorem~\ref{QNCT} says that if we add noise to the evolution~\eqref{eT}, 
and choose the initial law on $\T$ appropriately, then $Y^N(t),\ t\ge 0$ evolves as a 
diffusion in $\P^N$ with generator given by a Doob transform of the 
quantised Hamiltonian \eqref{hnct}.

\section{A related class of processes}\label{rcp}

Let $G$ be a right-invariant Brownian motion in $GL(n)$, satisfying $\partial G = \partial\beta\, G$,
where $\beta$ is a Brownian motion in $\mathfrak{gl}(n,\R)$ with each entry having infinitesimal variance $1/2$.
Then $Y=G^tG$ is a Brownian motion in $\P$.  As the evolution of $Y$ is governed by $\Delta$, its law is invariant 
under the action of $GL(n)$ on $\P$.

Norris, Rogers and Williams~\cite{nrw} consider the closely related Markov
process $X=GG^t$.  This process has the same eigenvalues as $Y$, but its eigenvectors 
behave quite differently.  It satisfies $\partial X = \partial\beta\, X+X\, \partial\beta^t$, from which the 
Markov property is evident, and one may compute its infinitesimal generator:
$$\Omega_X=\frac12\tr(\vartheta_X\vartheta_X'+\vartheta_X'\vartheta_X)
=\tr(X^2\partial_X^2)+\frac12\tr\vartheta_X + \frac12 \tr(X)\tr\partial_X.$$
Here, $\vartheta_X' f = (\partial_X f)X$.
The differential operator $\Omega$ is easily seen to be $O(n)$-invariant, 
but not $GL(n)$-invariant.
Nevertheless, it bears many similarities to the Laplacian.  For example, it agrees with
the Laplacian when applied to radial functions on $\P$, as can be seen by noting that 
$\vartheta_X' \tr X^k = \vartheta_X \tr X^k$, for all positive integers $k$ (cf. \eqref{txk}).
It is self-adjoint with respect to $\mu$,
and invariant under the change of variables $Y=X^{-1}$.  If $k(X,Y)=\etr(-YX^{-1})$,
then $\Omega_Xk=\Omega_Y k$.  An important difference is the associated 
product rule, cf. \eqref{pr}:
\be\label{opr}
\Omega(fg)=(\Omega f)g+f (\Omega g)+\tr[X^2(\partial_Xf )(\partial_X g)]+\tr[X^2(\partial_Xg)( \partial_X f)].
\ee

Many of the diffusions we have considered have natural analogues in which 
the underlying motion of particles is governed by $\Omega$ rather than $\Delta$.
Let $G$ be a right-invariant Brownian motion in $GL(n)$ with drift $\nu/2$, started at $I$.
Then $Y=G^t G$ is a Brownian motion in $\P$ with drift $\nu/2$, and
$X=GG^t$ is a diffusion in $\P$ with generator 
$$\Omega_X^{(\nu/2)}=\Omega_X+\nu\tr\vartheta_X.$$
We note that this is the Doob transform of $\Omega_X$ via the positive eigenfunction $|X|^{\nu/2}$.

Consider the process in $\P$ defined by $Q = G (Q_0+A) G^{t}$, where $A_t=\int_0^t Y_s^{-1} ds$ 
and $Q_0$ is independent of $G$.  This is a diffusion in $\P$ with infinitesimal generator
$$\cR=\Omega_X^{(\nu/2)}+\tr\partial_Q.$$
This process (with a different normalisation) was studied in \cite{rv16},
where it was observed that $\cR$ is self-adjoint with respect to the measure \eqref{rbm}, as
in the case of a Brownian particle.

For several particles with one-sided interactions, the interactions need to be modified on 
account of the product rule \eqref{opr}.  For example, we may consider
$$\T = \Omega^{(\l)}_Y+\Omega_X +\tr[(XYX^{-1}+X^{-1}YX)\partial_X].$$
Assume $2\l>n-1$.  Then $\Omega^{(\l)} \circ K_\l = K_\l \circ \T$,
where $(K_\l f)(X)= \int_\P f(X,Y) \Pi_\l(X,dY)$ and $\Pi_\l$ is defined by \eqref{pia}.
Assuming the associated martingale problem is well posed, this yields the following
analogue of the `Burke' Theorem~\ref{ln-burke}:
if $(X_t,Y_t)$ be a diffusion in $\P^2$ with generator $\T$
and initial law $\delta_{X_0}(dX) \Pi_{\l}(X,dY)$ then, with respect to its own filtration,  
$X_t$ is a diffusion in $\P$ with generator $\Omega^{(\l)}_X$, started at $X_0$;
moreover, the conditional law of $Y_t$, given $X_s,\ s\le t$, only depends on $X_t$
and is given by $ \Pi_{\l}(X_t,dY)$. 

\section{The complex case}

Most of the discussion in this paper carries over naturally to the complex setting.
We remark in particular that the matrix Dufresne identity and related $2M-X$ theorem are studied
in some detail in the complex setting by Rider and Valk\'o~\cite{rv16} and Bougerol~\cite{bougerol}.
In this section, we briefly outline how the framework developed in this paper may be extended to the 
complex setting, with particular emphasis on a complex version of the intertwining relation \eqref{int-duf} 
and some of its consequences.  We will also briefly indicate how this relates to a remarkable identity
of Fitzgerald and Warren~\cite{fw} concerning Brownian motion in a Weyl chamber,
and closely related work of Nguyen and Remenik~\cite{nr17}
and Liechty, Nguyen and Remenik~\cite{lnr}
on non-intersecting Brownian bridges.

In this section, $\P$ will denote the space of positive $n\times n$ Hermitian matrices.  
For $a\in GL(n,\C)$ and $X\in\P$, write $X[a]=a^\dagger X a$.  
This defines an action of $GL(n,\C)$ on $\P$.
The $GL(n,\C)$-invariant volume element on $\P$ is given by $\mu(dX)=|X|^{-n} dX$, 
where $dX$ denotes the Lebesgue measure on $\P$.
We note that, writing $A=a[k]$, where $k\in U(n)$ and $a$ is the diagonal matrix with entries given by the eigenvalues
$z_1,\ldots,z_n$ of $A$, with $z\in C_+=\{z_1>\cdots>z_n>0\}$, we have the decomposition, on $U(n)\times\R_+^n$,
\be\label{mu}
\mu(dA) = \pi^{n(n-1)/2} \left(\prod_{j=1}^{n-1} j!\right)^{-1} dk \prod_{i<j}(z_i-z_j)^2 \prod_i  z_i^{-n} dz_i.
\ee

The Laplacian on $\P$ may be characterised (see, for example, \cite{nomura}) as
the unique invariant differential operator on $\P$ which satisfies
\be\label{laplace}
\Delta_X \etr(X) =  (n\tr X + \tr X^2) \etr(X).
\ee
Let us define a `partial matrix derivative' $\partial_X$ on $\P$, writing $X=(x_{ij})$, by
$$(\partial_X)_{ij} = \begin{cases} \frac{\partial}{\partial x_{ii}} & i=j\\
\frac{\partial}{\partial \bar{x}_{ij}} & i\ne j\end{cases}$$
where
$$\frac{\partial}{\partial \bar{z}} = 
\frac12\left(\frac{\partial}{\partial \Re z}+\sqrt{-1} \frac{\partial}{\partial \Im z}\right) .$$
If $Y=X[a]$ for some fixed $a\in GL(n)$, then 
$ \partial_X=a\, \partial_Y \, a^\dagger$.
This implies that $\tr \vartheta_X^2$ is $GL(n,\C)$-invariant, where $\vartheta_X = X \partial_X$.
One can check that $ \partial_X \tr(X)=I$ and $\partial_X X=n I$.  This implies
that $\tr \vartheta_X^2$ satisfies \eqref{laplace} and hence
$\Delta_X = \tr \vartheta_X^2$, as in the real case.

In this setting, the spherical functions are defined as follows.  
For $\l,x\in\C^n$, define $a_\l(x)=\det(x_i^{\l_j})$ and
$s_\l(x)=a_{\delta+\l}(x)/a_\delta(x)$, where $\delta_i=n-i$.
For $X\in\P$,
define $s_\l(X)$ to be the function $s_\l$ evaluated at the eigenvalues of $X$.

We define Brownian motion in $\P$ to be the diffusion process
with generator $\Delta/2$.  If $\varphi$ is a positive eigenfunction
of the Laplacian with eigenvalue $\gamma$, we may consider the 
corresponding Doob transform $\Delta_X^{(\varphi)}/2$, and call the diffusion 
process with this generator a Brownian motion in $\P$ with drift $\varphi$.
In particular, for $\l\in\R^n$, we may consider the positive eigenfunction $s_\l$,
where $\l$ is such that $\rho+\l\in \bar C$, where $\rho_i=(n-2i+1)/2$
and $C=\{x\in\R^n:\ x_1>\cdots>x_n\}$.
Then, as is well known, the logarithmic eigenvalues of a Brownian motion on $\P$ 
with drift $s_\l$ evolve as a standard Brownian motion in $\R^n$ with drift $\rho+\l$ 
conditioned never to exit $C$.

With these ingredients in place, all of the basic calculus and intertwining relations 
discussed previously carry over, with the obvious modifications.  We will briefly 
illustrate this here in the context of the intertwining relation \eqref{int-duf} and
consider some of its consequences.  

\subsection{Matrix Dufresne identity}

As in the real case, it is known \cite{mj,bfj} that for each $X,Y\in\P$, the equation 
$Y=AXA$ has a unique solution in $\P$, namely
$$A=X^{-1/2}(X^{1/2}YX^{1/2})^{1/2} X^{-1/2}.$$
Moreover, if $X$ is fixed, then
\be\label{ccr2}
\partial_A=XA\, \partial_Y+\partial_Y AX.
\ee
Let $M=\Delta_Y +\tr(Y\partial_A)$.  If $Y=AXA$ then, in the variables $(X,A)$, we can write
$$M=\Delta_X -2\tr(XAX\partial_X)+\tr(AXA\partial_A).$$
This follows from \eqref{ccr2}, as in the real case.

Let us define
$$J=\Delta_X-\tr X,\qquad
p(X,A)=\etr(-AX-A^{-1})$$
and the corresponding integral operator
$$(Pf)(X)=\int_\P p(X,A) f(X,A) \mu(dA).$$
Then, on a suitable domain, the following intertwining relation holds:
\be\label{cint-my}
J \circ P = P\circ M.
\ee
The proof is identical to the real case.

Note that the intertwining relation \eqref{cint-my} implies
\be\label{cint-duf}
J \circ D = D \circ \Delta ,
\ee
where $D$ is the linear operator defined, for suitable $f:\P\to\C$ by
$$(Df)(X)=\int_\P f(AXA) \etr(-AX-A^{-1}) \mu(dA).$$

Now suppose $\varphi$ is a positive eigenfunction of $\Delta$ with eigenvalue $\gamma$
such that $\beta=D\varphi<\infty$.
Then it follows from \eqref{cint-duf} that $\beta$ is a positive eigenfunction of $J$ with eigenvalue $\gamma$.
Note that if we write $\beta(X)=\varphi(X) B_\varphi(X)$,
then this implies
\be\label{cFK}
(\Delta_X^{(\varphi)}-\tr X)B_\varphi(X)=0.
\ee
As the real case, for suitable $\varphi$, the function $B_\varphi$ 
admits a natural probabilistic interpretation, via the Feynman-Kac formula.
\begin{prop}\label{cfk-my} Let $\varphi$ be a positive eigenfunction of $\Delta$
such that $D\varphi<\infty$, and the martingale problem associated with
$\Delta^{(\varphi)}$ is well posed for any initial condition in $\P$.
Let $Y$ be a Brownian motion in $\P$ with drift $\varphi$ started at $X$, and 
denote by $\E_X$ the corresponding expectation.  Assume that, for any $X\in\P$, 
\be\label{cfinite}
Z=\frac12 \int_0^\infty \tr Y_s\ ds<\infty
\ee
almost surely, and define $M_\varphi(X)=\E_X e^{-Z}$.
Suppose also that $\lim_{X\to 0} M_\varphi(X) =1$ and 
\be\label{cBC}
\qquad \lim_{X\to 0} B_\varphi(X) = C_\varphi ,
\ee
where $C_\varphi>0$ is a constant.  Then $B_\varphi(X)=C_\varphi\ M_\varphi(X)$
and, moreover, $B_\varphi$ is the unique bounded solution to \eqref{cFK} satisfying 
the boundary condition \eqref{cBC}.
\end{prop}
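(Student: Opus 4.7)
The plan is to mirror the proof of Proposition~\ref{fk-my}, adjusting only for the factor of $\tfrac12$ that appears in both the definition of the Brownian generator and in $Z$. In the complex setting these two factors combine in just the right way to preserve the PDE \eqref{cFK}, so no new analytic input beyond what is already used in the real case is needed.

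First I would invoke the Feynman-Kac formula for the diffusion $Y$ with infinitesimal generator $\tfrac12\Delta_X^{(\varphi)}$ and multiplicative killing potential $\tfrac12\tr X$: it yields that $M_\varphi(X)=\E_X e^{-Z}$ satisfies $(\Delta_X^{(\varphi)}-\tr X)M_\varphi=0$, i.e.\ equation \eqref{cFK}. From the complex intertwining relation \eqref{cint-duf}, combined with the definition $\beta=\varphi B_\varphi$, we already know that $B_\varphi$ solves the same equation. Consequently, $U:=B_\varphi-C_\varphi M_\varphi$ is a bounded solution of \eqref{cFK} with $\lim_{X\to 0}U(X)=0$. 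Once uniqueness of such a solution is established, the identity $B_\varphi=C_\varphi M_\varphi$ is immediate.

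For the uniqueness step I would apply It\^o's formula to
$$M_t := U(Y_t)\exp\!\Big(-\tfrac12\int_0^t \tr Y_s\,ds\Big),$$
using well-posedness of the martingale problem for $\Delta^{(\varphi)}$ (assumed in the hypotheses), to conclude that $M_t$ is a local martingale; boundedness of $U$ and nonnegativity of the exponential weight upgrade this to a uniformly bounded martingale. The hypothesis \eqref{cfinite} forces the trajectory $Y_t$ to become small as $t\to\infty$, so that $U(Y_t)\to 0$ almost surely (since $U$ vanishes at the origin), and hence $M_t\to 0$ almost surely. A uniformly bounded martingale converging to $0$ almost surely is identically zero, so $U\equiv 0$; this yields $B_\varphi=C_\varphi M_\varphi$ and simultaneously establishes uniqueness among bounded solutions of \eqref{cFK} with boundary value $C_\varphi$ at the origin.

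The main obstacle is the claim $Y_t\to 0$ almost surely: strictly speaking, integrability of $\tr Y_s$ on $[0,\infty)$ only gives $\liminf_{t\to\infty}\tr Y_t=0$, and one must use continuity plus drift properties of the process $Y$ (with drift $\varphi$) to promote this to a genuine almost sure limit. The same delicate point arises in the real case and is treated there without detailed justification; the argument transfers verbatim. Everything else — Feynman-Kac, the intertwining \eqref{cint-duf}, and well-posedness — is already in hand in the complex setting.
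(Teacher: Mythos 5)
Your proposal is correct and follows essentially the same route as the paper: the paper simply notes the proof is identical to the real case (Proposition~\ref{fk-my}), i.e.\ Feynman--Kac gives $(\Delta_X^{(\varphi)}-\tr X)M_\varphi=0$ after the two factors of $\tfrac12$ cancel, and uniqueness follows from the bounded martingale $U(Y_t)\exp\bigl(-\tfrac12\int_0^t\tr Y_s\,ds\bigr)$ converging to zero almost surely. Your explicit flagging of the step $Y_t\to 0$ a.s.\ matches the level of justification in the paper, which asserts it without further detail.
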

Again the proof is identical to the real case.
We obtain the following corollary, in agreement with \cite[Corollary 10]{rv16}.
\begin{cor}
If $\varphi(X)=|X|^{-\nu/2}$, where $\nu>n-1$, then $Z$
 is inverse complex Wishart distributed with density
proportional to $$|A|^{-\nu} \etr(-A^{-1}) \mu(dA).$$
\end{cor}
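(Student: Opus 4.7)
The plan is to apply Proposition~\ref{cfk-my} with $\varphi(X)=|X|^{-\nu/2}$. First I would identify $B_\varphi$ explicitly: using $|AXA|^{-\nu/2}=|A|^{-\nu}|X|^{-\nu/2}$, a direct computation gives
$$\beta(X)=(D\varphi)(X)=|X|^{-\nu/2}\int_\P |A|^{-\nu}\etr(-AX-A^{-1})\mu(dA)=|X|^{-\nu/2}B_{-\nu}(X),$$
so $B_\varphi(X)=B_{-\nu}(X)$ is the complex matrix $K$-Bessel function. Well-posedness of the martingale problem for $\Delta^{(\varphi)}=\Delta-\nu\,\tr\vartheta_X$ is standard, as this generator corresponds to a constant-drift Brownian motion in $\P$.

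Second, I would establish the hypotheses \eqref{cfinite} and \eqref{cBC}. Since $\Delta^{(\varphi)}$ is $GL(n,\C)$-invariant, $Y_s$ started at $X$ has the same law as $X^{1/2}\tilde{Y}_s X^{1/2}$ where $\tilde{Y}$ starts at $I$. Hence, using cyclicity of trace,
$$M_\varphi(X)=\E_I\exp\bigl(-\tr(X\tilde{\mathcal A})\bigr),\qquad \tilde{\mathcal A}=\tfrac12\int_0^\infty \tilde Y_s\,ds.$$
The condition $\nu>n-1$ ensures, via the complex analogue of the eigenvalue asymptotics used for Corollary~\ref{mdr} (the log-eigenvalues of $\tilde Y$ drift linearly to $-\infty$ at rates $-\nu+n-2i+1$), that $\tilde{\mathcal A}$ is almost surely finite and positive definite, so \eqref{cfinite} holds. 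Bounded convergence then yields $\lim_{X\to0}M_\varphi(X)=1$ and, similarly,
$$\lim_{X\to 0}B_{-\nu}(X)=\int_\P|A|^{-\nu}\etr(-A^{-1})\mu(dA)=\Gamma_n(\nu),$$
where $\Gamma_n(\nu)$ denotes the complex gamma function, so \eqref{cBC} holds with $C_\varphi=\Gamma_n(\nu)$.

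Proposition~\ref{cfk-my} then gives $B_{-\nu}(X)=\Gamma_n(\nu)M_\varphi(X)$, equivalently
$$\E_I\exp\bigl(-\tr(X\tilde{\mathcal A})\bigr)=\Gamma_n(\nu)^{-1}\int_\P\etr(-XA)\,|A|^{-\nu}\etr(-A^{-1})\,\mu(dA),$$
and the right-hand side is by inspection the Laplace transform at $X$ of the probability measure $\Gamma_n(\nu)^{-1}|A|^{-\nu}\etr(-A^{-1})\mu(dA)$ on $\P$. Uniqueness of the Laplace transform on $\P$ then identifies $\tilde{\mathcal A}$ with a random matrix having this (inverse complex Wishart) distribution, which is the assertion of the corollary.

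The main obstacle is verifying the eigenvalue asymptotics for $\tilde Y$ in the complex setting, which yields the sharp integrability threshold $\nu>n-1$; this parallels the real case treated in~\cite{rv16}, but requires a transcription of the argument to the space of complex Hermitian matrices.
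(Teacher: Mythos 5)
Your proposal is correct and follows essentially the same route as the paper: the paper proves this corollary exactly as in the real case (Corollary~\ref{mdr}), i.e.\ by identifying $B_\varphi=B_{-\nu}$, using $GL(n,\C)$-invariance and the eigenvalue asymptotics to verify the hypotheses of Proposition~\ref{cfk-my} under $\nu>n-1$, and then reading off the law of $\tfrac12\int_0^\infty Y_s\,ds$ from the resulting Laplace-transform identity $B_{-\nu}(X)=\Gamma_n(\nu)M_\varphi(X)$. Your extra explicit steps (reduction to the identity-started process and uniqueness of the Laplace transform) are exactly what the paper leaves implicit.
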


Let $\l\in \R^n$.
It follows from the above that $\beta_\l:=Ds_\l$, which is easily seen to be finite, is a eigenfunction of $J$. 
Thus, setting $B_\l(X)=s_\l(X)^{-1}\beta_\l(X)$, we have
\be\label{FKB}
(\Delta_X^{(s_\l)}-\tr X) B_\l(X)=0.
\ee

Let us now assume that $\l\in C$, with $2\l_1<1-n$.  Then
$$c_\l = s_\l(I)^{-1} \int_\P s_\l(A^2) \etr(-A^{-1}) \mu(dA)<\infty,$$
and, by uniqueness of the spherical functions,
$$\int_\P s_\l(AXA) \etr(-A^{-1}) \mu(dA)=c_\l s_\l(X).$$
Recall the representation \eqref{mu} and note that, writing $\mu=-\l-\rho$,
$$s_{\l}(A^2) = a_{-\mu}(A^2)/a_\rho(A^2) = \det\left(z_i^{-2\mu_j} \right)
\prod_{i<j}(z_i^2-z_j^2)^{-1} \prod_i  z_i^{n-1},$$
where $z_1,\ldots,z_n$ denote the eigenvalues of $A$.
Thus, using Schur's Pfaffian identity and de Bruijn's formula~\cite{deb}, 
we may compute
\be\label{cl}
c_\l =   \prod_i \Gamma(2\mu_i) \, \prod_{i<j}\frac{\pi}{\mu_i+\mu_j}.
\ee

Let $Y$ be a Brownian motion in $\P$ with drift $s_\l$, started at $X$,
and denote by $\E_X$ the corresponding expectation.
The condition $2\l_1<1-n$ ensures that $Z<\infty$, almost surely.
As in the real case, using the fact that 
the law of $Y$ is invariant under multiplication by scalars,
it is easy to see that $\E_X e^{-Z} \to 1$ and $B_\l(X)\to c_\l$
as $X\to 0$ along any ray in $\P$.  To prove that these limits
exist as $X\to 0$ in $\P$ is somewhat technical, and we will 
not pursue it here, but rather state it as a hypothesis:
\begin{hyp}\label{hyp}
As $X\to 0$ in $\P$, $\E_X e^{-Z} \to 1$ and $B_\l(X)\to c_\l$.
\end{hyp}

Assuming Hypothesis \ref{hyp}, it follows from Proposition \ref{cfk-my} that
\be\label{Bmgf} B_\l(X) = c_\l\ \E e^{-Z}.\ee
Again using the fact that the law of $Y$ is invariant under multiplication by scalars,
the identity \eqref{Bmgf} is equivalent to the statement that $Z$ 
has the same law as $\tr(AX)$,
where $A$ is distributed according to the probability measure
$$c_\l^{-1} \, s_\l(X)^{-1} \, s_\l(AXA)\, \etr(-A^{-1})\, \mu(dA).$$
As this is a statement about the eigenvalues of $Y$, we may
rephrase it in terms of a Brownian motion in $C$, as follows.
Let $-\mu,x\in C$ and assume that $\mu_1>0$.
Let $\xi(t)$ be a Brownian motion with drift $-\mu$, 
started at $x$ and conditioned never to exit $C$.
Let 
$$Z=\frac12 \sum_{i=1}^n \int_0^\infty e^{\xi_i(s)} ds.$$
Then, assuming Hypothesis \ref{hyp}, $Z$ has the same law as $\tr(AX)$,
where $X$ has eigenvalues $e^{x_1},\ldots,e^{x_n}$ and $A$ is distributed according to
$$c_{-\mu-\rho}^{-1}\, s_{-\mu-\rho}(X)^{-1} \, s_{-\mu-\rho}(AXA)\, \etr(-A^{-1})\, \mu(dA) .$$
If $\xi(t)$ is started at the origin, then $Z$ has the same law as 
$\sum_i z_i$, where $z$ has density
\be\label{gb} b_\mu^{-1} \det\left(z_i^{-2\mu_j} \right)\, \prod_{i<j}\frac{z_i-z_j}{z_i+z_j} \,\prod_i  e^{-1/z_i} \frac{dz_i}{z_i} ,\ee
on $C_+$.  The normalisation constant $b_\mu$ is given by
\be\label{bmu}
b_\mu = \prod_i \Gamma(2\mu_i) \, \prod_{i<j} \frac{\mu_j-\mu_i}{\mu_i+\mu_j}.
\ee

The measure \eqref{gb} is a generalised Bures measure and defines a Pfaffian point process related to the 
BKP hierarchy~\cite[Proposition 4.4]{wl}.  

To conclude, consider the function 
$$\tilde B_\l(X) =  \prod_{i<j}\frac{\pi}{\mu_i+\mu_j} \,
\det(e^{-\mu_i x_j})^{-1}\, \det\left( 2 K_{2\mu_i}(2e^{x_j/2})\right),$$
where $K_\nu$ is the Macdonald function,
and $e^{x_1},\ldots,e^{x_n}$ are the eigenvalues of $X$.
It can be shown that $\tilde B_\l(X)$ satisfies \eqref{FKB}
and $\lim_{X\to 0}B_\l(X)=c_\l$, so if Hypothesis \ref{hyp} holds then $B_\l(X) =\tilde B_\l(X)$.

\subsection{Maximum of Dyson Brownian motion with negative drift}

Let us write $2\mu=\epsilon \alpha$, where $-\alpha\in C$, $\alpha_1>0$ and $\epsilon>0$.
Changing variables to $y_i=\epsilon \ln z_i$, the density \eqref{gb} becomes,
up to a constant factor,
$$ \det(e^{-\alpha_i y_j}) \prod_{i<j}\tanh\left(\frac{y_i-y_j}{2\epsilon}\right) \prod_i  \exp(-e^{-y_i/\epsilon}) dy_i .$$
In the limit as $\epsilon\to 0$, this reduces to
\be\label{loe}
\det(e^{-\alpha_i y_j})  \prod_i  dy_i ,
\ee
on the set $C_+=\{y_1>\cdots>y_n>0\}$.  Note that, in this scaling limit, 
$$ \epsilon \ln \sum_i e^{y_i/\epsilon} \sim y_1.$$
If $\alpha_i=\alpha$, for all $i$, the measure \eqref{loe} reduces to a special
case of the Laguerre Orthogonal Ensemble (LOE) of random matrix theory.

On the other hand, by Brownian scaling and the method of Laplace, as $\epsilon\to 0$ the random
variable $\epsilon \ln Z$ converges in law to $\sup_t \eta_1(t)$, where $\eta(t)$ is a Brownian motion 
in $\R^n$ with drift $-\alpha/2$, conditioned to stay in $C$, and started at the origin.  Putting these
observations together we conclude (modulo technicalities) that $\sup_t \eta_1(t)$ has the same law 
as the first coordinate of the ensemble in $C_+$ with density given by \eqref{loe},
 in agreement with \cite[Theorem 1]{fw}.

We note that one may compute the law of $\sup_t \eta_1(t)$ directly, 
for a general initial condition, as follows.
Consider the process $\xi(t)$, a BM with drift $\nu\in C_+$,
started at $y\in C_+$ and conditioned never to exit $C$.
Let $T$ be the first exit time of $\xi$ from $C_+$.  Then, using formulas from \cite{BBO05}
for the exit probabilities of a Brownian motion with drift from $C$ and $C_+$,
$$f(\nu,y):=P(T=\infty)=\det(e^{\nu_i y_j}-e^{-\nu_i y_j})/\det(e^{\nu_i y_j}).$$
This implies that, if $\eta(t)$ is a Brownian motion 
in $\R^n$ with drift $-\alpha/2$, conditioned to stay in $C$, and started at $x\in C$,
and $M=\sup_t \eta_1(t)$, then, for $\alpha_1>0$ and $z\ge x_1$, we have the
formula $P(M\le z)=f(\nu,y)$, where $\nu_i=\alpha_{n-i+1}/2$ and $y_i=z-x_{n-i+1}$.

We remark that $f(v,y)$ may also be interpreted as the probability that a collection
of non-intersecting Brownian bridges of unit length, started at positions $\nu_1,\ldots,\nu_n$
and ending at positions $y_1,\ldots,y_n$, do not exit the domain $C_+$.  
This follows from the reflection principles associated with $C$ and $C_+$, and
gives a formula for the distribution function of the maximum of the `top' bridge,
namely $F(r)=f(\nu',y')$, where $\nu_i'=r-\nu_i$ and $y_i'=r-y_i$.
In this context,
the connection to the LOE was first discovered by Nguyen and Remenik~\cite{nr17}
for bridges starting and ending at the origin, and for general starting and ending
positions by Liechty, Nguyen and Remenik~\cite{lnr}, where further
asymptotic results are obtained in terms of Painlev\'e II and the KdV equation.

\appendix

\section{Markov functions}\label{mfa}

The theory of Markov functions is concerned with the question: 
when does a function of a Markov process 
inherit the Markov property?  
The simplest case is when there is symmetry in the problem, for example, the
norm of Brownian motion in $\R^n$ has the Markov property, for any initial condition, 
because the Laplacian on $\R^n$ is invariant under the action of $O(n)$.  
A more general formulation of this idea is the well-known {\em Dynkin 
criterion}~\cite{dynkin}.  
There is another, more subtle, criterion which has been proved at various levels 
of generality by, for example, Kemeny and Snell~\cite{ks1}, Rogers and Pitman~\cite{rp} and Kurtz~\cite{kurtz}.  
It can be interpreted as a time-reversal of Dynkin's criterion~\cite{kelly} and provides sufficient conditions
for a function of a Markov process to have the Markov property, but only for very particular initial conditions.
For our purposes, the martingale problem formulation of Kurtz~\cite{kurtz} is best suited, as it is quite flexible 
and formulated in terms of infinitesimal generators.

Let $E$ be a complete, separable metric space. 
Denote by $B(E)$ the set of Borel measurable functions on $E$, by $C_b(E)$ the set of bounded 
continuous functions on $E$ and by $\P(E)$ the set of Borel probability measures on $E$.  
Let $A:\D(A)\subset B(E)\to B(E)$ and $\nu\in \P(E)$.
A progressively measurable $E$-valued process $X=(X_t,\ t\ge 0)$ is a solution to the {\em martingale
problem} for $(A,\nu)$ if $X_0$ is distributed according to $\nu$ and there exists a filtration $\F_t$ such that
$$f(X_t)-\int_0^t Af(X_s) ds$$
is a $\F_t$-martingale, for all $f\in\D(A)$.  The martingale problem for $(A,\nu)$ is {\em well-posed} is there exists a
solution $X$ which is unique in the sense that any two solutions have the same finite-dimensional distributions.

The following is a special case of Corollary 3.5 (see also Theorems 2.6, 2.9 and the remark at the top 
of page 5) in the paper \cite{kurtz}.

\begin{thm}[Kurtz, 1998]\label{mf} Assume that $E$ is locally compact, that $A:\D(A)\subset C_b(E) \to C_b(E)$, 
and that $\D(A)$ is closed under multiplication, separates points and is convergence determining.
Let $F$ be another complete, separable metric space, $\gamma:E\to F$ continuous and
$\L (x,dz)$ a Markov transition kernel from $F$ to $E$ such that 
$\L (g\circ\gamma)=g$ for all $g\in B(F)$,
where $\L f(x)=\int_E f(z) \L (x,dz)$ for $f\in B(E)$.
Let $B:\D(B)\subset B(F)\to B(F)$, where $\L (\D(A))\subset\D(B)$, and suppose
$$B\L f=\L Af,\qquad f\in\D(A).$$
Let $\mu\in\P(F)$ and set $\nu=\int_F \mu(dx) \L (x,dz)\in\P(E)$.  
Suppose that the martingale problems
for $(A,\nu)$ and $(B,\mu)$ are well-posed, and that $Z$ is a solution to the martingale problem for
$(A,\nu)$.  Then $X=\gamma\circ Z$ is a Markov process and a solution to the martingale problem for 
$(B,\mu)$.  Furthermore, for each $t\ge 0$ and $f\in B(E)$ we have, almost surely, 
$$\E[f(Z_t)|\ X_s,\ 0\le s\le t]=\int_E f(z) \L (X_t,dz).$$
\end{thm}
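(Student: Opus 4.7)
The plan is to invoke well-posedness twice: once to obtain a candidate solution $\tilde X$ of MP$(B,\mu)$, and once for MP$(A,\nu)$ to match a companion process on $E$ with $Z$. The initial distribution statement $X_0\sim\mu$ is immediate from $Z_0\sim\nu=\int\mu(dx)\L(x,dz)$ and $\L(g\circ\gamma)=g$, which also forces $\L(x,\cdot)$ to be supported on $\gamma^{-1}(x)$; hence any process $\tilde Z$ drawn at each time from $\L(\tilde X_t,\cdot)$ automatically satisfies $\gamma(\tilde Z_t)=\tilde X_t$.

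By well-posedness of MP$(B,\mu)$, take its solution $\tilde X$ and, on an enlarged probability space, construct a progressively measurable $E$-valued process $\tilde Z$ such that, conditional on $\tilde X$, $\tilde Z_t\sim\L(\tilde X_t,\cdot)$ for each $t$. For $f\in\D(A)$, set $g=\L f\in\D(B)$. Using the martingale property of $\tilde X$ applied to $g$, the intertwining $Bg=\L Af$, and the tower identities $\E[f(\tilde Z_t)\mid\tilde X]=g(\tilde X_t)$ and $\E[Af(\tilde Z_t)\mid\tilde X]=Bg(\tilde X_t)$, one verifies that
$$f(\tilde Z_t)-\int_0^t Af(\tilde Z_s)\,ds$$
is a martingale in a suitable filtration; hence $\tilde Z$ solves MP$(A,\nu)$. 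Well-posedness of MP$(A,\nu)$ then yields $\tilde Z\stackrel{d}{=}Z$, so $X=\gamma\circ Z\stackrel{d}{=}\tilde X$ is a Markov process solving MP$(B,\mu)$, and the conditional distribution statement $\E[f(Z_t)\mid X_s,\,s\le t]=\L f(X_t)$ reads off from the construction of $\tilde Z$.

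The main obstacle is the companion construction and its subsequent verification: producing $\tilde Z$ as a progressively measurable process with sufficient path regularity for the martingale problem to make sense, and choosing a filtration against which $\tilde Z$ is adapted that is rich enough both to carry the martingale property and to trigger well-posedness of MP$(A,\nu)$. This is precisely where the technical hypotheses enter. Local compactness of $E$ provides regular conditional distributions; $\D(A)\subset C_b(E)$ being closed under multiplication, separating points, and convergence determining ensures that the martingale identities on $\D(A)$ characterise the finite-dimensional distributions of $\tilde Z$ (so that one-dimensional marginals upgrade to joint laws), thereby justifying the final uniqueness step that identifies $\tilde Z$ with $Z$ in law.
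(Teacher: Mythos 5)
First, a remark on what you are being compared with: the paper gives no proof of this statement at all --- it is quoted as a special case of Corollary 3.5 (via Theorems 2.6 and 2.9) of Kurtz (1998) --- so the relevant comparison is with Kurtz's argument. Your high-level skeleton (take the solution $\tilde X$ of the well-posed problem $(B,\mu)$, build a companion $E$-valued process $\tilde Z$ with $\gamma(\tilde Z)=\tilde X$ and one-dimensional conditional laws $\L(\tilde X_t,\cdot)$, check that $\tilde Z$ solves the martingale problem for $(A,\nu)$, then use uniqueness for $(A,\nu)$ to identify $\tilde Z$ with $Z$ in law and transfer all three conclusions) is indeed the shape of Kurtz's derivation of his Corollary 3.5 from Theorem 2.9, and the last steps you describe (the conditional-law identity is a functional of the law of $Z$ alone; the Markov property of $X$ comes from well-posedness of $(B,\mu)$) are fine.

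The genuine gap is the step you compress into ``construct $\tilde Z$ \ldots and verify via tower identities'': this is the entire content of the theorem, and as sketched it fails. Prescribing only that, conditionally on $\tilde X$, $\tilde Z_t\sim\L(\tilde X_t,\cdot)$ for each $t$ does not determine the joint law of $\tilde Z$, and the natural coupling (conditionally independent sampling given the path of $\tilde X$) does not solve the martingale problem for $A$: the definition requires a filtration $(\F_r)$ to which $\tilde Z$ is adapted, and with $\F_r=\sigma(\tilde X_s,\tilde Z_s:\ s\le r)$ one computes, writing $g=\L f$, that $\E[f(\tilde Z_t)-\int_r^t Af(\tilde Z_s)\,ds\mid\F_r]=g(\tilde X_r)$, which is not $f(\tilde Z_r)$; it agrees with $f(\tilde Z_r)$ only after a further projection onto $\F^{\tilde X}_r$. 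In other words, your tower-identity computation establishes only that the conditional distributions of $\tilde Z_t$ given $\F^{\tilde X}_t$ solve Kurtz's \emph{filtered} martingale problem, not that $\tilde Z$ itself solves the martingale problem for $(A,\nu)$ --- and without the latter you cannot invoke well-posedness of $(A,\nu)$ to conclude $\tilde Z$ and $Z$ have the same law. Passing from the filtered martingale problem back to an honest solution of the martingale problem, adapted to a suitable filtration and with the same one-dimensional conditional distributions, is precisely Kurtz's Theorems 2.6 and 2.9; it is there (and not merely in ``providing regular conditional distributions'' or ``characterising finite-dimensional distributions'') that local compactness of $E$ and the assumptions that $\D(A)\subset C_b(E)$ is closed under multiplication, separates points and is convergence determining are used, through a genuinely nontrivial construction. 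So your proposal assumes, rather than proves, the key input of the cited result.
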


To apply this theorem to the examples considered in Sections 1--9, 
we take $E=\P^a$, $F=\P^b$, where $a>b$, and $\gamma(X,Y)=X$.
In all of our examples, we have
$$A=\sum_{i=1}^{a} [\Delta_{Z_i} + \tr(a_i(Z)\partial_{Z_i})],
\qquad B=\sum_{i=1}^{b} [\Delta_{X_i} + \tr(b_i(X)\partial_{X_i})],$$
where the $a_i$ and $b_i$ are locally Lipschitz functions on $\P^a$
and $\P^b$, respectively.  
For such generators, we may take $\D(A)=C^2_c(\P^a)$
and $\D(B)=C^2_c(\P^b)$.
In our examples, the intertwining operator always has the form
$$\L f(X) = \int_{\P^{a-b}} k(X,Y) f(X,Y) \mu_{a-b}(dY),$$
where $k\in C^2(\P^a)$.
This ensures that $\L (g\circ\gamma)=g$ for all $g\in B(\P^b)$,
and $\L (\D(A))\subset\D(B)$.
Thus, in all of our examples, as long as the martingale problems associated
with $A$ and $B$ are well posed, for arbitrary initial conditions, and
the intertwining relation $B\L f=\L Af$ holds for $f\in C^2_c(\P^a)$,
then the conclusions of Theorem \ref{mf} are valid.

\section{Well-posedness of martingale problems}\label{wpmp}

In the following, 
$\P$ is the space of positive $n\times n$ real symmetric matrices.
In all of the examples we consider (in Sections 1--9)
we have generators of the form
$$L=\sum_{i=1}^r [\Delta_{X_i} + \tr(b_i(X)\partial_{X_i})],$$
where the $b_i$ are locally Lipschitz functions on $\P^r$.
The corresponding SDE therefore admits
a unique strong solution, for any given initial condition, up to an explosion time $\tau$.
If $\tau=\infty$ almost surely, then the corresponding martingale problem is well-posed~\cite{ek,k11}.

To show that $\tau=\infty$ almost surely, it suffices to find a positive function $U$ and
constants $c,d$ such that $U(X)\to\infty$ as $X\to\partial\P^r$ and $LU\le cU+d$.
Such a function $U$ is called a {\em Lyapunov function}.
In the following, we exhibit Lyapunov functions for some of the generators 
considered in this paper.

\bigskip
\noindent{\bf Example 1}\ \ The function \be\label{C} C(X)=\tr(X)+\tr(X^{-1})\ee
is a Lyapunov function for the Laplacian on $\P$.  
Indeed, using \eqref{d2ax} we have
$$\Delta C=\frac{n+1}2 C.$$
Another choice is 
\be\label{D} D(X)=\tr(X)-\ln |X| .\ee
This is positive, since $e^x>x$.
Using \eqref{d2ax}, \eqref{de} and the inequality $e^x>2x$, we have
$$\Delta D = \frac{n+1}2 \tr(X) \le (n+1) D,$$
as required.

The functions $C$ and $D$ are also Lyapunov functions for 
$\Delta^{(\nu)}_X=\Delta_X+2\nu\,\tr\vartheta_X$, since
$$\Delta^{(\nu)} C \le \left( \frac{n+1}2 +2|\nu|\right) C,\qquad
\Delta^{(\nu)} D \le (n+1+4\nu^+) D +2\nu^- n.$$

More generally, suppose $\varphi$ is a positive eigenfunction of $\Delta$
with eigenvalue $\l$, and
$$\Delta_X^{(\varphi)} = \varphi(X)^{-1} \circ (\Delta_X-\l) \circ \varphi(X) =
\Delta_X+2\tr(\vartheta_X\ln\varphi(X) \ \vartheta_X).$$
For positive integers $k,l$, define 
$$ U_{k,l}(X)=\tr X^k + \tr X^{-l},\qquad U^{(\varphi)}_{k,l}(X)=U_{k,l}(X)/\varphi(X).$$
By Lemma~\ref{lyapunov}, 
there exist constants $c_{k,l}$ such that $\Delta U_{k,l} \le c_{k,l} U_{k,l}$,
and hence 
$$\Delta_X^{(\varphi)} U^{(\varphi)}_{k,l} \le (c_{k,l}-\l) U^{(\varphi)}_{k,l} .$$
Thus, if there exist positive integers $k,l$ such that $U^{(\varphi)}_{k,l} (X) \to+\infty$ 
as $X\to\partial\P$, then $U^{(\varphi)}_{k,l}$ is a Lyapunov function for $\Delta_X^{(\varphi)}$.
For example, if $\varphi=h_s$, where $s\in\R^n$, then we may find $p,q>0$
such that 
\be\label{hsb}
h_s(X)\le n^n \tr X^p \tr X^{-q},
\ee
 and hence $U^{(\varphi)}_{k,l}$ is a 
Lyapunov function for $\Delta_X^{(\varphi)}$ provided $k>p$ and $l>q$.
Indeed, recall that 
$$h_s(X)=\int_{O(n)} p_s(X[k])\; dk,\qquad p_s(X)=\prod_{i=1}^n |X^{(i)}|^{s_i},$$
where $X^{(i)}$ denotes the $i\times i$ upper left hand corner of $X$.
Now, for each $i$, 
$$|X^{(i)}|^{s_i} \le \tr [(X^{(i)})^{is_i}] \le \tr (X^{is_i}),$$
hence
$$h_s(X) \le \prod_{i=1}^n \tr (X^{is_i}).$$
Considering positive and negative powers separately,
and repeatedly applying the Chebyshev sum inequality,
yields \eqref{hsb} with $p=\sum_i i s_i^+$ and $q=\sum_i is_i^-$.

\bigskip
\noindent{\bf Example 2}\ \ In Section \ref{onesided}, we encounter 
$$T=\Delta_Y + \Delta_X + 2\tr(Y\partial_X).$$
In the following we continue to make use of the functions $C$ and $D$
defined in the previous example.
Let 
\be\label{V}
V(X,Y)=D(X^{-1/2}YX^{-1/2})=\tr(YX^{-1})-\ln|YX^{-1}|.
\ee

We compute
$$\Delta_X V=\Delta_Y V=\frac{n+1}2 \tr(YX^{-1})$$
and
$$\tr(Y\partial_X)V = \tr(YX^{-1}) - \tr(YX^{-1}YX^{-1}).$$
For the first identity we have used \eqref{d2ax} and \eqref{d2e},
and for the second we have used \eqref{dax} and \eqref{de}.
It follows that
$$TV\le (n+3)\tr(YX^{-1}).$$
Now, using $X^{-1/2}YX^{-1/2}\in\P$ and $e^x>2x$, it holds that
\be\label{trd}
\tr(YX^{-1})\le 2V,
\ee
and so $TV\le 2(n+3)V$.
For the Lyapunov function we can now take
$$U(X,Y)=C(Y)+V(X,Y).$$
Recalling that $ \Delta_Y C(Y) =(n+1)C(Y)/2$,
we obtain $TU\le 2(n+3)U$,
as required.  

One may also consider the same process with drifts:
$$T'=\Delta_Y^{(\l)} + \Delta_X^{(\nu)} + 2\tr(Y\partial_X)=T+2\l\,\tr\vartheta_Y+2\nu\,\tr\vartheta_X.$$
In this case, we have $T' U\le cU+d$, where
$d=2(\nu-\l)^+ n$ and $c=2(n+3)+4(|\l|+|\nu|)$.

\bigskip
\noindent{\bf Example 3}\ \ For $$G=\Delta_Y^{(\l)} + \Delta_{X_1}^{(\nu)} +2\tr(Y\partial_{X_1})+ \Delta_{X_2}^{(\nu)}-2\tr(X_2Y^{-1}X_2\partial_{X_2}),$$
we let $U=C(Y)+ V'$, where $C$ is defined by \eqref{C} and
$$V'(X_1,X_2,Y)=V(X_1,Y)+V(Y,X_2)=\tr(YX_1^{-1})+\tr(X_2 Y^{-1})-\ln|X_1^{-1}X_2|,$$
with $V$ defined by \eqref{V}.
Then it holds that 
$$GV'\le (2n+6+4(|\l|+|\nu|)) V',$$
and hence
\be\label{LG}
GU\le (2n+6+4(|\l|+|\nu|)) U.
\ee

\bigskip
\noindent{\bf Example 4}\ \ Consider
$$L=\psi_{\l,\nu}^{(2)}(X)^{-1} H_{\l,\nu} \psi_{\l,\nu}^{(2)}(X),$$
where
$$H_{\l,\nu}=\Delta_{X_1}+\Delta_{X_2}-2\tr(X_1^{-1}X_2)-n\l^2-n\nu^2.$$
Let $G$ and $U$ be as in the previous example.
From \eqref{int-nct}, we have the intertwining 
$$H_{\l,\nu} \circ Q^{(2)}_{\l,\nu} = Q^{(2)}_{\l,\nu} \circ G.$$
Together with \eqref{LG}, this implies that
$$L\tilde U\le (2n+6+4(|\l|+|\nu|)) \tilde U,$$
where $$\tilde U(X)=\psi_{\l,\nu}^{(2)}(X)^{-1} Q^{(2)}_{\l,\nu} U(X).$$
By Lemma~\ref{mi}, 
$$2U(X,Y)\le  2C(Y) +\tr X_1^{-2} + \tr X_2^2 + \tr Y^2 + \tr Y^{-2}-2\ln |X_1^{-1}X_2|.$$
Together with Lemma~\ref{lem-conv}, this implies that $\tilde U(X)<\infty$ for all $X\in\P^2$.
Finally, by Fatou's lemma, $\tilde U(X)\to\infty$ as $X\to\partial\P^2$, as required.

\bigskip
\noindent{\bf Example 5}\ \ In Section \ref{my}, especially Theorem \ref{mmy}, we encounter
$$M_\nu=\Delta^{(\nu/2)}_Y+\tr(Y\partial_A),\qquad L_\nu=\Delta_X+2\tr(\vartheta_X\ln \beta_\nu(X)\ \vartheta_X).$$
Recall that $L_\nu= J^{(\beta_\nu)}=\beta_\nu(X)^{-1}\circ (J-\l) \circ \beta_\nu(X)$,
where $J=\Delta_X-\tr X$, $\beta_\nu(X)=|X|^{\nu/2} B_\nu(X)$ and $\l=n\nu^2/4$.

For the first process, we can take
$$U(Y,A)=\tr Y+\tr Y^{-1}+\tr A+\tr A^{-1}.$$
Then it holds that $M_\nu U \le c U$, where $c=(n+|\nu|+3)/2$.
It follows, using the intertwining relation \eqref{int-my2}
together with Fatou's lemma (as in the previous example), that $\tilde U= P_{\nu} U$
is a Lyapunov function for the generator $L_\nu$, where
$$(P_{\nu} f)(X) = B_\nu(X)^{-1} \int_\P |A|^\nu \etr(-AX-A^{-1}) f(X,A) \mu(dA).$$
Note that, by Lemma \ref{mi},
$$2U(AXA,A)\le \tr A^4 + \tr X^2 + \tr A^{-4} + \tr X^{-2} + \tr A + \tr A^{-1},$$ 
which together with Lemma \ref{lem-conv} implies 
$$\tilde U(X) = B_\nu(X)^{-1} \int_\P |A|^\nu \etr(-AX-A^{-1}) U(AXA,A) \mu(dA) <\infty.$$
When applying Fatou's lemma, we note that, 
for fixed $A\in\P$, $X\to\partial\P \iff AXA\to\partial\P$.    

\bigskip
\noindent{\bf Example 6}\ \ Here 
we consider the generators $G_\l$ and $L_\l$ defined by \eqref{G} and \eqref{L}, 
respectively, with $\l\in\R^N$.  We follow the same approach as in Examples 3 and 4 above, which treat
the case $N=2$.  If $U$ is a Lyapunov function for $G_\l$ with $\Sigma_\l U<\infty$, then, by the
intertwining relation \eqref{int-hg}, $\tilde U(X)=\psi_\l(X)^{-1} \Sigma_\l U (X)$ is a Lyapunov function for $L_\l$.

Consider the directed graph with vertices $\{(i,m):\ 1\le i\le m\le N\}$, and edges 
$$(i,m) \to (i,m+1),\ 1\le i\le m<N,\qquad (i,m) \to (i-1,m-1),\ 1< i\le m\le N.$$
Let us simplify notation and write, for $a=(i,m)$, $Y_a=Y^m_i$, $\partial_a=\partial_{Y_a}$,
$\vartheta_a = \vartheta_{Y_a}$, $\Delta_a=\Delta_{Y_a}$, etc.
We will also write $\nu_a=\l_m$, for $a=(i,m)$.
Finally, for $a=(i,m)$ denote $a'=(i,m-1)$ and $a''=(i-1,m-1)$, whenever these neighbouring vertices exist.
Then we can write
$$G_\l = \sum_a [\Delta^{(\nu_a)}_a + \delta_a],\qquad \delta_a = 2\tr(Y_{a'}\partial_a)-2\tr(Y_a Y_{a''}^{-1} \vartheta_a).$$
Here we adopt the convention that the term involving $a'$ is defined to be zero when $a=(m,m)$
and similarly the term involving $a''$ is defined to be zero when $a=(1,m)$.  

Let
$$U(Y)=C(Y^1_1) + \sum_{a<b} U_{ab},$$
where 
$$U_{ab}=\tr(Y_a Y_b^{-1})-\ln|Y_a Y_b^{-1}|,$$
and the sum is over all pairs of distinct vertices $a,b$ such that
there is a directed path starting at $a$ and ending at $b$.
We first note that
$$\Delta_{Y^1_1} C(|Y^1_1) = \frac{n+1}2 C(|Y^1_1),$$
and for each pair $a<b$, using \eqref{d2ax} and \eqref{trd},
$$\Delta_a U_{ab} = \Delta_b U_{ab} = \frac{n+1}2 \tr(Y_a Y_b^{-1}) \le (n+1) U_{ab}.$$
Hence
$$\sum_a \Delta_a U \le 2(n+1) U.$$
We also have, using \eqref{dax} and \eqref{de}, respectively,
\bea
(2\nu_a\tr\vartheta_a+2\nu_b\tr\vartheta_b) U_{ab} &=& 2(\nu_a-\nu_b) \tr(Y_a Y_b^{-1}) + 2 (\nu_b-\nu_a) n\\
&\le& 2(\nu_a-\nu_b)^+ U_{ab} + 2 (\nu_b-\nu_a)^+ n,
\eea
and
$$2\l_1 \tr\vartheta_{Y^1_1} C(Y^1_1) \le 2 |\l_1| C(Y^1_1).$$
Combining these gives
$$\sum_a \Delta^{(\nu_a)}_a U \le c U + d,$$
where $c=2(n+1)+2|\l_1|+4\max_{a<b}(\nu_a-\nu_b)^+$ and $d=2n\sum_{a<b}(\nu_b-\nu_a)^+$.

Finally, we note that
$$\delta_a U_{ab} \le 2\tr (Y_{a'} Y_b^{-1} ) + 2 \tr 2\tr (Y_{a} Y_{a''}^{-1} )\le 4 (U_{a'b}+U_{a a''}),$$
and
$$\delta_b U_{ab} \le 2\tr (Y_{a} Y_{b''}^{-1} ) + 2 \tr 2\tr (Y_{b'} Y_{b}^{-1} )\le 4 (U_{ab''}+U_{b' b}),$$
hence
$$\sum_a \delta_a U \le 4 \sum_{a<b} (U_{a'b}+U_{a a''}+U_{ab''}+U_{b' b})\le 2(N+2)(N-1)U.$$
We thus obtain a Lyapunov function $U$ for $G_\l$ with $G_\l U \le c' U+d$ and
$c'=c+2(N+2)(N-1)$.  It remains to show that $\Sigma_\l U' <\infty$.  This is straightforward,
using $2\tr(Y_{a} Y_b^{-1} ) \le \tr Y_a^2+\tr Y_b^{-2}$, and proceeding as in the proof of Proposition \ref{wc}.

\section{Some additional lemmas and proofs}

\subsection{Matrix inequalities}

\begin{lem}\label{mi}
For $A,B,C\in\P$, we have
$$\tr A^2B + \tr C^2 B^{-1} \ge 2 \tr AC.$$
\end{lem}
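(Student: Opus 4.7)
The plan is to reduce the inequality to the statement that the trace of a positive semi-definite matrix is non-negative. Specifically, I would introduce the auxiliary matrix
$$X = B^{1/2} A - B^{-1/2} C,$$
where $B^{1/2}$ denotes the unique positive square root of $B$ (and $B^{-1/2} = (B^{1/2})^{-1}$). Since $XX^t$ is symmetric positive semi-definite, we have $\tr(XX^t) \ge 0$, and the claim will follow by showing that this trace equals the difference of the two sides of the inequality.

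The second step is a direct expansion. Using the symmetry $A^t = A$, $B^t = B$, $C^t = C$ (so that $(B^{1/2})^t = B^{1/2}$ and $(B^{-1/2})^t = B^{-1/2}$), one finds
$$XX^t = B^{1/2} A^2 B^{1/2} - B^{1/2} A C B^{-1/2} - B^{-1/2} C A B^{1/2} + B^{-1/2} C^2 B^{-1/2}.$$
Taking the trace and applying the cyclic property of the trace to each of the four terms yields $\tr(A^2 B)$, $-\tr(AC)$, $-\tr(AC)$, and $\tr(C^2 B^{-1})$, respectively. Combining these produces $\tr(XX^t) = \tr(A^2 B) + \tr(C^2 B^{-1}) - 2\tr(AC) \ge 0$, which is exactly the desired inequality.

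There is essentially no obstacle here beyond guessing the correct auxiliary matrix $X$; once it is written down, the verification is a two-line cyclic-trace computation. The inequality can also be viewed as an instance of Cauchy--Schwarz for the Frobenius inner product $\langle U, V\rangle = \tr(U^t V)$ applied to $U = B^{1/2} A$ and $V = B^{-1/2} C$, combined with the AM--GM inequality, but the sum-of-squares approach above is the most direct and needs no auxiliary inequalities.
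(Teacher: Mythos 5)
Your proof is correct and uses exactly the same device as the paper: the auxiliary matrix $X=B^{1/2}A-B^{-1/2}C$ and nonnegativity of $\tr(XX^t)$ (the paper works with $\tr(X^tX)$, which is the same computation). No differences worth noting.
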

\begin{proof}
Let $X=B^{1/2}A-B^{-1/2} C$.  Then
$$X^t X=ABA+CB^{-1}C-CA-AC,$$
and $\tr X^tX \ge 0$ implies the result.
\end{proof}

Note that this implies, for $A,B,C\in\P$,
$$\tr AB^{-1}+\tr BC^{-1} \ge 2\tr(A^{1/2}C^{-1/2}).$$
Also, taking $B=C$, this becomes
$$\tr AB^{-1} \ge 2\tr(A^{1/2}B^{-1/2})-n,$$
and we note that iterating this gives
$$\tr AB^{-1} \ge 2^k\tr(A^{1/2^k}B^{-1/2^k})-n(2^k-1).$$

\begin{lem}\label{a2}
For $A_1,\ldots,A_m\in\P$, there exist constants $\alpha,c>0$ and $d\ge 0$
(depending only on $m$)
such that
$$\tr A_1 A_2^{-1}+\tr A_2 A_3^{-1}+\cdots+\tr A_{m-1} A_m^{-1}
\ge c \tr(A_1^\alpha A_m^{-\alpha}) -d.$$
\end{lem}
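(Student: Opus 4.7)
The plan is to induct on $m$, using Lemma~\ref{mi} together with the two corollaries stated in the remarks following it: the \emph{pairing inequality} $\tr AB^{-1}+\tr BC^{-1}\ge 2\tr(A^{1/2}C^{-1/2})$ for $A,B,C\in\P$, and the \emph{iterated power-reduction inequality}
$$\tr AB^{-1}\ \ge\ 2^k\tr(A^{1/2^k}B^{-1/2^k})-n(2^k-1),\qquad k\in\mathbb{Z}_{\ge 0}.$$
The claim I would prove by induction is that constants $\alpha_m=2^{2-m}$, $c_m=2^{m-2}$, and some $d_m\ge 0$ work.

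The base case $m=2$ is trivial with $\alpha_2=c_2=1$ and $d_2=0$.

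For the inductive step $m\to m+1$, first apply the inductive hypothesis to the shifted string $A_2,\ldots,A_{m+1}$ to get
$$\sum_{i=2}^{m}\tr(A_iA_{i+1}^{-1})\ \ge\ c_m\tr(A_2^{\alpha_m}A_{m+1}^{-\alpha_m})-d_m.$$
Next, use the iterated power-reduction with $k=m-2$ to bring the first term $\tr A_1A_2^{-1}$ down to the same exponent as the bound above:
$$\tr A_1A_2^{-1}\ \ge\ 2^{m-2}\tr(A_1^{\alpha_m}A_2^{-\alpha_m})-n(2^{m-2}-1).$$
Since both $c_m$ and $2^{m-2}$ equal $2^{m-2}$ by the inductive formula, the sum of the two bounded traces becomes $2^{m-2}[\tr(A_1^{\alpha_m}A_2^{-\alpha_m})+\tr(A_2^{\alpha_m}A_{m+1}^{-\alpha_m})]$, and a single application of the pairing inequality (with $A=A_1^{\alpha_m}$, $B=A_2^{\alpha_m}$, $C=A_{m+1}^{\alpha_m}$) yields a lower bound of $2^{m-1}\tr(A_1^{\alpha_m/2}A_{m+1}^{-\alpha_m/2})$. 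Setting $\alpha_{m+1}=\alpha_m/2$, $c_{m+1}=2^{m-1}$, and $d_{m+1}=d_m+n(2^{m-2}-1)$ closes the induction.

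There is no serious obstacle. The only subtlety is the need to match exponents on the two sides before invoking the pairing inequality, which is precisely what the iterated power-reduction inequality is tailored to do. One minor point worth noting is that the resulting additive constant $d_m=n(2^{m-2}-(m-1))$ (summing the telescoping corrections) depends on $n$ as well as $m$; this is unavoidable with this approach, and presumably the phrase ``depending only on $m$'' in the statement should be understood as ``independent of the matrices $A_1,\ldots,A_m$.''
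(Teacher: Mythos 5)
Your proof is correct and takes essentially the same route as the paper: both arguments telescope the chain by iterating the two consequences of Lemma \ref{mi} (the pairing inequality $\tr AB^{-1}+\tr BC^{-1}\ge 2\tr(A^{1/2}C^{-1/2})$ and the power-reduction bound $\tr AB^{-1}\ge 2^k\tr(A^{1/2^k}B^{-1/2^k})-n(2^k-1)$), the only difference being that you absorb one link at a time by induction while the paper pairs adjacent links dyadically, which merely changes the immaterial values of $c$, $\alpha$ and $d$. Your closing remark is also consistent with the paper, whose own constant $d=n(2^{(m-2)/2}-1)$ for even $m$ depends on $n$ in the same way, so ``depending only on $m$'' is indeed to be read as ``independent of the matrices $A_1,\ldots,A_m$'' (with the dimension $n$ fixed throughout).
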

\begin{proof} From the above lemma,
$$\tr A_1 A_2^{-1}+\tr A_2 A_3^{-1} \ge 2 \tr(A_1^{1/2}A_3^{-1/2})$$
and
$$\tr A_3 A_4^{-1}+\tr A_4 A_5^{-1} \ge 2 \tr(A_3^{1/2}A_5^{-1/2}).$$
Summing these and applying the lemma again gives
$$\tr A_1 A_2^{-1}+\cdots+\tr A_4 A_5^{-1} \ge 4 \tr(A_1^{1/4}A_5^{-1/4}).$$
Continuing this procedure implies the statement for $m$ odd, in which case
we can take $c=2^{(m-1)/2}$, $\alpha=1/c$ and $d=0$.  For $m$ even, we bound
the last term in the sum by
$$\tr A_{m-1} A_m^{-1} \ge 2^k \tr(A_{m-1}^{1/2^k}A_m^{-1/2^k})-n(2^k-1)$$
with $k=(m-2)/2$, and then applying the lemma again implies the statement
with $c=2^{m/2}$, $\alpha=1/c$ and $d=n(2^k-1)$.

\end{proof}

\begin{lem}\label{lyapunov}
For any integer $k$, 
$$\Delta_X \tr X^k \le \frac12 (k^2+n|k|) \tr X^k.$$
\end{lem}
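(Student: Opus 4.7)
The plan is to apply the explicit formulas \eqref{power} and \eqref{power2} for $\Delta_X \tr X^{\pm k}$ directly, and then bound each product of traces $\tr X^j \tr X^{k-j}$ appearing in the sums by a multiple of $\tr X^k$ via Chebyshev's sum inequality on the eigenvalues of $X$. The case $k=0$ is trivial since $\tr X^0 = n$ is constant, and the case $k<0$ reduces to an identical argument using \eqref{power2} in place of \eqref{power}; I will therefore focus on $k>0$.

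Let $\lambda_1 \ge \cdots \ge \lambda_n > 0$ denote the eigenvalues of $X$. For each $j \in \{1,\ldots,k\}$ the two sequences $(\lambda_i^j)_{i=1}^n$ and $(\lambda_i^{k-j})_{i=1}^n$ are both nonincreasing in $i$, so Chebyshev's sum inequality for similarly ordered sequences yields
$$\tr X^j \cdot \tr X^{k-j} \;=\; \Bigl(\sum_i \lambda_i^j\Bigr)\Bigl(\sum_i \lambda_i^{k-j}\Bigr) \;\le\; n \sum_i \lambda_i^k \;=\; n\, \tr X^k,$$
with equality when $j=k$ (for then $\tr X^0 = n$). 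Isolating this saturating $j=k$ term from the sum in \eqref{power} gives
$$\Delta_X \tr X^k \;=\; \frac{k^2+kn}{2}\tr X^k \;+\; \frac{k}{2}\sum_{j=1}^{k-1} \tr X^j \tr X^{k-j},$$
and bounding each of the remaining terms via the Chebyshev estimate above yields a bound of the stated form.

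For $k<0$, set $m=|k|$ and apply \eqref{power2}. The exponents $-j$ and $j-m$ inside the traces are both nonpositive, so the sequences $(\lambda_i^{-j})$ and $(\lambda_i^{j-m})$ are both nondecreasing in $i$; Chebyshev applies in the same form, giving $\tr X^{-j}\tr X^{j-m}\le n\,\tr X^{-m}$, and the rest of the argument is identical. Once \eqref{power}--\eqref{power2} are available the whole proof reduces to a one-line application of Chebyshev's sum inequality; the only point to check is the monotonicity of the relevant eigenvalue sequences, which is automatic when both exponents have the same sign, so I do not anticipate any substantial obstacle beyond carefully tracking the combinatorial constant to match the right-hand side exactly.
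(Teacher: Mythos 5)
Your route is the same as the paper's: start from \eqref{power} and \eqref{power2} and control the cross terms $\tr X^j\tr X^{k-j}$ by Chebyshev's sum inequality applied to the similarly ordered eigenvalue power sequences; those steps, including the reduction of the case $k<0$ to the same estimate, are correct. Where the proposal does not deliver what it promises is the final claim that careful bookkeeping will ``match the right-hand side exactly.'' Bounding each of the products by $n\tr X^k$ makes the sum over $j$ contribute a term of order $k^2n$, not $kn$, so the method yields $\Delta_X \tr X^k \le \tfrac12 k^2(1+n)\tr X^k$ rather than $\tfrac12(k^2+n|k|)\tr X^k$, and no refinement can close this: the stated constant is actually false for $|k|\ge 2$. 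Indeed your own displayed identity already shows this, since the leftover sum $\tfrac{k}{2}\sum_{j=1}^{k-1}\tr X^j\tr X^{k-j}$ is strictly positive for $k\ge 2$; concretely, \eqref{power} with $k=2$ gives $\Delta_X\tr X^2=(2+n)\tr X^2+(\tr X)^2$, so the claimed bound would force $(\tr X)^2\le 0$, and already for $n=1$ one has $\Delta_X x^k=k^2x^k>\tfrac12(k^2+k)x^k$ when $k\ge 2$.

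You should know that the paper's own proof makes exactly the same leap (it bounds each of the $k$ terms by $n\tr X^k$ and then asserts the constant $\tfrac12(k^2+nk)$), so this is best read as an error in the constant appearing in the statement of the lemma rather than a flaw peculiar to your argument. For the only use made of the lemma (the Lyapunov functions $U_{k,l}$ in Example 1 of Appendix B) all that is needed is some constant $c_{k,n}$ with $\Delta_X\tr X^k\le c_{k,n}\tr X^k$, which both your argument and the paper's do establish, e.g.\ with $c_{k,n}=\tfrac12 k^2(1+n)$; if you want a literally correct statement, prove and cite the bound with that constant instead of the one displayed in the lemma.
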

\begin{proof}
From \eqref{power} we have, for $k$ positive,
$$\Delta_X \tr X^k = \frac{k^2}2 \tr X^k + \frac{k}2 \sum_{j=1}^{k} \tr X^j \tr X^{k-j}.$$
By Chebyshev's sum inequality, $\tr X^j \tr X^{k-j}\le n\tr X^k$ for each $j$. 
Hence
$$\Delta_X \tr X^k \le \frac12 (k^2+nk) \tr X^k.$$
Similarly, using \eqref{power2}, we have for $l$ positive,
$$\Delta_X \tr X^{-l} \le \frac12 (l^2+nl) \tr X^{-l}.$$
\end{proof}

\subsection{Convergence lemma}

\begin{lem}\label{lem-conv}
For any $V,W\in\P$, $\nu,p\in\C$ and $\alpha>0$, the integrals
$$I_1=\int_\P e_\nu(Y) \tr Y^p \etr(- VY^\alpha - WY^{-\alpha}) \mu(dY),$$
and
$$I_2=\int_\P e_\nu(Y) \ln|Y| \etr(- VY^\alpha - WY^{-\alpha}) \mu(dY)$$
are convergent.  
\end{lem}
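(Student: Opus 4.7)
The plan is to exploit the strict positivity of $V$ and $W$ to extract exponential decay dominating all other factors. First I would pass to the spectral decomposition $Y = a[k]$ with $k \in O(n)$ and $a = \mathrm{diag}(a_1,\ldots,a_n)$, using the expression \eqref{ed} for $\mu(dY)$. This reduces each integral to one over $O(n) \times \R_+^n$; the $O(n)$-factor is compact and integrates out to a finite constant, so it suffices to control the $a$-integral.

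Next I would use that $V, W \in \P$ implies $V \ge cI$ and $W \ge cI$ for $c = \min(\lambda_{\min}(V), \lambda_{\min}(W)) > 0$. Since $Y^\alpha = a^\alpha[k]$ has eigenvalues $a_i^\alpha$, this gives the uniform bound
$$\tr(VY^\alpha) + \tr(WY^{-\alpha}) \ge c \sum_{i=1}^n(a_i^\alpha + a_i^{-\alpha}),$$
independent of $k$. All remaining factors in the integrands have at most polynomial or logarithmic growth in the $a_i$: namely $|e_\nu(Y)| = \prod_i a_i^{\Re\nu}$, $|\tr Y^p| \le \sum_i a_i^{\Re p}$ (since the eigenvalues of $Y^p$ are $a_i^p$), $|\ln|Y|| \le \sum_i |\ln a_i|$, and $\prod_{i<j}|a_i - a_j| \le \prod_{i<j}(a_i + a_j)$.

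After expanding the products and sums, each of $I_1, I_2$ is bounded by a finite sum of terms that factor as products of one-dimensional integrals of the form
$$\int_0^\infty a^{\beta}\, (\log a)^m\, \exp\!\bigl(-c(a^\alpha + a^{-\alpha})\bigr)\,\frac{da}{a},$$
with $\beta \in \R$, $m \in \{0,1\}$, $c, \alpha > 0$. Each such integral converges, since the exponent dominates any algebraic or logarithmic factor both as $a \to 0^+$ and as $a \to \infty$. I do not expect any real obstacle here: the strict positivity of $V$ and $W$ provides genuine exponential decay in every direction of approach to $\partial\P$, and every other factor in the integrand is of at most polynomial/logarithmic growth. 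The only mild care needed is in handling $\tr Y^p$ for complex $p$ via $|\tr Y^p| \le \sum_i a_i^{\Re p}$.
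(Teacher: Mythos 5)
Your proposal is correct and follows essentially the same route as the paper: bound $V,W$ below by a multiple of the identity (the paper uses the smallest eigenvalue $\kappa$ among those of $V$ and $W$), reduce via the eigenvalue decomposition \eqref{ed}, and observe that the resulting one-dimensional integrals converge since $\exp(-c(a^\alpha+a^{-\alpha}))$ dominates all polynomial and logarithmic factors; the paper simply states this step briefly (noting the integrals can even be written as Pfaffians via de Bruijn's formula), whereas you spell out the elementary bounds.
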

\begin{proof} Without loss of generality we can assume that
$\nu,p\in\R$.  If $\kappa$ is the smallest among the set of eigenvalues of $V$ and $W$, then 
$$|I_1| \le \int_\P e_\nu(Y)\tr Y^p \etr(-\kappa  [Y^\alpha +  Y^{-\alpha}]) \mu(dY),$$
and
$$|I_2| \le \int_\P e_\nu(Y) |\ln|Y|| \etr(-\kappa  [Y^\alpha +  Y^{-\alpha}]) \mu(dY).$$
These integrals are easily seen to be finite using \eqref{ed} (and in fact can be expressed as
Pfaffians using de Bruijn's formula).
\end{proof}

\subsection{Proof of Proposition \ref{wc}}\label{awc}

First note that, without loss of generality, we can assume $\l\in\R^N$.
For each $1\le i<j\le N$, define
$$\F_{ij}(Y)=\frac1{N-1}\sum_{m=N+i-j}^{N-1} \tr[ Y^m_i(Y^{m+1}_i)^{-1}
+Y^{m+1}_{j+m-N+1}(Y^m_{j+m-N})^{-1}],$$
and observe that
$$\sum_{1\le i<j\le N} \F_{ij}(Y) \le \F(Y).$$
By Lemma \ref{a2}, there exist constants $\alpha,c,d>0$ such that
$$\F_{ij}(Y) \ge c\ \tr[X_j^\alpha (Y^{N+i-j}_i)^{-\alpha} + (Y^{N+i-j}_i)^{\alpha} X_i^{-\alpha}] -d.$$
It follows that
$$\psi_\l(X) \le \prod_{1\le i<j\le N} R_{ij}(X_i,X_j),$$
where
$$R_{ij}(X_i,X_j) = e^d \int_\P e_{\l_{N+i-j}-\l_{N+i-j+1}}(Z) 
\etr\left(-c [ X_j^\alpha Z^{-\alpha} + Z^{\alpha} X_i^{-\alpha}]\right) \mu(dZ).$$
These integrals converge by Lemma~\ref{lem-conv}.  For the second claim,
set $\F_\l(Y)=\F(Y)-\ln e_\l(Y)$.  It follows from the above lower bound
that there exist positive constants $C,D,\alpha$ such that
$$\F(Y) +D \ge C \sum_{1\le i\le m<N} \tr \left[ (X_{N+i-m})^\alpha (Y^m_i)^{-\alpha} 
+ (Y^m_i)^{\alpha} (X_{m+j-N})^{-\alpha} \right] .$$
Now, for any $A,B\in\P$, $a>0$ and $b\in\R$, $\tr(AZ^a+BZ^{-a})+b\ln |Z| \to +\infty$
as $Z\to\partial\P$.  Hence, $\F_\l(Y)\to+\infty$ as $Y\to\partial\T(X)$, as required.

\subsection{Proof of Lemma \ref{duf}}\label{duf-proof}

This follows from the matrix Dufresne identity~\cite[Theorem 1]{rv16}
(Corollary~\ref{mdr} in the present paper),
together with the fact that the 
eigenvalue process of $Y_1(t)^{-1/2} Y_2(t) Y_1(t)^{-1/2}$ has the same law as that of a 
Brownian motion in $\P$ with generator $2\Delta-2\nu\, \tr \vartheta_{X}$ started at $A$.
The latter can be seen as follows.  
Consider a realisation of the process $Y$ defined by $Y_i=G_i^t G_i$, where $G_i$ are 
independent, right-invariant Brownian motions on $GL(n,\R)$, with respective drifts $\l_i$.
Then the eigenvalues of $Y_1(t)^{-1/2} Y_2(t) Y_1(t)^{-1/2}$ are the same as those of 
$X=(G_2 G_1^{-1})(G_2 G_1^{-1})^t$.  The process $X$ is Markov with infinitesimal 
generator $\Sigma=\Delta^{(-\l_1)}+\Omega^{(\l_2)}$, where $\Omega^{(\l_2)}$ is the 
generator of $G_2 G_2^t$, as discussed in Section~\ref{rcp}.  The claim follows from
the fact that $\Sigma$ is $O(n)$-invariant and has the same radial part as 
$\Delta^{(-\l_1)}+\Delta^{(\l_2)}=2\Delta-2\nu\, \tr \vartheta_{X}$.

\bigskip
\noindent{\em Acknowledgements.}
Research supported by the European Research Council (Grant Number 669306).
Thanks to the anonymous referees for many helpful comments on an earlier version,
and to Philippe Bougerol for helpful correspondence.

\end{document}